\pgfplotsset{compat=1.17} 
\numberwithin{equation}{section} 
\numberwithin{figure}{section}
\theoremstyle{plain} 
\newtheorem{theorem}{Theorem}[section] 
\newtheorem{lemma}[theorem]{Lemma} 
\newtheorem{corollary}[theorem]{Corollary}
\theoremstyle{definition} 
\newtheorem*{definition}{Definition} 
\DeclareMathOperator{\mre}{Re} 
\DeclareMathOperator{\mim}{Im}  
\DeclareMathOperator{\sinc}{sinc} 
\DeclareMathOperator{\dist}{dist}
\DeclareMathOperator{\spa}{span} 
\DeclareMathOperator*{\essinf}{ess\,inf}
\begin{document} 
\title[Orthogonal decomposition of composition operators]{Orthogonal decomposition of composition operators on the $H^2$ space of Dirichlet series} 
\date{\today} 

\author{Ole Fredrik Brevig} 
\address{Department of Mathematics, University of Oslo, 0851 Oslo, Norway} 
\email{obrevig@math.uio.no}

\author{Karl-Mikael Perfekt} 
\address{Department of Mathematical Sciences, Norwegian University of Science and Technology (NTNU), NO-7491 Trondheim, Norway} 
\email{karl-mikael.perfekt@ntnu.no}
\begin{abstract}
	Let $\mathscr{H}^2$ denote the Hilbert space of Dirichlet series with square-summable coefficients. We study composition operators $\mathscr{C}_\varphi$ on $\mathscr{H}^2$ which are generated by symbols of the form $\varphi(s) = c_0s + \sum_{n\geq1} c_n n^{-s}$, in the case that $c_0 \geq 1$. If only a subset $\mathbb{P}$ of prime numbers features in the Dirichlet series of $\varphi$, then the operator $\mathscr{C}_\varphi$ admits an associated orthogonal decomposition. Under sparseness assumptions on $\mathbb{P}$ we use this to asymptotically estimate the approximation numbers of $\mathscr{C}_\varphi$. Furthermore, in the case that $\varphi$ is supported on a single prime number, we affirmatively settle the problem of describing the compactness of $\mathscr{C}_\varphi$ in terms of the ordinary Nevanlinna counting function. We give detailed applications of our results to affine symbols and to angle maps. 
\end{abstract}

\subjclass[2020]{Primary 47B33. Secondary 30B50, 30H10.}

\thanks{K.-M. Perfekt was partially supported by grant EP/S029486/1 of the UK Engineering and Physical Sciences Research Council (EPSRC)}

\maketitle

\section{Introduction} Let $\mathscr{H}^2$ be the Hilbert space of Dirichlet series $f(s)=\sum_{n\geq1} b_n n^{-s}$ with square-summable coefficients. For real numbers $\theta$, set $\mathbb{C}_\theta = \{s \in\mathbb{C} \,:\, \mre{s}>\theta\}$, and let $\varphi \colon \mathbb{C}_{1/2} \to \mathbb{C}_{1/2}$ be an analytic function. Gordon and Hedenmalm \cite{GH99} established that the composition operator $\mathscr{C}_\varphi f = f\circ \varphi$ defines a bounded composition operator on $\mathscr{H}^2$ if and only if $\varphi$ belongs to the Gordon--Hedenmalm class $\mathscr{G}$. 
\begin{definition}
	The Gordon--Hedenmalm class $\mathscr{G}$ consists of the analytic functions $\varphi\colon \mathbb{C}_{1/2}\to\mathbb{C}_{1/2}$ of the form
	\[\varphi(s) = c_0 s + \sum_{n=1}^\infty c_n n^{-s} = c_0s + \varphi_0(s),\]
	where $c_0$ is a non-negative integer and the Dirichlet series $\varphi_0$ converges uniformly in $\mathbb{C}_\varepsilon$ for every $\varepsilon>0$ and satisfies the following mapping properties: 
	\begin{enumerate}
		\item[(a)] If $c_0=0$, then $\varphi_0(\mathbb{C}_0)\subseteq \mathbb{C}_{1/2}$. 
		\item[(b)] If $c_0\geq1$, then either $\varphi_0(\mathbb{C}_0)\subseteq \mathbb{C}_0$ or $\varphi_0\equiv i \tau$ for some $\tau \in \mathbb{R}$. 
	\end{enumerate}
	We will use the notation $\mathscr{G}_0$ and $\mathscr{G}_{\geq1}$, respectively, for the subclasses (a) and (b). 
\end{definition}

Let be $T$ be a bounded operator on a Hilbert space. The $n$th approximation number $a_n(T)$ is the distance in the operator norm from $T$ to the operators of rank $<n$. Studying the decay of approximation numbers is relevant for compact operators $T$. Indeed, $T$ is compact if and only if $a_n(T)\to0$ as $n\to\infty$. 

Previously, precise results for the approximation numbers of composition operators on $\mathscr{H}^2$ have primarily been available for symbols $\varphi \in \mathscr{G}_0$, see \cite{BB17, BQS16, QS15}. For case (b) of the Gordon--Hedenmalm class, the following theorem, extracted from the proofs of \cite[Thm.~1.2]{BQS16} and \cite[Thm.~8.1]{BQS16}, gives the best known estimates for general $\varphi \in \mathscr{G}_{\geq1}$. Here, and throughout the paper, we define 
\begin{equation}\label{eq:vartheta} 
	\vartheta = \inf_{s \in \mathbb{C}_0} \mre{\varphi_0(s)} 
\end{equation}
for symbols $\varphi \in \mathscr{G}$. 
\begin{theorem}[Bayart--Queff\'{e}lec--Seip \cite{BQS16}] \label{thm:BQSest} 
	Suppose that $\varphi \in \mathscr{G}_{\geq1}$. Then 
	\begin{equation}\label{eq:BQSest} 
		p_n^{-\mre{c_1}} \leq a_n(\mathscr{C}_\varphi) \leq n^{-\vartheta} 
	\end{equation}
	where $p_n$ denotes the $n$th prime number. 
\end{theorem}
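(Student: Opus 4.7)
The plan is to exploit the factorization
\[\mathscr{C}_\varphi = \mathscr{C}_{\tilde\varphi}\circ T_\vartheta,\]
where $T_\vartheta f(s) := f(s+\vartheta)$ is translation by $\vartheta$ and $\tilde\varphi(s) := \varphi(s) - \vartheta$. The identity is immediate: $(\mathscr{C}_{\tilde\varphi}(T_\vartheta f))(s) = (T_\vartheta f)(\tilde\varphi(s)) = f(\tilde\varphi(s) + \vartheta) = f(\varphi(s))$. Since $\mre\tilde\varphi_0 = \mre\varphi_0 - \vartheta \geq 0$ on $\mathbb{C}_0$, one has $\tilde\varphi \in \mathscr{G}_{\geq 1}$, so the Gordon--Hedenmalm result \cite{GH99} gives $\|\mathscr{C}_{\tilde\varphi}\| \leq 1$. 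The translation acts diagonally, $T_\vartheta n^{-s} = n^{-\vartheta} n^{-s}$, with singular values $\{n^{-\vartheta}\}_{n\geq 1}$ already sorted in decreasing order, so $a_n(T_\vartheta) = n^{-\vartheta}$. The ideal property of approximation numbers then yields
\[a_n(\mathscr{C}_\varphi) \leq \|\mathscr{C}_{\tilde\varphi}\| \cdot a_n(T_\vartheta) \leq n^{-\vartheta}.\]

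\textbf{Lower bound.} I would apply the minimax characterization of approximation numbers to the $n$-dimensional subspace $V_n := \spa\{p_1^{-s}, \dots, p_n^{-s}\}$. A direct expansion gives
\[\mathscr{C}_\varphi p_j^{-s} = p_j^{-c_1} \bigl(p_j^{c_0}\bigr)^{-s} \cdot \exp\bigl(-(\varphi_0(s) - c_1)\log p_j\bigr),\]
and since $\varphi_0(s) - c_1 = \sum_{m \geq 2} c_m m^{-s}$ is supported on Dirichlet indices $m \geq 2$, this splits as $\mathscr{C}_\varphi p_j^{-s} = L_j + E_j$, where $L_j := p_j^{-c_1} (p_j^{c_0})^{-s}$ is the leading monomial and $E_j$ is a Dirichlet series supported on indices of the form $p_j^{c_0} M$ with $M \geq 2$. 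The family $\{L_j\}_{j=1}^n$ is orthogonal in $\mathscr{H}^2$ with $\|L_j\| = p_j^{-\mre c_1}$, and since $p_k^{c_0} \neq p_j^{c_0} M$ for any integer $M \geq 2$ whenever $p_j, p_k$ are distinct primes, one also has $L_k \perp E_j$ for all pairs $j, k$. Hence for $v = \sum_{j=1}^n v_j p_j^{-s} \in V_n$,
\[\|\mathscr{C}_\varphi v\|^2 = \Bigl\|\sum_{j=1}^n v_j L_j\Bigr\|^2 + \Bigl\|\sum_{j=1}^n v_j E_j\Bigr\|^2 \geq \sum_{j=1}^n |v_j|^2 p_j^{-2\mre c_1} \geq p_n^{-2\mre c_1} \|v\|^2.\]
The minimax principle then delivers $a_n(\mathscr{C}_\varphi) \geq p_n^{-\mre c_1}$.

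\textbf{Main obstacle.} Given the two inputs above---the Gordon--Hedenmalm contractivity of composition operators in $\mathscr{G}_{\geq 1}$ for the upper bound, and the multiplicative structure of Dirichlet indices for the lower bound---neither half is technically demanding. The conceptual crux lies on the upper side: one must recognize that the decay rate $n^{-\vartheta}$ is entirely explained by the translation operator $T_\vartheta$, so that once the contractive factorization $\mathscr{C}_\varphi = \mathscr{C}_{\tilde\varphi} \circ T_\vartheta$ is identified, the estimate follows immediately from a trivial diagonal computation.
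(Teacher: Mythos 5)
Your proposal is correct and follows essentially the same route as the paper: the upper bound uses the identical factorization $\mathscr{C}_\varphi = \mathscr{C}_{\varphi-\vartheta}\circ\mathscr{C}_{s\mapsto s+\vartheta}$ together with contractivity of $\mathscr{C}_{\varphi-\vartheta}$ and the explicit diagonal action of the shift, and the lower bound uses the max--min principle on $\spa\{e_{p_1},\ldots,e_{p_n}\}$ together with orthogonality of the leading terms $p_j^{-c_1}e_{p_j^{c_0}}$ to the error terms. Your orthogonal decomposition $\mathscr{C}_\varphi e_{p_j}=L_j+E_j$ is just a rephrasing of the paper's projection onto the auxiliary subspace $F=\spa\{e_{p_j^{c_0}}\}_{j\leq n}$ followed by Cauchy--Schwarz.
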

Since the proof of Theorem~\ref{thm:BQSest} is fairly short, we will present it in our preliminary section. Note that the asymptotic estimate $p_n \sim n\log{n}$ as $n\to\infty$ is a direct corollary of the prime number theorem.  

To give an example, suppose that $\varphi(s)=c_0 s + c_1$ for some $c_0\geq1$ and $c_1 \in \overline{\mathbb{C}_0}$. Then $\vartheta = \mre{c_1}$, and if $e_n(s) = n^{-s}$ for $n=1,2,3,\ldots$ denotes the standard basis of $\mathscr{H}^2$, then
\[\mathscr{C}_\varphi e_n = n^{-c_1} e_{n^{c_0}}.\]
Hence $a_n (\mathscr{C}_\varphi) = n^{-\mre{c_1}} = n^{-\vartheta}$ in this case, coinciding with the upper bound of \eqref{eq:BQSest}. Note that for all other symbols, where $\varphi_0(s) \not \equiv c_1$, the maximum principle implies that $\vartheta<\mre{c_1}$.

One of the main goals of the present paper is to improve on the estimates \eqref{eq:BQSest} for certain symbols $\varphi$. Specifically, we shall place restrictions on the prime numbers appearing in the Dirichlet series $\varphi_0$. Let $\mathbb{P}$ denote a set of prime numbers and set $\mathscr{M}(\mathbb{P}) = \{n\in\mathbb{N}\,:\, p|n \,\implies\, p \in \mathbb{P}\}$. We say that a Dirichlet series $f$ is \emph{supported} on $\mathbb{P}$ if
\[f(s) = \sum_{n\in \mathscr{M}(\mathbb{P})} b_n n^{-s}.\]
A set of of prime numbers $\mathbb{P}$ is called \emph{sparse} if $\sum_{p \in \mathbb{P}} p^{-1}<\infty$. Our first main result is the following improvement of the lower bound in Theorem~\ref{thm:BQSest}.
\begin{theorem}\label{thm:lowerimp} 
	Suppose that $\varphi \in \mathscr{G}_{\geq1}$. If $\varphi_0$ is supported on a sparse set of prime numbers, then for every $\varepsilon>0$ there is a positive constant $C=C(\varphi_0,\varepsilon)$ such that
	\[a_n(\mathscr{C}_\varphi) \geq C n^{-\vartheta-\varepsilon}.\]
\end{theorem}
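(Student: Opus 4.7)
The plan is to build an orthogonal decomposition of $\mathscr{H}^2$ indexed by integers coprime to the primes in $\mathbb{P}$, under which $\mathscr{C}_\varphi$ splits as a direct sum of operators, and then to estimate each summand from below by combining a Bohr-lift argument with counting.

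Every positive integer factors uniquely as $n = mk$ with $m \in \mathscr{M}(\mathbb{P})$ and $k \in \mathscr{M}(\mathbb{P}^c)$, yielding an orthogonal decomposition $\mathscr{H}^2 = \bigoplus_{k \in \mathscr{M}(\mathbb{P}^c)} H_k$ where $H_k = \overline{\spa}\{e_{mk} : m \in \mathscr{M}(\mathbb{P})\}$. Each $H_k$ is canonically isometric to $H_1 := \mathscr{H}^2(\mathbb{P})$ via $U_k(e_m) = e_{mk}$. Since $\varphi_0$ is supported on $\mathbb{P}$, so is every power of $\varphi_0$, and therefore $k^{-\varphi_0(s)} = \exp(-\log k \cdot \varphi_0(s))$ belongs to $H_1$ for each $k \geq 1$. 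A direct computation gives
\[\mathscr{C}_\varphi e_{mk} = e_{k^{c_0}} \bigl(m^{-\varphi(s)}\, k^{-\varphi_0(s)}\bigr),\]
with the bracketed factor in $H_1$, so $\mathscr{C}_\varphi$ sends $H_k$ into $H_{k^{c_0}}$. Under the identifications $U_k$ and $U_{k^{c_0}}$, the component $T_k := \mathscr{C}_\varphi|_{H_k}$ becomes the operator $M_{k^{-\varphi_0}} T$ on $H_1$, where $T = \mathscr{C}_\varphi|_{H_1}$ and $M_g$ is multiplication by $g$. Hence $\mathscr{C}_\varphi = \bigoplus_{k \in \mathscr{M}(\mathbb{P}^c)} T_k$. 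Because the singular values of a direct sum form the combined multiset of those of its summands, we have $a_n(\mathscr{C}_\varphi) \geq \delta$ whenever $\|T_k\| \geq \delta$ for at least $n$ distinct indices $k$. Using $T e_1 = 1$, we obtain
\[\|T_k\| \geq \|T_k e_1\|_{\mathscr{H}^2} = \|k^{-\varphi_0}\|_{\mathscr{H}^2}.\]

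The main obstacle is obtaining a good lower bound on $\|k^{-\varphi_0}\|_{\mathscr{H}^2}$. For this I would pass to the Bohr lift $F_0 \in H^\infty(\mathbb{T}^\infty)$ of $\varphi_0$; the key identity $\essinf_{\mathbb{T}^\infty} \mre F_0 = \vartheta$ is a standard consequence of the Bohr correspondence, relating the range of $\varphi_0$ on $\mathbb{C}_0$ to the essential range of $F_0$ on $\mathbb{T}^\infty$. Parseval on $\mathbb{T}^\infty$ yields
\[\|k^{-\varphi_0}\|_{\mathscr{H}^2}^2 = \int_{\mathbb{T}^\infty} k^{-2\mre F_0(z)}\,dm(z).\]
For any $\eta > 0$, the set $E_\eta = \{z : \mre F_0(z) < \vartheta + \eta\}$ has positive measure $\mu_\eta$, and restricting the integral to $E_\eta$ gives $\|k^{-\varphi_0}\|_{\mathscr{H}^2} \geq \sqrt{\mu_\eta}\, k^{-\vartheta - \eta}$.

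To conclude, sparseness of $\mathbb{P}$ yields $d := \prod_{p \in \mathbb{P}}(1 - 1/p) > 0$, and an elementary sieve estimate provides $\#\{k \in \mathscr{M}(\mathbb{P}^c) : k \leq X\} \geq (d/2) X$ for $X \geq X_0$. Given $n \geq n_0$, set $X = \lceil 2n/d \rceil$ so that at least $n$ integers $k \leq X$ in $\mathscr{M}(\mathbb{P}^c)$ are available; for each such $k$,
\[\|T_k\| \geq \sqrt{\mu_\eta}\, X^{-\vartheta - \eta} \geq C_\eta\, n^{-\vartheta - \eta}\]
for a constant $C_\eta > 0$. Hence $a_n(\mathscr{C}_\varphi) \geq C_\eta n^{-\vartheta - \eta}$ for large $n$, and adjusting $C_\eta$ absorbs the finitely many small values. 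Setting $\eta = \varepsilon$ delivers the claimed bound.
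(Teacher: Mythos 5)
Your proof is correct and follows essentially the same route as the paper's: orthogonal decomposition $\mathscr{C}_\varphi = \bigoplus_{k} \mathscr{C}_{\varphi,k}$, lower bound $\|\mathscr{C}_{\varphi,k}\| \geq \|\mathscr{C}_\varphi e_k\|_{\mathscr{H}^2}$, the vertical-limit/Bohr-lift formula $\|\mathscr{C}_\varphi e_k\|_{\mathscr{H}^2}^2 = \int_{\mathbb{T}^\infty} k^{-2\mre \varphi_0^\ast(\chi)}\,d\mu_\infty(\chi)$, positivity of the measure of $\{\mre \varphi_0^\ast < \vartheta + \varepsilon\}$, and the positive density of $\mathscr{M}(\mathbb{P}^\perp)$ coming from sparseness. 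The only cosmetic difference is that the paper records a slightly sharper intermediate statement (its Lemma~\ref{lem:sparse}, using that $j \mapsto \|\mathscr{C}_{\varphi,j}\|$ is decreasing to obtain $a_n(\mathscr{C}_\varphi) \geq \|\mathscr{C}_\varphi e_{mn}\|_{\mathscr{H}^2}$, which is reused later), whereas you pass directly to the counting argument for singular values of a direct sum; the two are equivalent for the purposes of this theorem.
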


Our proof of Theorem~\ref{thm:lowerimp} relies on an orthogonal decomposition of $\mathscr{C}_\varphi$ that is made available by the assumptions that $c_0 \geq 1$ and that $\varphi_0$ is supported on $\mathbb{P}$, see Lemma~\ref{lem:Csum}. Let $\mathbb{P}^\perp$ denote the set of prime numbers not in $\mathbb{P}$. To apply the orthogonal decomposition effectively, we require that $\mathbb{P}$ is sparse, so that the set $\mathscr{M}(\mathbb{P}^\perp)$ has positive density in $\mathbb{N}$.

We also have a more refined result. We say that a set of prime numbers $\mathbb{P}$ is $\nu$-\emph{sparse} for some $0<\nu\leq 1$ if $\sum_{p \in \mathbb{P}} p^{-\nu}<\infty$. In particular, a set of prime numbers is $1$-\emph{sparse} if and only if it is \emph{sparse}.
\begin{theorem}\label{thm:approxmain} 
	Consider a symbol $\varphi \in \mathscr{G}_{\geq1}$ and suppose that $\varphi_0$ is supported on $\mathbb{P}$. 
	\begin{enumerate}
		\item[(a)] If $\mathbb{P}$ is sparse, then there is a constant $C_1=C_1(\varphi_0)$ such that
		\[a_n(\mathscr{C}_\varphi)\geq C_1\|\mathscr{C}_\varphi e_n\|_{\mathscr{H}^2}.\]
		\item[(b)] If $\mathbb{P}$ is $\nu$-\emph{sparse} for some $0<\nu<1$ and $2\vartheta\geq\nu/(1-\nu)$, then there is a constant $C_2=C_2(\varphi_0, \nu)$ such that
		\[a_n(\mathscr{C}_\varphi)\leq C_2\|\mathscr{C}_\varphi e_n\|_{\mathscr{H}^2}.\]
	\end{enumerate}
\end{theorem}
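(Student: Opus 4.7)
The plan is to leverage the orthogonal decomposition $\mathscr{C}_\varphi \cong \bigoplus_{m\in \mathscr{M}(\mathbb{P}^\perp)} T_m$ from Lemma~\ref{lem:Csum}, where each $T_m\colon\mathscr{H}^2(\mathbb{P})\to\mathscr{H}^2(\mathbb{P})$ is the weighted composition operator $T_m g = m^{-\varphi_0(s)}\,g(\varphi(s))$. Under this identification the singular value sequence of $\mathscr{C}_\varphi$ is exactly the decreasing rearrangement of the multiset $\{s_k(T_m) : m\in \mathscr{M}(\mathbb{P}^\perp),\, k\geq 1\}$. A parallel fact, available via the Bohr lift, is that
\[
\|\mathscr{C}_\varphi e_n\|_{\mathscr{H}^2}^2 = \int_{\mathbb{T}^\infty} n^{-2\mre\varphi_0^{\mathrm{Bohr}}(z)}\, dz,
\]
so $n\mapsto \|\mathscr{C}_\varphi e_n\|$ is decreasing since $\mre\varphi_0\geq 0$. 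Both parts proceed by comparing $s_k(T_m)$ to $\|\mathscr{C}_\varphi e_{m j_k}\|$, where $j_1 < j_2 < \cdots$ enumerates $\mathscr{M}(\mathbb{P})$.

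For part~(a), noting that $T_m 1 = m^{-\varphi_0}$ supplies the immediate lower bound $s_1(T_m)\geq \|\mathscr{C}_\varphi e_m\|$. Sparseness of $\mathbb{P}$ endows $\mathscr{M}(\mathbb{P}^\perp)$ with a positive lower density $d>0$, so its $n$-th smallest element $m_n$ satisfies $m_n \leq n/d$. The decreasing rearrangement of the first singular values $\{s_1(T_m)\}_{m\in\mathscr{M}(\mathbb{P}^\perp)}$ alone therefore gives
\[
a_n(\mathscr{C}_\varphi)\geq \|\mathscr{C}_\varphi e_{m_n}\| \geq \|\mathscr{C}_\varphi e_{\lceil n/d\rceil}\|.
\]
It remains to check that for the fixed integer $N := \lceil 1/d\rceil$, the ratio $\|\mathscr{C}_\varphi e_{Nn}\|/\|\mathscr{C}_\varphi e_n\|$ is bounded below by a positive constant depending only on $\varphi_0$; this follows from a Laplace-transform analysis of the integral above, yielding $\|\mathscr{C}_\varphi e_{Nn}\|/\|\mathscr{C}_\varphi e_n\|\to N^{-\vartheta}$ as $n\to\infty$, while continuity handles the finitely many small $n$.

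For part~(b), the key estimate is
\[
s_k(T_m) \leq C(\varphi_0,\nu)\, \|\mathscr{C}_\varphi e_{m j_k}\| \qquad \text{for all } m\in\mathscr{M}(\mathbb{P}^\perp),\,k\geq 1.
\]
Granting this, the bijection $(m, j_k)\mapsto m j_k$ between $\mathscr{M}(\mathbb{P}^\perp)\times\mathscr{M}(\mathbb{P})$ and $\mathbb{N}$ shows that the multiset $\{s_k(T_m)\}_{m, k}$ is termwise dominated by $C \{\|\mathscr{C}_\varphi e_n\|\}_{n\geq 1}$; since the latter is monotone, rearranging decreasingly yields $a_n(\mathscr{C}_\varphi)\leq C\|\mathscr{C}_\varphi e_n\|$. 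To establish the estimate I would dominate $s_k(T_m)\leq \|T_m(I-P_{k-1})\|$, with $P_{k-1}$ the projection of $\mathscr{H}^2(\mathbb{P})$ onto $\mathrm{span}\{e_{j_1},\ldots,e_{j_{k-1}}\}$, and then evaluate this operator norm on the Bohr side via the tail sum $\sum_{j\in\mathscr{M}(\mathbb{P}),\, j\geq j_k}(mj)^{-2\mre\varphi_0(z)}$. The $\nu$-sparseness of $\mathbb{P}$ secures summability of the tail, while the inequality $2\vartheta\geq \nu/(1-\nu)$ is precisely what one needs to dominate it by a constant multiple of the leading term $(mj_k)^{-2\mre\varphi_0(z)}$ uniformly in $z\in\mathbb{T}^{\infty}$. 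Controlling this last bound---that is, taming the off-diagonal Dirichlet interactions of $T_m$ in the basis $\{e_{j_k}\}$, which is ``almost diagonal'' only at leading order because the principal Dirichlet coefficient of $T_m e_{j_k}$ sits at $e_{j_k^{c_0}}$---is the main technical obstacle, and the threshold $2\vartheta\geq \nu/(1-\nu)$ is calibrated precisely so that the estimate survives with constants independent of $m$ and $k$.
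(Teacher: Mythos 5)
Your reduction via the orthogonal decomposition (Lemma~\ref{lem:Csum}), the interpretation of the blocks as weighted composition operators $T_m$, and the idea of comparing singular values to the monotone sequence $\|\mathscr{C}_\varphi e_n\|$ all match the paper's strategy. For part~(a), the lower bound $s_1(T_m)\geq\|\mathscr{C}_\varphi e_m\|$, the density argument giving $m_n\leq Nn$, and the monotonicity of $n\mapsto\|\mathscr{C}_\varphi e_n\|$ are exactly what the paper uses (Lemma~\ref{lem:sparse}). The one loose end is the ``doubling'' step. You assert that a ``Laplace-transform analysis'' shows $\|\mathscr{C}_\varphi e_{Nn}\|/\|\mathscr{C}_\varphi e_n\|\to N^{-\vartheta}$. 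That limit does hold, but it needs a little argument (split the integral over $\mathbb{T}^\infty$ according to whether $\mre\varphi_0^\ast$ is within $\varepsilon$ of $\vartheta$); the paper's Lemma~\ref{lem:doubling} does exactly this and is shorter than a Tauberian-type statement. This part is essentially correct, just under-justified.

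Part~(b) contains a genuine gap. Your key claim is that, with $C=C(\varphi_0,\nu)$ \emph{independent of $m$ and $k$},
\[
s_k(T_m)\leq C\,\|\mathscr{C}_\varphi e_{m j_k}\|,
\]
which you propose to establish by showing that the tail
\[
\sum_{\substack{j\in\mathscr{M}(\mathbb{P})\\ j\geq j_k}} (mj)^{-2\sigma}
\]
is bounded by a constant times the leading term $(mj_k)^{-2\sigma}$, uniformly in $\sigma\geq\vartheta$. This is false in general. Even for $\mathbb{P}=\{p_1,p_2\}$, the number of elements of $\mathscr{M}(\mathbb{P})$ in $[K,2K]$ grows like $\log K$, so the tail is of order $K^{-2\sigma}\log K$, not $K^{-2\sigma}$. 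The correct bound (the paper's Lemmas~\ref{lem:abelsum} and~\ref{lem:upperest}) carries an unavoidable penalty factor, roughly $\zeta_\mathbb{P}(\nu)\,j_k^{\nu}$. Consequently the multiset $\{s_k(T_m)\}$ is \emph{not} termwise dominated by $C\|\mathscr{C}_\varphi e_{mj_k}\|$, and the ``sort and compare'' step collapses. What is actually required is an additional global counting argument to absorb the penalty: one introduces the profile function $\Phi(x)=\|\mathscr{C}_\varphi e_x\|$, notes that $\Phi^{-1}(xy)\leq y^{-1/\vartheta}\Phi^{-1}(x)$, and counts
\[
\big|\{n : a_n(\mathscr{C}_\varphi)\geq \sqrt{\zeta_\mathbb{P}(\nu)}\,x\}\big|\leq\sum_{m\geq1}\frac{\Phi^{-1}(x\,k_m^{-\nu/2})}{k_m},
\]
which converges precisely because $1-\nu/(2\vartheta)\geq\nu$, i.e.\ $2\vartheta\geq\nu/(1-\nu)$. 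So the hypothesis enters at the summability stage of the counting argument, not (as your write-up suggests) as a pointwise domination of the tail by its leading term uniformly in $\chi\in\mathbb{T}^\infty$. Your intuition that the obstruction is the Dirichlet basis being only ``almost diagonal'' is a red herring: the real obstruction is that the tail of the partial zeta function over $\mathscr{M}(\mathbb{P})$ is not comparable to its first term, and overcoming it requires the counting argument rather than a sharper tail estimate.
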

To exemplify the type of estimates which can be obtained from Theorem~\ref{thm:approxmain}, let $\mathbb{P}$ be a set of prime numbers and consider the affine symbol 
\begin{equation}\label{eq:affine} 
	\varphi(s) = c_0s+c_1+\sum_{p \in \mathbb{P}} c_p p^{-s}. 
\end{equation}
The approximation numbers of composition operators generated by affine symbols $\varphi \in \mathscr{G}_0$ have been investigated by Queff\'{e}lec and Seip \cite[Thm.~1.3]{QS15} and by Muthukumar, Ponnusamy, and Queff\'{e}lec \cite[Thm.~4.1]{MPQ18}. Using Theorem~\ref{thm:approxmain}, we shall obtain the following estimate for the approximation numbers of composition operators generated by affine symbols $\varphi \in\mathscr{G}_{\geq1}$.
\begin{corollary}\label{cor:affine} 
	Suppose that $\varphi$ is an affine symbol \eqref{eq:affine} with $c_0\geq1$, $|\mathbb{P}|=d < \infty$, $c_p\neq0$ and $\vartheta>0$. Then $\varphi\in\mathscr{G}$ and for $n\geq2$,
	\[a_n(\mathscr{C}_\varphi) \asymp n^{-\vartheta} (\log{n})^{-\frac{d}{4}}.\]
\end{corollary}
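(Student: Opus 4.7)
The plan is to derive the claimed asymptotic from Theorem~\ref{thm:approxmain} by computing $\|\mathscr{C}_\varphi e_n\|_{\mathscr{H}^2}$ explicitly. First, $\varphi\in\mathscr{G}_{\geq 1}$: the very definition of $\vartheta$ gives $\mre\varphi_0(s)\geq\vartheta>0$ on $\mathbb{C}_0$, so $\varphi_0(\mathbb{C}_0)\subseteq\mathbb{C}_0$. Since $|\mathbb{P}|=d<\infty$, the set $\mathbb{P}$ is $\nu$-sparse for every $\nu\in(0,1]$; choosing $\nu\in(0,1)$ with $\nu/(1-\nu)\leq 2\vartheta$ (possible since $\vartheta>0$), both parts of Theorem~\ref{thm:approxmain} apply and yield
\[a_n(\mathscr{C}_\varphi)\asymp\|\mathscr{C}_\varphi e_n\|_{\mathscr{H}^2}.\]

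The core task is to evaluate this norm. From
\[\mathscr{C}_\varphi e_n(s)=n^{-\varphi(s)}=n^{-c_1}\,n^{-c_0 s}\prod_{p\in\mathbb{P}}\exp\!\bigl(-c_p(\log n)\,p^{-s}\bigr),\]
two observations drive the computation: multiplication by the monomial $n^{-c_0 s}$ permutes an orthonormal subset of $\mathscr{H}^2$ and is therefore an isometry; and each factor $\exp(-c_p(\log n)p^{-s})$ expands as a Dirichlet series supported on powers of a single prime $p$, so by unique factorization the $\mathscr{H}^2$ norm is multiplicative across $\mathbb{P}$. Term-by-term expansion then identifies
\[\bigl\|\exp\!\bigl(-c_p(\log n)\,p^{-s}\bigr)\bigr\|_{\mathscr{H}^2}^{2}=\sum_{k=0}^{\infty}\frac{(|c_p|\log n)^{2k}}{(k!)^{2}}=I_{0}\bigl(2|c_p|\log n\bigr),\]
where $I_0$ is the modified Bessel function of the first kind, and the classical asymptotic $I_0(z)\sim e^{z}/\sqrt{2\pi z}$ as $z\to\infty$ yields
\[\|\mathscr{C}_\varphi e_n\|_{\mathscr{H}^2}\asymp n^{-\mre c_1+\sum_{p\in\mathbb{P}}|c_p|}(\log n)^{-d/4}.\]

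The last step is to identify $\vartheta=\mre c_1-\sum_{p\in\mathbb{P}}|c_p|$. The inequality $\vartheta\geq\mre c_1-\sum_p|c_p|$ follows from $|p^{-s}|\leq 1$ on $\overline{\mathbb{C}_0}$; for the reverse I would invoke Kronecker's simultaneous approximation theorem, which by the $\mathbb{Q}$-linear independence of $\{\log p\}_{p\in\mathbb{P}}$ lets me choose $t\in\mathbb{R}$ making each phase $-t\log p-\arg c_p$ arbitrarily close to $\pi\pmod{2\pi}$; letting $\mre s\downarrow 0$ concludes. Substituting this identity into the previous display and combining with the first yields $a_n(\mathscr{C}_\varphi)\asymp n^{-\vartheta}(\log n)^{-d/4}$. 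The main technical input is the Bessel-function identification of the prime-by-prime norm, together with its sharp asymptotics; the remaining steps are hypothesis verification and the Kronecker argument.
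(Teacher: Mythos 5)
Your proposal is correct, and it follows the same high-level strategy as the paper (reduce to estimating $\|\mathscr{C}_\varphi e_n\|_{\mathscr{H}^2}$ via both halves of Theorem~\ref{thm:approxmain}, verify the hypotheses, then compute the norm asymptotics), but the norm computation itself takes a different, Parseval-dual route. The paper first invokes Lemma~\ref{lem:UE} to rotate so that each $c_p$ becomes a positive real $\gamma_p$ and $c_1 = \vartheta + i\tau$, then evaluates $\int_{\mathbb{T}^\infty} n^{-2\mre\varphi_0^\ast(\chi)}\,d\mu_\infty$, which factors as $n^{-2\vartheta}\prod_{p\in\mathbb{P}} \int_{-\pi}^\pi n^{-2\gamma_p(1-\cos\theta_p)}\,\frac{d\theta_p}{2\pi}$, and estimates each factor via the self-contained Lemma~\ref{lem:hankel}. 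You instead work on the coefficient side: expanding $n^{-\varphi(s)}$ as a Dirichlet series, observing that multiplication by $n^{-c_0 s}$ is an isometry and that the $\mathscr{H}^2$-norm is multiplicative over the coprime-supported factors $\exp(-c_p(\log n)p^{-s})$, you identify the $p$-local norm directly as the modified Bessel function $I_0(2|c_p|\log n)$, then invoke the classical asymptotic $I_0(z)\sim e^z/\sqrt{2\pi z}$. The two integrals are the same object, since $\int_{-\pi}^\pi n^{-2\gamma(1-\cos\theta)}\frac{d\theta}{2\pi} = n^{-2\gamma}I_0(2\gamma\log n)$; the paper's Lemma~\ref{lem:hankel} is precisely an elementary, explicit-constant version of the Bessel asymptotic you cite. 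Your approach buys a cleaner conceptual link to $I_0$ and avoids the rotation step, at the cost of appealing to the classical asymptotic rather than proving it; you also supply the Kronecker argument for $\vartheta = \mre c_1 - \sum_{p}|c_p|$, which the paper instead reads off from \eqref{eq:varthetaast}. Both routes are sound.
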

Note that the case $\vartheta=0$ is omitted from Corollary~\ref{cor:affine}. In this case the estimate from Theorem~1.3 (a) fails to be sharp, since it follows from \cite[Thm.~1]{BB17} that $\mathscr{C}_\varphi$ is not compact, and thus that $a_n(\mathscr{C}_\varphi) \asymp 1$ for $n\geq1$. In Theorem~\ref{thm:affine} we shall also consider some examples of affine symbols supported on infinite but very sparse sets of prime numbers.

In the second part of the paper, we will investigate when the composition operator $\mathscr{C}_\varphi$ is compact on $\mathscr{H}^2$. Suppose that $\varphi \in \mathscr{G}_{\geq1}$, and consider the \emph{Nevanlinna counting function} 
\begin{equation}\label{eq:nevanlinna} 
	N_\varphi(w) = \sum_{s \in \varphi^{-1}(\{w\})} \mre{s}, 
\end{equation}
defined for every $w \in \mathbb{C}_0$. Bayart \cite[Prop.~3]{Bayart03} employed the classical Littlewood inequality for the Nevanlinna counting function in the unit disc to establish the Littlewood--type estimate 
\begin{equation}\label{eq:NLest} 
	N_\varphi(w) \leq \frac{\mre{w}}{c_0}. 
\end{equation}
On account of J.~Shapiro's characterization of the compact composition operators on the Hardy space of the unit disc \cite{Shapiro87}, and the Littlewood--type estimate \eqref{eq:NLest}, it seems plausible that the compactness of $\mathscr{C}_\varphi$ on $\mathscr{H}^2$ is related to the requirement that 
\begin{equation}\label{eq:littleo} 
	\lim_{\mre{w}\to0^+} \frac{N_\varphi(w)}{\mre{w}} = 0. 
\end{equation}
Bayart \cite[Thm.~2]{Bayart03} proved that if $\mim{\varphi_0}$ is bounded and \eqref{eq:littleo} holds, then $\mathscr{C}_\varphi$ is compact on $\mathscr{H}^2$. Conversely, Bailleul \cite[Thm.~6]{Bailleul15} established that if $\varphi_0$ is supported on a finite set of prime numbers, $\varphi$ is finitely valent, and $\mathscr{C}_\varphi$ is compact on $\mathscr{H}^2$, then \eqref{eq:littleo} holds. 

We give a complete description in the case that $\varphi_0$ is supported on a single prime.
\begin{theorem}\label{thm:compact} 
	Suppose that $\varphi \in \mathscr{G}_{\geq1}$ and that $\varphi_0$ is supported on $\mathbb{P}=\{p\}$. Then $\mathscr{C}_\varphi$ is compact on $\mathscr{H}^2$ if and only if
	\[\lim_{\mre{w}\to0^+} \frac{N_\varphi(w)}{\mre{w}} = 0.\]
\end{theorem}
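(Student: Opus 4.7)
The plan is to reduce the question to Shapiro's classical compactness theorem on $H^2(\mathbb{D})$ (where $\mathbb{D}$ is the open unit disc) via a dictionary that becomes available because $\varphi_0$ is a function of the single variable $z = p^{-s}$. Write $\varphi_0(s) = \psi(p^{-s})$ where $\psi(z) = \sum_{k\geq 0} c_{p^k} z^k$ is analytic on $\mathbb{D}$ with $\psi(\mathbb{D}) \subseteq \overline{\mathbb{C}_0}$, and define the analytic self-map $\Phi(z) := z^{c_0} p^{-\psi(z)}$ of $\mathbb{D}$, so that $p^{-\varphi(s)} = \Phi(p^{-s})$. Now invoke Lemma~\ref{lem:Csum} in the form $\mathscr{H}^2 = \bigoplus_m K_m$, where $m$ runs over positive integers coprime to $p$, $K_m := \{m^{-s} g(p^{-s}) : g \in H^2(\mathbb{D})\}$, and each $K_m$ is isometrically isomorphic to $H^2(\mathbb{D})$. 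The factorization $m^{-\varphi(s)} = (m^{c_0})^{-s} \cdot m^{-\psi(p^{-s})}$ shows that $\mathscr{C}_\varphi$ sends $K_m$ into $K_{m^{c_0}}$, and the restriction is unitarily equivalent to the weighted composition operator $T_m g = w_m \cdot (g \circ \Phi)$ on $H^2(\mathbb{D})$, where $w_m := m^{-\psi}$ lies in $H^\infty(\mathbb{D})$ with $\|w_m\|_\infty \leq 1$. Since $m \mapsto m^{c_0}$ is injective, $\mathscr{C}_\varphi^\ast \mathscr{C}_\varphi = \bigoplus_m T_m^\ast T_m$, and hence $\mathscr{C}_\varphi$ is compact if and only if every $T_m$ is compact and $\|T_m\| \to 0$ as $m \to \infty$.

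Setting $m = 1$ gives $T_1 = C_\Phi$, so compactness of $\mathscr{C}_\varphi$ already yields compactness of $C_\Phi$ on $H^2(\mathbb{D})$. For the converse, assume $C_\Phi$ is compact. Then Shapiro's theorem forces $|\Phi| < 1$ almost everywhere on $\partial\mathbb{D}$, which through $|\Phi(e^{i\theta})| = p^{-\mre \psi(e^{i\theta})}$ translates into $\mre \psi > 0$ a.e.\ on $\partial\mathbb{D}$. Each $T_m = M_{w_m} C_\Phi$ is then compact as the product of a bounded multiplier and a compact operator. To establish $\|T_m\| \to 0$, I would argue by contradiction: assume $\|T_{m_k} f_k\|_{H^2} \geq \varepsilon$ for some subsequence $m_k \to \infty$ and unit vectors $f_k \in H^2(\mathbb{D})$. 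After passing to a further subsequence, $f_k \rightharpoonup f$ weakly, and $C_\Phi f_k \to C_\Phi f$ in $H^2$ by compactness of $C_\Phi$. Since $\|w_{m_k}\|_\infty \leq 1$,
\[
\|T_{m_k} f_k\|_{H^2} \leq \|C_\Phi f_k - C_\Phi f\|_{H^2} + \|w_{m_k}(C_\Phi f)\|_{H^2},
\]
and the first term tends to zero. For the second, $|w_{m_k}|^2 |C_\Phi f|^2 = m_k^{-2\mre \psi}|C_\Phi f|^2 \to 0$ almost everywhere on $\partial\mathbb{D}$ and is majorised by $|C_\Phi f|^2 \in L^1$, so dominated convergence gives $\|w_{m_k}(C_\Phi f)\|_{H^2} \to 0$, the desired contradiction.

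It remains to translate Shapiro's disc-theoretic condition for $C_\Phi$ into the Nevanlinna condition for $\varphi$. For $w \in \mathbb{C}_0$ and $\zeta := p^{-w}$, a direct computation exploiting the period relation $\varphi(s + 2\pi i/\log p) = \varphi(s) + 2\pi i c_0/\log p$ shows that the map $s \mapsto p^{-s}$ sets up a correspondence under which each $z \in \Phi^{-1}(\zeta)$ contributes $-\log|z|/\log p$ to exactly one of the $c_0$ values $N_\varphi(w + 2\pi i j/\log p)$ for $j = 0,\ldots, c_0 - 1$. Summing over $j$ yields the identity
\[
\sum_{j=0}^{c_0-1} N_\varphi\!\left(w + \frac{2\pi i j}{\log p}\right) = \frac{N_\Phi(\zeta)}{\log p}.
\]
Dividing by $\mre w = \log(1/|\zeta|)/\log p$ then shows that the condition $N_\varphi(w)/\mre w \to 0$ uniformly in $\mim w$ as $\mre w \to 0^+$ is equivalent to Shapiro's condition $N_\Phi(\zeta)/\log(1/|\zeta|) \to 0$ as $|\zeta| \to 1^-$. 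Combined with the compactness equivalence established above, this completes the proof. The main obstacle in this plan is the decay $\|T_m\| \to 0$ in the second step; the key insight is that Shapiro's theorem applied to $C_\Phi$ already delivers the a.e.\ positivity of $\mre \psi$ on $\partial\mathbb{D}$ that powers the dominated convergence argument.
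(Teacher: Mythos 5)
Your proof is correct, and it follows a genuinely different route from the paper's. The paper stays entirely on the half-plane side: it introduces the restricted counting function $\mathscr{N}_\varphi$, derives a Stanton-type change-of-variables formula (Lemma~\ref{lem:CoV}) and a Littlewood-type estimate (Lemma~\ref{lem:IAmTheta}), relates $\mathscr{N}_\varphi$ to the full $N_\varphi$ through a decomposition over horizontal strips (Theorem~\ref{thm:Nevandecomp}), and then in Section~\ref{sec:cpctproof} essentially rebuilds Shapiro's argument in this setting --- sufficiency via the Littlewood--Paley identity combined with estimates on $\mathscr{N}_\varphi$, necessity via reproducing kernels of $\mathscr{H}^2_1$ and a sub-mean-value property. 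You short-circuit all of this: by writing $p^{-\varphi(s)} = \Phi(p^{-s})$ with $\Phi(z)=z^{c_0}p^{-\psi(z)}$, you identify each block of the orthogonal decomposition as unitarily equivalent to a weighted composition operator $T_m = M_{w_m}C_\Phi$ on $H^2(\mathbb{D})$, reduce compactness of $\mathscr{C}_\varphi$ to compactness of $C_\Phi$ plus the norm decay $\|T_m\|\to 0$, obtain the latter from the former by dominated convergence (using $\mre\psi>0$ a.e.\ on $\partial\mathbb{D}$), and invoke Shapiro's classical disc theorem as a black box. Your identity $\sum_{j=0}^{c_0-1} N_\varphi(w+2\pi i j/\log p)=N_\Phi(p^{-w})/\log p$ --- which I have checked, including that the map $(j,s)\mapsto p^{-s}$ is a multiplicity-preserving bijection onto $\Phi^{-1}(p^{-w})$ --- is a clean global substitute for Theorem~\ref{thm:Nevandecomp}. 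Your route is shorter and more conceptual, leaning on classical $H^2(\mathbb{D})$ theory rather than re-deriving it; the paper's route has the merit of being self-contained and of developing machinery (restricted counting functions and their Stanton formula) that it reuses for the angle maps in Section~\ref{sec:angle}. One small remark: the implication ``$C_\Phi$ compact $\Rightarrow |\Phi^*|<1$ a.e.'' is a classical fact that predates and is independent of Shapiro's counting-function criterion, so attributing it to ``Shapiro's theorem'' is a slight imprecision, but the fact itself is standard and correct.
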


To prove this theorem, we will exploit the fact that such functions $\varphi_0$ are periodic (with period $2\pi i /\log{p}$), in addition to the orthogonal decomposition discussed earlier. Accordingly, we will decompose the Nevanlinna counting function \eqref{eq:nevanlinna} into an infinite number of \emph{restricted} counting functions. To handle these restricted counting functions we will rely on some ideas and techniques from our recent paper \cite{BP21}, where the compactness of $\mathscr{C}_\varphi$ was characterized in the case that $\varphi \in \mathscr{G}_0$. Each restricted counting function comes with a change of variable formula, also known as a Stanton formula, that allows us to express $\|\mathscr{C}_\varphi f\|_{\mathscr{H}^2}$ for Dirichlet series $f$ of a certain form, see Lemma~\ref{lem:CoV}.

To conclude the paper we will provide a detailed study of \emph{angle maps}. For $c_0\geq1$, $\vartheta \geq 0 $ and $0<\alpha<1$, consider the symbol $\varphi_{\alpha, \vartheta}(s) = c_0s + \vartheta + \Phi_\alpha(p^{-s})$, where
\[\Phi_\alpha(p^{-s}) = \left(\frac{1-p^{-s}}{1+p^{-s}}\right)^\alpha.\]
If $\vartheta>0$, then Theorem~\ref{thm:approxmain} immediately implies that $a_n(\mathscr{C}_{\varphi_{\alpha, \vartheta}}) \asymp n^{-\vartheta} (\log{n})^{-\frac{1}{2\alpha}}$ for $n\geq2$, see Corollary~\ref{cor:angle1}. Similarly to the case of affine maps discussed above, Theorem~\ref{thm:approxmain} (a) does not provide the correct lower bound when $\vartheta=0$. In this case we shall instead proceed via the change of variable formula of Lemma~\ref{lem:CoV} and detailed analysis of the restricted counting function.
\begin{theorem}\label{thm:angle2} 
	For a positive integer $c_0$ and a real number $0<\alpha<1$, let $\varphi_\alpha(s) = c_0 s + \Phi_\alpha(p^{-s})$. Then $\varphi_\alpha$ is in $\mathscr{G}_{\geq 1}$ and
	\[a_n(\mathscr{C}_{\varphi_\alpha}) \asymp (\log{n})^{\frac{\alpha-1}{2\alpha}}\]
	for $n\geq2$.
\end{theorem}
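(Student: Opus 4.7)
My plan begins by verifying $\varphi_\alpha \in \mathscr{G}_{\geq 1}$ and identifying the obstruction. Since the Möbius map $z\mapsto (1-z)/(1+z)$ carries $\mathbb{D}$ onto $\mathbb{C}_0$, the image $\Phi_\alpha(\mathbb{D})$ lies in the sector $\{w:|\arg w|<\alpha\pi/2\}\subset \mathbb{C}_0$, so $\Phi_\alpha(p^{-s})\in \mathbb{C}_0$ for every $s\in\mathbb{C}_0$. Letting $s\to 0^+$ along $\mathbb{R}$ forces $\Phi_\alpha(p^{-s})\to 0$, and therefore $\vartheta=0$. Consequently Theorem~\ref{thm:approxmain}(a) only delivers the lower bound $a_n(\mathscr{C}_{\varphi_\alpha})\gtrsim \|\mathscr{C}_{\varphi_\alpha}e_n\|_{\mathscr{H}^2}\asymp(\log n)^{-1/(2\alpha)}$, a factor $(\log n)^{1/2}$ weaker than the claim, while part (b) is unavailable because $\vartheta=0$ forces $\nu=0$ in its hypothesis. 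Thus the result is not immediate from Theorem~\ref{thm:approxmain}, and a finer analysis is required.

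The key reduction is via Lemma~\ref{lem:Csum}. Applied with $\mathbb{P}=\{p\}$, it yields the $\mathscr{C}_{\varphi_\alpha}$-invariant subspace of Dirichlet series supported on powers of $p$, which is canonically identified with $H^2(\mathbb{D})$ via the Bohr correspondence $p^{-s}\leftrightarrow z$. Under this identification $\mathscr{C}_{\varphi_\alpha}$ restricts to the classical composition operator $C_\psi$ with symbol
\[
\psi(z) = z^{c_0}\exp\!\bigl(-\log(p)\,\Phi_\alpha(z)\bigr).
\]
The map $\psi$ is a self-map of $\mathbb{D}$ touching $\partial\mathbb{D}$ only at $z=1$, with the tangential contact $1-\psi(z)\sim 2^{-\alpha}\log(p)(1-z)^\alpha$. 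Since restriction to an invariant subspace cannot increase approximation numbers, $a_n(\mathscr{C}_{\varphi_\alpha})\geq a_n(C_\psi)$, and the lower bound reduces to $a_n(C_\psi)\gtrsim (\log n)^{(\alpha-1)/(2\alpha)}$. This I would derive by testing against reproducing kernels $K_{w_j}$ at a geometric sequence $w_j\to 1$ in $\mathbb{D}$: using $C_\psi^\ast K_{w_j}=K_{\psi(w_j)}$ together with $1-|\psi(w_j)|\asymp(1-|w_j|)^\alpha$, standard spread-of-subspace techniques for cusp maps on $H^2(\mathbb{D})$ (cf.\ Queff\'elec--Seip) produce the claimed rate.

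For the matching upper bound I would invoke the Stanton change-of-variable formula of Lemma~\ref{lem:CoV}, which expresses $\|\mathscr{C}_{\varphi_\alpha}f\|_{\mathscr{H}^2}^2$ in terms of restricted Nevanlinna counting functions adapted to the orthogonal decomposition. Using the explicit form of $\psi$, the Littlewood-type estimate $N_\psi(w)\leq \mre(w)/c_0$, and the $\alpha$-contact at $z=1$, I would obtain a sharp estimate on how fast $N_\psi(w)/\mre(w)$ vanishes as $\mre(w)\to 0^+$ and then pass to the approximation numbers by the Shapiro-type argument of \cite{BP21}, yielding $a_n(\mathscr{C}_{\varphi_\alpha})\lesssim(\log n)^{(\alpha-1)/(2\alpha)}$. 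The principal obstacle is precisely the fine one-variable analysis at $\vartheta=0$: the Littlewood inequality alone is too crude to recover the sharp $\log n$ exponent on the upper bound side, and the naive $\|\mathscr{C}_{\varphi_\alpha}e_n\|$ test must be upgraded to a genuine reproducing-kernel spread argument on the lower bound side.
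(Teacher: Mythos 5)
Your verification that $\varphi_\alpha \in \mathscr{G}_{\geq 1}$, $\vartheta = 0$, and that Theorem~\ref{thm:approxmain} alone is insufficient is correct. The upper bound sketch also points in roughly the right direction (Lemma~\ref{lem:CoV} together with a sharp bound on the restricted counting function), although the reference to a ``Shapiro-type argument'' from \cite{BP21} is misplaced: that machinery establishes \emph{compactness}, not quantitative approximation-number asymptotics. The paper's actual upper bound is a direct quadratic-form estimate, using Lemma~\ref{lem:anglecounting}~(ii) to bound $\mathscr{N}_{\varphi_\alpha}(w) \ll (\mre w)^{1/\alpha}$ in the relevant angle, converting the Stanton integral into a double sum over coefficient indices, and then invoking Hilbert's inequality to obtain $\|\mathscr{C}_{\varphi_\alpha} f\|_{\mathscr{H}^2}^2 - |b_1|^2 \ll \sum_{m\ge 2} |b_m|^2/(\log m)^{1/\alpha-1}$.

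The lower bound strategy, however, contains a genuine error. You restrict to the single block $j=1$, i.e.\ the subspace of Dirichlet series supported on powers of $p$, identify it with $H^2(\mathbb{D})$, and propose to show $a_n(C_\psi) \gtrsim (\log n)^{(\alpha-1)/(2\alpha)}$ for the classical composition operator with symbol $\psi(z) = z^{c_0} p^{-\Phi_\alpha(z)}$. While the inequality $a_n(\mathscr{C}_{\varphi_\alpha}) \geq a_n(C_\psi)$ is valid for an orthogonal direct summand, the quantity $a_n(C_\psi)$ is dramatically smaller than $(\log n)^{(\alpha-1)/(2\alpha)}$: for a symbol making $\alpha$-order tangential contact with $\partial\mathbb{D}$ at a single point, the results of \cite{LLQR13, QS15two} show that $a_n(C_\psi)$ decays sub-exponentially (e.g.\ like $e^{-c\sqrt{n}}$ or $e^{-cn/\log n}$), not polylogarithmically. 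The single block $j=1$ therefore cannot produce the claimed lower bound.

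The actual source of the slow $(\log n)^{(\alpha-1)/(2\alpha)}$ decay is the simultaneous presence of \emph{all} the blocks $j\in\mathbb{O}$. The paper's proof shows, via Lemma~\ref{lem:CoV} and Lemma~\ref{lem:anglecounting}~(i), that the \emph{operator norm} of each block satisfies $\|\mathscr{C}_{\varphi_\alpha,j}\| \gg (\log j)^{(\alpha-1)/(2\alpha)}$ for $j\in\mathbb{O}\setminus\{1\}$; this is realized by testing against a specific normalized vector $f_j\in\mathscr{H}^2_j$. Since the odd integers have positive density and the approximation numbers of $\mathscr{C}_{\varphi_\alpha}=\bigoplus_j \mathscr{C}_{\varphi_\alpha,j}$ are the sorted union of the singular values of all the blocks, the $n$-th approximation number is bounded below by the $n$-th largest among $\{\|\mathscr{C}_{\varphi_\alpha,j}\|\}_{j\in\mathbb{O}}$, which is $\gg (\log n)^{(\alpha-1)/(2\alpha)}$. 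Your proposal omits this combinatorial step over infinitely many blocks, which is precisely what makes the estimate work, and consequently the reduction to a one-variable $H^2(\mathbb{D})$ problem is a dead end.
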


In the classical setting of $H^2(\mathbb{D})$, detailed studies of the approximation numbers of composition operators generated by symbols that map into an angle are carried out in \cite{LLQR13} and \cite{QS15two}. Via the transference principle of \cite[Sec.~9]{QS15}, these results also yield estimates for the approximation numbers of composition operators $\mathscr{C}_{\psi_\alpha} \colon \mathscr{H}^2 \to \mathscr{H}^2$ generated by angle maps $\psi_\alpha(s)=1/2+\Phi_\alpha(p^{-s})$.

\subsection*{Organization} In the preliminary Section~\ref{sec:prelim} we give the proof of Theorem~\ref{thm:BQSest}, and discuss the notion of vertical limit functions. In Section~\ref{sec:orth} we analyze the orthogonal decomposition of $\mathscr{C}_\varphi$ and prove Theorem~\ref{thm:lowerimp} and Theorem~\ref{thm:approxmain}. In Section~\ref{sec:affine} we apply Theorem~\ref{thm:approxmain} to affine symbols, and in Section~\ref{sec:schatten} to membership in the Schatten classes. In Section~\ref{sec:restricted} we introduce and study restricted counting functions and their associated Stanton formulas. In Section~\ref{sec:cpctproof} we provide the proof of Theorem~\ref{thm:compact}. In Section~\ref{sec:angle} we study the example of angle maps.

\subsection*{Notation} We will sometimes use the notation $f(x) \ll g(x)$ to indicate that there is a constant $C$ such that $f(x) \leq C g(x)$ for all relevant $x$. The notation $\gg$ indicates the reverse estimate, and $f(x) \asymp g(x)$ means that $f(x) \ll g(x)$ and $g(x) \ll f(x)$.

\subsection*{Acknowledgements} The authors thank the anonymous referee for suggesting an improvement to Theorem~\ref{thm:approxmain}.

\section{Preliminaries} \label{sec:prelim} 
We will have use for two additional characterizations of the approximation numbers of a bounded operator $T$ on a Hilbert space $H$, 
\begin{align}
	a_n(T) &= \sup_{\substack{E \subseteq H \\ \dim(E)=n}} \inf_{\substack{x \in E\\ \|x\|=1}} \|Tx\|, \label{eq:maxmin} \\ a_n(T) &= \inf_{\substack{ E \subseteq H \\ \dim(E)=n-1}} \sup_{\substack{x \in E^\perp \\
	\|x\|=1}} \|Tx\|. 
\label{eq:minmax} \end{align}
See for example \cite[Sec.~II.7]{GK69}. Recall also that approximation numbers satisfy the ideal property 
\begin{equation}\label{eq:ideal} 
	a_n(S_1 T S_2) \leq \|S_1\| a_n(T) \|S_2\| 
\end{equation}
for bounded operators $S_1$, $T$, and $S_2$ on a Hilbert space $H$. 

The following demonstration of Theorem~\ref{thm:BQSest}, adapted from \cite{BQS16}, illustrates the use of \eqref{eq:maxmin} and \eqref{eq:ideal}. In the proof, we also make use of the following result from \cite[p.~329]{GH99}. 
\begin{lemma}\label{lem:norm1} 
	If $\varphi \in \mathscr{G}_{\geq1}$, then $\|\mathscr{C}_\varphi\|=1$. 
\end{lemma}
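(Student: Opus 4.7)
The lower bound $\|\mathscr{C}_\varphi\| \geq 1$ is immediate: the constant function $1 \in \mathscr{H}^2$ has unit norm and satisfies $\mathscr{C}_\varphi 1 = 1$. For the upper bound, since $\mathscr{C}_\varphi$ is bounded by Gordon--Hedenmalm and Dirichlet polynomials are dense in $\mathscr{H}^2$, it suffices to prove $\|\mathscr{C}_\varphi f\|_{\mathscr{H}^2} \leq \|f\|_{\mathscr{H}^2}$ for every Dirichlet polynomial $f = \sum_{n \leq N} b_n n^{-s}$. For such $f$ the composition $\mathscr{C}_\varphi f$ is a Dirichlet series that converges uniformly on each $\mathbb{C}_\varepsilon$, $\varepsilon>0$ (inheriting this property from $\varphi_0$), so Carlson's mean-value theorem yields
\[
\|\mathscr{C}_\varphi f\|^2_{\mathscr{H}^2} = \lim_{\sigma \to 0^+}\lim_{T\to\infty}\frac{1}{2T}\int_{-T}^T |f(\varphi(\sigma+it))|^2\,dt.
\]

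The plan is to control this average through the Bohr correspondence $\mathscr{H}^2 \cong L^2(\mathbb{T}^\infty)$. By Kronecker's equidistribution theorem, the inner limit in $T$ equals $\int_{\mathbb{T}^\infty}|\widetilde{\mathscr{C}_\varphi f}(z)|^2\,dz$ as $\sigma \to 0^+$, where $\widetilde{(\cdot)}$ denotes the Bohr lift. Writing $\widetilde{\varphi_0}$ for the lift of $\varphi_0$, the bound $\mre\widetilde{\varphi_0} \geq 0$ on $\mathbb{T}^\infty$ ensures that each multiplier $n^{-\widetilde{\varphi_0}(z)} = e^{-\widetilde{\varphi_0}(z)\log n}$ lies in $H^\infty(\mathbb{T}^\infty)$ with supremum at most $1$, and a direct computation gives
\[
\widetilde{\mathscr{C}_\varphi f}(z) = \sum_{n \leq N} b_n\, \chi_{n^{c_0}}(z)\, n^{-\widetilde{\varphi_0}(z)}.
\]
Since $c_0 \geq 1$, the map $n \mapsto n^{c_0}$ is injective, so the characters $\chi_{n^{c_0}}$ are mutually orthogonal in $L^2(\mathbb{T}^\infty)$; combined with the pointwise contractivity of the multipliers, this is the structural origin of $\|\widetilde{\mathscr{C}_\varphi f}\|_{L^2}^2 \leq \sum |b_n|^2$.

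The main obstacle is that the multipliers $n^{-\widetilde{\varphi_0}(z)}$ depend on $z$ and can mix these orthogonal characters, so the bound does not follow from Parseval alone. To push through, I would expand each multiplier as a Fourier series on $\mathbb{T}^\infty$ and verify a Schur-type test for the resulting infinite matrix, using that the Fourier support of $\widetilde{\varphi_0}$ and the injectivity of $n \mapsto n^{c_0}$ keep the bookkeeping manageable. An alternative, which avoids the explicit expansion altogether, is to prove the equivalent reproducing-kernel statement that $\zeta(w+\overline u) - \zeta(\varphi(w)+\overline{\varphi(u)})$ is positive semi-definite on $\mathbb{C}_{1/2}$; this formulation exposes the role of $\mre \varphi_0 \geq 0$ most transparently and should be within reach via the Herglotz--Nevanlinna representation of $\varphi_0$.
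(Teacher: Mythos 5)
The lower bound is handled correctly, and the reduction to Dirichlet polynomials plus Carlson's formula plus the Bohr lift is the right starting point. You have also correctly diagnosed the crux: orthogonality of the characters $\chi_{n^{c_0}}$ and the pointwise bound $|n^{-\widetilde{\varphi_0}(z)}|\le 1$ do \emph{not} combine through Parseval, because the multipliers are $z$-dependent and produce genuine cross terms. That observation shows good understanding, and the reproducing-kernel reformulation (positive semi-definiteness of $\zeta(s+\bar u)-\zeta(\varphi(s)+\overline{\varphi(u)})$, equivalently $\mathscr{C}_\varphi\mathscr{C}_\varphi^\ast\le I$) is correct.

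However, the proof is not completed, and the gap is genuine. The Schur-type test you gesture at would have to control, for each integer $m$, a sum over all $n$ with $n^{c_0}\mid m$ of the Dirichlet coefficients $c_{m/n^{c_0}}(n)$ of $n^{-\varphi_0}$; this collection has no uniform size bound, and the relevant infinite matrix is not of a form for which a Schur test is standard, so ``the bookkeeping is manageable'' is not a proof. The Herglotz–Nevanlinna suggestion is closer to what actually works, but it is stated too vaguely to constitute an argument. Note also that the paper itself does not prove this lemma; it cites Gordon–Hedenmalm \cite[p.~329]{GH99}, where the upper bound $\|\mathscr{C}_\varphi\|\le 1$ is obtained precisely in the Bohr-lift picture you set up, but by a Poisson-integral/subordination argument rather than a Schur test: one writes
\[
\|\mathscr{C}_\varphi f\|_{\mathscr{H}^2}^2
= \int_{\mathbb{T}^\infty}\bigl|f_{\chi^{c_0}}\bigl(\varphi_0^\ast(\chi)\bigr)\bigr|^2\,d\mu_\infty(\chi),
\]
majorizes $|f_{\chi^{c_0}}(\sigma+it)|^2$ by the Poisson integral of $|f^\ast|^2$ on the boundary, unfolds the average using the invariance of $\mu_\infty$ under the Kronecker flow and under $\chi\mapsto\chi^{c_0}$, and finishes with a mass estimate for the Herglotz measure of $\mre\bigl(1/\varphi_\chi\bigr)$ — the point where $c_0\ge1$ enters decisively, since $1/\varphi_\chi(s)\sim 1/(c_0 s)$ forces that mass to be at most $\pi/c_0$. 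That step has no analogue in your outline, and without some substitute for it the upper bound does not follow. If you want to pursue the kernel route, you would essentially be re-deriving this subordination estimate in the language of positive harmonic functions on $\mathbb{C}_0$, which is more than ``within reach'' hand-waving — it is the whole content of the lemma.
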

\begin{proof}
	[Proof of Theorem~\ref{thm:BQSest}] We begin with the upper bound in \eqref{eq:BQSest}. Set $\psi(s)=s+\vartheta$. Note, by the definition \eqref{eq:vartheta} of $\vartheta$, that $\varphi-\vartheta$ is in $\mathscr{G}_{\geq1}$. Since $\mathscr{C}_{\varphi-\vartheta} \mathscr{C}_\psi = \mathscr{C}_\varphi$, the ideal property \eqref{eq:ideal} with $S_1 = I$, $T = \mathscr{C}_{\varphi-\vartheta}$, and $S_2=\mathscr{C}_\psi$, therefore yields
	\[a_n(\mathscr{C}_\varphi) \leq \|\mathscr{C}_{\varphi-\vartheta}\|\,a_n(\mathscr{C}_\psi) = n^{-\vartheta},\]
	where the final equality follows from Lemma~\ref{lem:norm1} and the trivial analysis of $\mathscr{C}_\psi$ presented in the introduction.
	
	For the lower bound in \eqref{eq:BQSest}, we choose $E = \spa( \{e_2,e_3,\ldots,e_{p_n}\})$ as the $n$-dimensional subspace of $\mathscr{H}^2$ in \eqref{eq:maxmin}. To estimate the infimum of $\|\mathscr{C}_\varphi f\|_{\mathscr{H}^2}$, for $f(s)=\sum_{j=1}^n b_j p_j^{-s}$ of unit norm, we consider the auxiliary subspace 
	\[F = \spa(\{e_{2^{c_0}}, e_{3^{c_0}}, \ldots, e_{p_n^{c_0}}\})\]
	and deduce from the fundamental theorem of arithmetic, orthogonality, and the Cauchy--Schwarz inequality that
	\[\|\mathscr{C}_\varphi f\|_{\mathscr{H}^2} \geq \sup_{\substack{g \in F \\ \|g\|_{\mathscr{H}^2}=1}} \big|\langle \mathscr{C}_\varphi f, g \rangle_{\mathscr{H}^2}\big| = \Bigg(\sum_{j=1}^{n} |b_j|^2 p_j^{-2\mre{c_1}}\Bigg)^{\frac{1}{2}}.\]
	Taking the infimum on the right-hand side, over all $f \in E$ of unit norm, we obtain the stated lower bound $a_n(\mathscr{C}_\varphi) \geq p_n^{-\mre{c_1}}$. 
\end{proof}

We will now briefly recall a few facts about vertical limit functions and generalized boundary values. Let $\mathbb{T}^\infty$ denote the countable infinite Cartesian product of the unit circle $\mathbb{T}$ in the complex plane, endowed with its Haar measure $\mu_\infty$. Via prime factorization, we may view any $\chi \in \mathbb{T}^\infty$ as a character,
\[\chi(n) = \chi_1^{\alpha_1} \chi_2^{\alpha_2} \cdots \chi_d^{\alpha_d} \qquad \text{for} \qquad n = \prod_{j=1}^d p_j^{\alpha_j}.\]
For a Dirichlet series $f(s)=\sum_{n\geq1}b_n n^{-s}$ and a character $\chi \in \mathbb{T}^\infty$, consider the vertical limit function
\[f_\chi(s) = \sum_{n=1}^\infty b_n \chi(n) n^{-s}.\]
If $f$ converges uniformly in $\overline{\mathbb{C}_\theta}$ for some $\theta \in \mathbb{R}$, then $\{f_\chi\}_{\chi\in\mathbb{T}^\infty}$ consists precisely of the functions which can be obtained as uniform limits in $\overline{\mathbb{C}_\theta}$ of vertical translates $f(\cdot +i\tau_k)$, where $(\tau_k)_{k\geq1}$ is a sequence of real numbers. Despite the fact that a function $f \in \mathscr{H}^2$ need only converge in $\mathbb{C}_{1/2}$, the Dirichlet series $f_\chi$ actually converges in $\mathbb{C}_0$ for almost every $\chi \in \mathbb{T}^\infty$ (see e.g.~\cite[Thm.~4.1]{HLS97}). Moreover, the generalized boundary value
\[f^\ast(\chi) = \lim_{\sigma\to0^+} f_\chi(\sigma)\]
exists for almost every $\chi \in \mathbb{T}^\infty$, and 
\begin{equation}\label{eq:H2astnorm} 
	\|f\|_{\mathscr{H}^2} = \|f^\ast\|_{L^2(\mathbb{T}^\infty)}. 
\end{equation}
The following result can be extracted from \cite[Sec.~2]{BP20}. 
\begin{lemma}\label{lem:vlf} 
	Suppose that $\varphi\colon \mathbb{C}_0\to\mathbb{C}_0$ is a Dirichlet series which converges uniformly in $\overline{\mathbb{C}_\varepsilon}$ for every $\varepsilon>0$. Then 
	\begin{enumerate}
		\item[(i)] $\varphi_\chi(\mathbb{C}_0) = \varphi(\mathbb{C}_0)$ for every $\chi \in \mathbb{T}^\infty$, and 
		\item[(ii)] $\varphi^\ast(\chi)$ exists for almost every $\chi \in \mathbb{T}^\infty$. 
	\end{enumerate}
\end{lemma}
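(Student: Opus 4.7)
The plan is to handle (i) via a Kronecker-plus-Rouché argument, and (ii) by reducing to the known existence of boundary values for bounded Dirichlet series.

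\emph{Part (i).} The starting point is that for any pair of characters $\chi, \chi' \in \mathbb{T}^\infty$, Kronecker's theorem on simultaneous Diophantine approximation produces a real sequence $(\tau_k)$ with $p_j^{-i\tau_k} \to \chi'(p_j)\overline{\chi(p_j)}$ for every $j$; combined with the uniform convergence of $\varphi$ on each $\overline{\mathbb{C}_\varepsilon}$, this gives $\varphi_\chi(\cdot + i\tau_k) \to \varphi_{\chi'}$ uniformly on compact subsets of $\mathbb{C}_0$. In particular, taking $\chi'$ to be the trivial character recovers $\varphi$ as a compactly uniform limit of vertical translates of $\varphi_\chi$, and vice versa.

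To prove $\varphi_\chi(\mathbb{C}_0) \subseteq \varphi(\mathbb{C}_0)$, fix $w = \varphi_\chi(s_0)$ with $s_0 \in \mathbb{C}_0$. If $\varphi_\chi$ is constant then every coefficient $b_n$ with $n\geq 2$ vanishes, so $\varphi \equiv b_1 = w$ and we are done. Otherwise, $w - \varphi_\chi$ has an isolated zero at $s_0$; I pick a small closed disk $\overline{D} \subset \mathbb{C}_0$ around $s_0$ on whose boundary $|w - \varphi_\chi| \geq \delta > 0$, and choose $(\tau_k)$ so that $\varphi(\cdot + i\tau_k) \to \varphi_\chi$ uniformly on $\overline{D}$. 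For large $k$, Rouché's theorem applied to $w - \varphi_\chi$ and $w - \varphi(\cdot + i\tau_k)$ on $\partial D$ yields a zero $s_k \in D$ of the latter, whence $w = \varphi(s_k + i\tau_k) \in \varphi(\mathbb{C}_0)$. The reverse inclusion follows by running the identical argument with $\varphi$ replaced by $\varphi_\chi$ and $\chi$ by $\chi^{-1}$, using $(\varphi_\chi)_{\chi^{-1}} = \varphi$.

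\emph{Part (ii).} Since $\mre \varphi > 0$ on $\mathbb{C}_0$, the function $g = 1/(1 + \varphi)$ satisfies $|g| < 1$ there. The Dirichlet series of $1 + \varphi$ has constant coefficient $1 + b_1$ with real part at least $1$, so $g$ admits a Dirichlet series expansion and its boundedness places $g$ in $\mathscr{H}^\infty$. Standard results on bounded Dirichlet series, for instance as in \cite{BP20}, then give that $g^\ast(\chi)$ exists for almost every $\chi$. Moreover, $g$ is not identically zero, and a non-zero element of $\mathscr{H}^\infty$ has boundary function non-zero almost everywhere via the Bohr correspondence with $H^\infty(\mathbb{T}^\infty)$. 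Hence $\varphi^\ast = 1/g^\ast - 1$ exists a.e.

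\emph{Main obstacle.} The main subtlety is in (i): uniform approximation on compact sets does not by itself imply that the ranges of the limit and the translates coincide, since holomorphic ranges need not be closed. Rouché's theorem is the standard way to upgrade approximate values into actual preimages, and it is exactly what is needed here. For (ii), the only delicate point is knowing that non-zero elements of $\mathscr{H}^\infty$ have a.e.\ non-zero boundary functions, which I import from the Bohr-lift framework rather than reprove.
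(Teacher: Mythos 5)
Your proof is correct. Note that the paper does not supply its own argument for this lemma; it simply refers to \cite[Sec.~2]{BP20}, so there is no in-text proof to compare against line by line. That said, your route is the standard and natural one: for (i), the combination of Kronecker's theorem (to realise $\varphi_\chi$ as a locally uniform limit of vertical translates of $\varphi$, using the uniform convergence hypothesis) and Rouch\'e's theorem (to upgrade approximate values into actual preimages) is precisely the mechanism behind the statement, and you handle the degenerate constant case correctly by noting that it forces $b_n=0$ for $n\geq 2$. For (ii), composing with a M\"obius map into the disc — here $g=1/(1+\varphi)$ — to land in $\mathscr{H}^\infty$ and then invoking the a.e.\ existence of generalized boundary values is exactly the right reduction.

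The only point worth flagging is the appeal to ``a non-zero element of $\mathscr{H}^\infty$ has boundary function non-zero almost everywhere.'' This is true but not entirely elementary; it rests on a Jensen-type inequality on $\mathbb{T}^\infty$ (equivalently, log-integrability of $g^\ast$ for nonzero $g\in\mathscr{H}^2$), which goes back to Helson and is treated, e.g., in \cite{HLS97}. Since $g(+\infty)=1/(1+b_1)\neq 0$, one has $\int_{\mathbb{T}^\infty}\log|g^\ast|\,d\mu_\infty\geq\log|g(+\infty)|>-\infty$, so $g^\ast\neq 0$ a.e.; spelling out this one-line justification, or giving a precise reference, would make the step airtight. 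Otherwise the argument is complete and correct, and the use of the ring-homomorphism property of $f\mapsto f_\chi$ to deduce $g_\chi=1/(1+\varphi_\chi)$ is sound for uniformly convergent Dirichlet series.
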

In particular, we deduce from Lemma~\ref{lem:vlf} that if $\varphi(s) = c_0s+\varphi_0(s)$ is in $\mathscr{G}$, then the expression \eqref{eq:vartheta} for $\vartheta$ has the reformulation 
\begin{equation}\label{eq:varthetaast} 
	\vartheta = \essinf_{\chi \in \mathbb{T}^\infty} \mre{\varphi_0^\ast(\chi)}. 
\end{equation}
Following \cite{GH99}, we extend the notion of vertical limit functions to symbols $\varphi \in \mathscr{G}$ by defining
\[\varphi_\chi(s) = c_0s + (\varphi_0)_\chi(s).\]
The interaction between the composition operator $\mathscr{C}_\varphi$ and vertical limits is given in \cite[Prop.~4.3]{GH99}: 
\begin{equation}\label{eq:compformula} 
	(\mathscr{C}_\varphi f)_\chi = \mathscr{C}_{\varphi_\chi} f_{\chi^{c_0}}, 
\end{equation}
where $f \in \mathscr{H}^2$, $\chi \in \mathbb{T}^\infty$, and $\chi^{c_0}(n) = \chi(n)^{c_0} =\chi(n^{c_0})$. Combining Lemma~\ref{lem:vlf}~(ii), \eqref{eq:H2astnorm}, and \eqref{eq:compformula} yields the following result. 
\begin{lemma}\label{lem:UE} 
	If $\varphi \in \mathscr{G}$ and $\chi \in \mathbb{T}^\infty$, then $\mathscr{C}_\varphi$ and $\mathscr{C}_{\varphi_\chi}$ are unitarily equivalent. 
\end{lemma}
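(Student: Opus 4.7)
The plan is to exhibit concrete unitary operators that intertwine $\mathscr{C}_\varphi$ with $\mathscr{C}_{\varphi_\chi}$. For each $\chi\in\mathbb{T}^\infty$, I would define $U_\chi\colon \mathscr{H}^2\to\mathscr{H}^2$ by $U_\chi f = f_\chi$, i.e.\ by multiplying the $n$th Dirichlet coefficient $b_n$ of $f$ by the unimodular number $\chi(n)$. This operation preserves the $\ell^2$-norm of the coefficient sequence (equivalently, by \eqref{eq:H2astnorm}, $(U_\chi f)^\ast$ is a rotation of $f^\ast$ on $\mathbb{T}^\infty$), and its inverse is clearly $U_{\chi^{-1}}$, so $U_\chi$ is unitary on $\mathscr{H}^2$.

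The key step is then to invoke the composition formula \eqref{eq:compformula}, which for every $f\in\mathscr{H}^2$ reads
\[
U_\chi(\mathscr{C}_\varphi f) \;=\; (\mathscr{C}_\varphi f)_\chi \;=\; \mathscr{C}_{\varphi_\chi} f_{\chi^{c_0}} \;=\; \mathscr{C}_{\varphi_\chi}(U_{\chi^{c_0}} f).
\]
In operator form this says $U_\chi \mathscr{C}_\varphi = \mathscr{C}_{\varphi_\chi} U_{\chi^{c_0}}$, equivalently
\[
\mathscr{C}_{\varphi_\chi} \;=\; U_\chi\,\mathscr{C}_\varphi\, U_{\chi^{c_0}}^{\ast},
\]
and since both $U_\chi$ and $U_{\chi^{c_0}}$ are unitary, the claimed equivalence follows. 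Lemma~\ref{lem:vlf}~(ii) enters to guarantee that the boundary value $\varphi^\ast(\chi)$, and hence the vertical limit symbol $\varphi_\chi$, is well-defined for almost every $\chi$, so that \eqref{eq:compformula} is a meaningful identity to apply; Lemma~\ref{lem:vlf}~(i) further ensures $\varphi_\chi\in\mathscr{G}$ so that $\mathscr{C}_{\varphi_\chi}$ is itself a bounded composition operator.

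No real obstacle arises; the proof is a mechanical assembly of the stated ingredients. The only subtlety worth flagging is that when $c_0\geq 2$ the two unitaries $U_\chi$ and $U_{\chi^{c_0}}$ need not coincide, so the equivalence is implemented in the two-sided form $A = UBV^\ast$ rather than the strict $A = UBU^\ast$. This is immaterial for the sequel, since both forms preserve all singular-value-type invariants — operator norm, approximation numbers, compactness, and Schatten-class membership — which is precisely what is needed to transfer results between $\mathscr{C}_\varphi$ and $\mathscr{C}_{\varphi_\chi}$.
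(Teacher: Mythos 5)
Your proof is correct and fills in exactly what the paper leaves implicit, since the paper's ``proof'' is only the remark that Lemma~\ref{lem:vlf}~(ii), \eqref{eq:H2astnorm}, and \eqref{eq:compformula} combine to give the result. Two small calibration points: the vertical limit $\varphi_\chi$ is actually defined for \emph{every} $\chi$ (uniform convergence of $\varphi_0$ in $\overline{\mathbb{C}_\varepsilon}$ is unaffected by multiplying coefficients by unimodular numbers), so the role of Lemma~\ref{lem:vlf} here is really part (i), which guarantees $\varphi_\chi\in\mathscr{G}$ and hence that $\mathscr{C}_{\varphi_\chi}$ is a bounded composition operator; and the two unitaries $U_\chi$, $U_{\chi^{c_0}}$ already differ when $c_0=0$ (then $U_{\chi^{c_0}}=I$), not only when $c_0\geq2$. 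Your observation that the resulting two-sided relation $\mathscr{C}_{\varphi_\chi}=U_\chi\,\mathscr{C}_\varphi\,U_{\chi^{c_0}}^{\ast}$ preserves norm, approximation numbers, compactness, and Schatten membership is exactly the point: this weaker notion of equivalence is all the paper ever uses.
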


\section{Orthogonal decomposition and approximation numbers} \label{sec:orth} 
We now fix a subset $\mathbb{P}$ of the full set of prime numbers. For each $j \in \mathscr{M}(\mathbb{P}^\perp)$, we let $\mathscr{H}^2_j$ denote the subspace of $\mathscr{H}^2$ comprised of Dirichlet series of the form $e_j f$, where $f$ is supported on $\mathbb{P}$. Since $\mathscr{H}^2_{j_1} \perp \mathscr{H}^2_{j_2}$ if $j_1 \neq j_2$, we have the orthogonal decomposition 
\begin{equation}\label{eq:H2sum} 
	\mathscr{H}^2 = \bigoplus_{j \in \mathscr{M}(\mathbb{P}^\perp)} \mathscr{H}^2_j. 
\end{equation}
The following simple observation is the starting point of the present paper. 
\begin{lemma}\label{lem:Csum} 
	Let $\varphi \in \mathscr{G}_{\geq1}$ and suppose that $\varphi_0$ is supported on $\mathbb{P}$. For every $j \in \mathscr{M}(\mathbb{P}^\perp)$, let $\mathscr{C}_{\varphi,j}$ denote the operator obtained by restricting $\mathscr{C}_\varphi$ to $\mathscr{H}^2_j$. Then
	\[\mathscr{C}_\varphi = \bigoplus_{j \in \mathscr{M}(\mathbb{P}^\perp)} \mathscr{C}_{\varphi,j}.\]
\end{lemma}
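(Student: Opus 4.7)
The plan is to verify that $\mathscr{C}_\varphi$ respects the orthogonal decomposition $\mathscr{H}^2 = \bigoplus_{j \in \mathscr{M}(\mathbb{P}^\perp)} \mathscr{H}^2_j$ in the sense that it maps each summand $\mathscr{H}^2_j$ into $\mathscr{H}^2_{j^{c_0}}$. It suffices to check this on basis elements $e_{jm}$ with $m \in \mathscr{M}(\mathbb{P})$ and then extend by linearity and the boundedness of $\mathscr{C}_\varphi$. A direct computation gives
\[
\mathscr{C}_\varphi e_{jm}(s) = (jm)^{-\varphi(s)} = e_{j^{c_0}}(s)\,e_{m^{c_0}}(s)\,j^{-\varphi_0(s)}\,m^{-\varphi_0(s)},
\]
after separating the factor $(jm)^{-c_0 s} = e_{j^{c_0}}\,e_{m^{c_0}}$ from the factor $(jm)^{-\varphi_0(s)} = j^{-\varphi_0(s)}\,m^{-\varphi_0(s)}$.

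The key point is that, for any positive integer $k$, the function $k^{-\varphi_0(s)}$ is itself a Dirichlet series supported on $\mathscr{M}(\mathbb{P})$. To see this, I would write $\varphi_0 = c_1 + u$ with $u(s) = \sum_{n \in \mathscr{M}(\mathbb{P}),\, n\geq 2} c_n n^{-s}$ and expand
\[
k^{-\varphi_0(s)} = k^{-c_1} \sum_{r=0}^\infty \frac{(-\log k)^r}{r!}\,u(s)^r.
\]
Since $\mathscr{M}(\mathbb{P})$ is closed under multiplication, every power $u(s)^r$ is a Dirichlet series supported on $\mathscr{M}(\mathbb{P})$, and hence so is $k^{-\varphi_0(s)}$. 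Applied with $k=j$ and $k=m$, and combined with $e_{m^{c_0}}$ (which is trivially supported on $\mathscr{M}(\mathbb{P})$), this shows that the last three factors in the previous display constitute a Dirichlet series supported on $\mathscr{M}(\mathbb{P})$, so that $\mathscr{C}_\varphi e_{jm}$ indeed belongs to $\mathscr{H}^2_{j^{c_0}}$.

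With the invariance $\mathscr{C}_\varphi(\mathscr{H}^2_j) \subseteq \mathscr{H}^2_{j^{c_0}}$ in hand, the lemma is immediate: every $f \in \mathscr{H}^2$ decomposes uniquely as $f = \sum_j f_j$ with $f_j \in \mathscr{H}^2_j$, and continuity of $\mathscr{C}_\varphi$ yields $\mathscr{C}_\varphi f = \sum_j \mathscr{C}_{\varphi,j} f_j$. The summands lie in the subspaces $\mathscr{H}^2_{j^{c_0}}$, which are pairwise orthogonal because $j \mapsto j^{c_0}$ is injective on $\mathscr{M}(\mathbb{P}^\perp)$, and this is precisely the asserted direct sum decomposition. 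The main obstacle is really the bookkeeping in the middle paragraph: one must be sure that expanding $k^{-\varphi_0(s)}$ as an exponential series produces a genuine Dirichlet series supported on $\mathscr{M}(\mathbb{P})$. This hinges on the multiplicative closure of $\mathscr{M}(\mathbb{P})$, while convergence itself is free from the uniform convergence of $\varphi_0$ on half-planes $\overline{\mathbb{C}_\varepsilon}$.
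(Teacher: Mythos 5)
Your argument is correct and follows the same route as the paper: reduce to showing $\mathscr{C}_\varphi(\mathscr{H}^2_j) \subseteq \mathscr{H}^2_{j^{c_0}}$ via the action on basis vectors $e_{jm}$, factor out $(jm)^{-c_0 s}$, and observe that the residual factor $(jm)^{-\varphi_0(s)}$ is supported on $\mathscr{M}(\mathbb{P})$. The only difference is that the paper states this last support property without comment, while you justify it by the exponential expansion of $k^{-\varphi_0(s)}$ together with the multiplicative closure of $\mathscr{M}(\mathbb{P})$ --- a small but welcome amount of extra detail.
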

\begin{proof}
	In view of \eqref{eq:H2sum}, it is sufficient to prove that $\mathscr{C}_\varphi$ maps $\mathscr{H}^2_j$ to $\mathscr{H}^2_{j^{c_0}}$, since the map $j \mapsto j^{c_0}$ is injective on $\mathscr{M}(\mathbb{P}^\perp)$. Consider the action of $\mathscr{C}_\varphi$ on $e_n$, where $n=jk$ for $j \in \mathscr{M}(\mathbb{P}^\perp)$ and $k \in \mathscr{M}(\mathbb{P})$:
	\[\mathscr{C}_\varphi e_n (s) = j^{-c_0 s} k^{-c_0 s} n^{-\varphi_0(s)}.\]
	We see that $\mathscr{C}_\varphi e_n \in \mathscr{H}^2_{j^{c_0}}$, as a consequence of the assumption that $\varphi_0$ is supported on $\mathbb{P}$. 
\end{proof}

In view of Lemma~\ref{lem:Csum} there is for every $n\geq1$ some $m\geq1$ and $j \in\mathscr{M}(\mathbb{P}^\perp)$ such that $a_n(\mathscr{C}_\varphi) = a_m(\mathscr{C}_{\varphi,j})$. We first apply this to obtain a lower bound for the approximation numbers of $\mathscr{C}_\varphi$ which will immediately imply our first main result. 
\begin{lemma}\label{lem:sparse} 
	Let $\varphi \in \mathscr{G}_{\geq1}$ and suppose that $\varphi_0$ is supported on a sparse set of prime numbers $\mathbb{P}$. There is then a positive integer $m=m(\mathbb{P})$ such that
	\[a_n(\mathscr{C}_\varphi) \geq \|\mathscr{C}_\varphi e_{mn}\|_{\mathscr{H}^2}.\]
\end{lemma}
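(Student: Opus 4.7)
The plan is to combine the max-min characterization \eqref{eq:maxmin} of approximation numbers with a test subspace tailored to the orthogonal decomposition of Lemma~\ref{lem:Csum}. Enumerate $\mathscr{M}(\mathbb{P}^\perp) = \{j_1 < j_2 < \cdots\}$ and take $E_n = \spa\{e_{j_1}, \ldots, e_{j_n}\}$. Since each $e_{j_i}$ lies in $\mathscr{H}^2_{j_i}$ and $j \mapsto j^{c_0}$ is injective on $\mathscr{M}(\mathbb{P}^\perp)$, Lemma~\ref{lem:Csum} places the vectors $\mathscr{C}_\varphi e_{j_i}$ in pairwise orthogonal subspaces $\mathscr{H}^2_{j_i^{c_0}}$. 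Hence, for a unit vector $f = \sum_{i=1}^n b_i e_{j_i} \in E_n$,
\[\|\mathscr{C}_\varphi f\|_{\mathscr{H}^2}^2 = \sum_{i=1}^n |b_i|^2 \|\mathscr{C}_\varphi e_{j_i}\|_{\mathscr{H}^2}^2 \geq \min_{1 \leq i \leq n} \|\mathscr{C}_\varphi e_{j_i}\|_{\mathscr{H}^2}^2,\]
which via \eqref{eq:maxmin} gives $a_n(\mathscr{C}_\varphi) \geq \min_{1 \leq i \leq n} \|\mathscr{C}_\varphi e_{j_i}\|_{\mathscr{H}^2}$.

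Next I would show that $N \mapsto \|\mathscr{C}_\varphi e_N\|_{\mathscr{H}^2}$ is non-increasing on $\mathbb{N}$. From the composition formula \eqref{eq:compformula} and Lemma~\ref{lem:vlf}, the generalized boundary value of $\mathscr{C}_\varphi e_N(s) = N^{-\varphi(s)}$ at $\chi \in \mathbb{T}^\infty$ is $\chi(N)^{c_0} N^{-\varphi_0^\ast(\chi)}$, so by \eqref{eq:H2astnorm}
\[\|\mathscr{C}_\varphi e_N\|_{\mathscr{H}^2}^2 = \int_{\mathbb{T}^\infty} N^{-2\mre \varphi_0^\ast(\chi)}\, d\mu_\infty(\chi).\]
Since $\varphi \in \mathscr{G}_{\geq 1}$, the reformulation \eqref{eq:varthetaast} yields $\mre \varphi_0^\ast(\chi) \geq \vartheta \geq 0$ for almost every $\chi$, so the integrand is pointwise non-increasing in $N$ and therefore so is the integral. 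In particular $\min_i \|\mathscr{C}_\varphi e_{j_i}\|_{\mathscr{H}^2} = \|\mathscr{C}_\varphi e_{j_n}\|_{\mathscr{H}^2}$.

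It remains to bound $j_n$ linearly in $n$. Here I invoke the classical fact that sparseness of $\mathbb{P}$ is equivalent to convergence of the Euler product $\prod_{p \in \mathbb{P}}(1-p^{-1})^{-1}$, which by inclusion–exclusion gives that $\mathscr{M}(\mathbb{P}^\perp)$ has positive natural density $\delta = \prod_{p \in \mathbb{P}}(1-p^{-1}) > 0$. Choosing $m = m(\mathbb{P})$ to be any sufficiently large integer exceeding $1/\delta$ (enlarging $m$ if necessary to handle finitely many small values of $n$), we obtain $j_n \leq mn$ for every $n \geq 1$, and the monotonicity from the previous paragraph then gives $\|\mathscr{C}_\varphi e_{j_n}\|_{\mathscr{H}^2} \geq \|\mathscr{C}_\varphi e_{mn}\|_{\mathscr{H}^2}$, completing the proof.

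The heart of the argument is the orthogonal decomposition of Lemma~\ref{lem:Csum}, and the only step requiring any outside input is the density statement for $\mathscr{M}(\mathbb{P}^\perp)$; this is the place where sparseness of $\mathbb{P}$ (as opposed to some weaker assumption) is used in an essential way, and is therefore the conceptual bottleneck even though the proof itself is classical.
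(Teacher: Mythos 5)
Your proof is correct and follows essentially the same route as the paper: both rest on the orthogonal decomposition of Lemma~\ref{lem:Csum}, the integral formula $\|\mathscr{C}_\varphi e_N\|_{\mathscr{H}^2}^2 = \int_{\mathbb{T}^\infty} N^{-2\mre\varphi_0^\ast(\chi)}\,d\mu_\infty(\chi)$ with its monotonicity in $N$, and the observation that sparseness of $\mathbb{P}$ forces $j_n \leq mn$ via positive density of $\mathscr{M}(\mathbb{P}^\perp)$. The only cosmetic difference is that you apply \eqref{eq:maxmin} directly on the test space $\spa\{e_{j_1},\dots,e_{j_n}\}$, while the paper instead notes that $j\mapsto\|\mathscr{C}_{\varphi,j}\|$ is decreasing and reads off $a_n(\mathscr{C}_\varphi)\geq\|\mathscr{C}_{\varphi,j_n}\|$ from the block-diagonal structure; both paths land on the same intermediate bound $a_n(\mathscr{C}_\varphi)\geq\|\mathscr{C}_\varphi e_{j_n}\|_{\mathscr{H}^2}$.
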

\begin{proof}
	By definition, any $f_j \in \mathscr{H}^2_j$ can be written $f_j = e_j f$ for a function $f$ supported on $\mathbb{P}$, and $\|f_j\|_{\mathscr{H}^2}=\|f\|_{\mathscr{H}^2}$. By Lemma~\ref{lem:vlf} (ii) and the composition rule \eqref{eq:compformula}, we have that
	\[(\mathscr{C}_\varphi f_j)^\ast(\chi) = \chi^{c_0}(j) j^{-\varphi_0^\ast(\chi)} f_{\chi^{c_0}}(\varphi_0^\ast(\chi))\]
	for almost every $\chi \in \mathbb{T}^\infty$. This formula is at first valid for polynomials $f$, but by a density argument it continues to hold for general $f$ supported on $\mathbb{P}$, if we interpret $f_{\chi^{c_0}}(\varphi_0^\ast(\chi))$ as a generalized boundary value when needed. By \eqref{eq:H2astnorm} we therefore have that
	\[\|\mathscr{C}_\varphi f_j\|_{\mathscr{H}^2}^2 = \int_{\mathbb{T}^\infty} j^{-2\mre{\varphi_0^\ast(\chi)}} \left|f_{\chi^{c_0}}(\varphi_0^\ast(\chi))\right|^2\,d\mu_\infty(\chi).\]
	In particular, $j \mapsto \|\mathscr{C}_{\varphi,j}\|=a_1(\mathscr{C}_{\varphi,j})$ is decreasing for $j\in\mathscr{M}(\mathbb{P}^\perp)$. Letting $(j_n)_{n\geq1}$ denote the increasing sequence of integers in $\mathscr{M}(\mathbb{P}^\perp)$, we conclude that
	\[a_n(\mathscr{C}_\varphi) \geq a_1(\mathscr{C}_{\varphi,j_n})=\|\mathscr{C}_{\varphi,j_n}\|\geq \|\mathscr{C}_\varphi e_{j_n}\|_{\mathscr{H}^2},\]
	since $e_{j_n}\in\mathscr{H}^2_{j_n}$ and $\|e_{j_n}\|_{\mathscr{H}^2}=1$. The hypothesis that $\mathbb{P}$ is sparse means that
	\[\lim_{N\to\infty} \frac{1}{N} \operatorname{card}\left\{ j \in \mathscr{M}(\mathbb{P}^\perp) \,:\, j \leq N\right\} = \prod_{p \in \mathbb{P}} \left(1-\frac{1}{p}\right) = C(\mathbb{P}) \neq 0,\]
	and thus that there is a positive integer $m$ such that $j_n \leq mn$ for every $n\geq 1$. Therefore
	\[\|\mathscr{C}_\varphi e_{j_n}\|_{\mathscr{H}^2}^2 \geq \int_{\mathbb{T}^\infty} (mn)^{-2\mre{\varphi_0^\ast(\chi)}} \,d\mu_\infty(\chi) = \|\mathscr{C}_\varphi e_{mn}\|_{\mathscr{H}^2}^2. \qedhere \]
\end{proof}
\begin{proof}
	[Proof of Theorem~\ref{thm:lowerimp}] Since $\varphi_0$ is supported on a sparse set of prime numbers, Lemma~\ref{lem:sparse} yields that
	\[a_n(\mathscr{C}_\varphi) \geq \|\mathscr{C}_\varphi e_{mn}\|_{\mathscr{H}^2}\]
	for some positive integer $m$. Set $X_\varepsilon=\left\{\chi \in \mathbb{T}^\infty\,:\,\vartheta \leq \mre{\varphi_0^\ast(\chi)} \leq \vartheta+\varepsilon\right\}$. Then $\mu_\infty(X_\varepsilon)>0$, referring to \eqref{eq:varthetaast}, and accordingly
	\[\|\mathscr{C}_\varphi e_{mn}\|_{\mathscr{H}^2}^2 = \int_{\mathbb{T}^\infty} (mn)^{-2\mre{\varphi_0^\ast(\chi)}}\,d\mu_\infty(\chi)\geq \mu_\infty(X_\varepsilon) (mn)^{-2(\vartheta+\varepsilon)}.\]
	This gives the stated estimate with $C=\sqrt{\mu_\infty(X_\varepsilon)} m^{-\vartheta-\varepsilon}$. 
\end{proof}

We now turn toward proving Theorem~\ref{thm:approxmain} (a).
\begin{lemma}\label{lem:doubling} 
	Suppose that $\varphi \in \mathscr{G}_{\geq1}$ and let $m$ be a positive integer. There is a constant $C=C(\varphi_0,m)>0$ such that
	\[\|\mathscr{C}_\varphi e_n\|_{\mathscr{H}^2} \leq C \|\mathscr{C}_\varphi e_{mn}\|_{\mathscr{H}^2}\]
	for every integer $n\geq1$.
\end{lemma}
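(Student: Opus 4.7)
The plan is to express $\|\mathscr{C}_\varphi e_n\|_{\mathscr{H}^2}^2$ as the value at $\log n$ of the Laplace transform
\[F(t) = \int_{\mathbb{T}^\infty} e^{-t \beta(\chi)}\,d\mu_\infty(\chi), \qquad \beta(\chi) = 2\mre\varphi_0^\ast(\chi),\]
and then to read off the conclusion from the log-convexity of $F$. Applying \eqref{eq:compformula}, \eqref{eq:H2astnorm}, and Lemma~\ref{lem:vlf}~(ii) to $e_n(s)=n^{-s}$ gives the boundary value $(\mathscr{C}_\varphi e_n)^\ast(\chi) = \chi(n)^{c_0} n^{-\varphi_0^\ast(\chi)}$, and consequently $\|\mathscr{C}_\varphi e_n\|_{\mathscr{H}^2}^2 = F(\log n)$ for every $n \geq 1$. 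The inequality to be proved thus becomes $F(\log n) \leq C^2 F(\log n + \log m)$, uniformly in $n$.

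In the nontrivial case of $\mathscr{G}_{\geq 1}$, $\varphi_0$ maps $\mathbb{C}_0$ into itself, and then Lemma~\ref{lem:vlf}~(i) gives $\mre\varphi_{0,\chi}(\sigma) > 0$ for every $\sigma > 0$ and almost every $\chi$; in particular $\beta \geq 0$ almost everywhere. Hence $F$ is smooth, positive, decreasing, and log-convex on $[0,\infty)$: its second logarithmic derivative $(\log F)''(t)$ equals the variance of $\beta$ against the probability measure $F(t)^{-1}e^{-t\beta}\,d\mu_\infty$. Therefore $(\log F)'$ is nondecreasing, so that
\[(\log F)'(t) \;\geq\; (\log F)'(0) \;=\; -\int_{\mathbb{T}^\infty}\beta\,d\mu_\infty \;=\; -2\int_{\mathbb{T}^\infty}\mre\varphi_0^\ast\,d\mu_\infty, \qquad t \geq 0.\]

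The hardest and most delicate step, I expect, is bounding this last integral in terms of $\varphi_0$ alone, since a priori $\mre\varphi_0^\ast$ need not be bounded. For this I would use character orthogonality: the uniform convergence of the Dirichlet series of $\varphi_0$ on $\overline{\mathbb{C}_\sigma}$ for $\sigma > 0$ permits term-by-term integration and yields the mean-value identity $\int_{\mathbb{T}^\infty}\varphi_{0,\chi}(\sigma)\,d\mu_\infty(\chi) = c_1$. Since $\mre\varphi_{0,\chi}(\sigma) \geq 0$ and $\mre\varphi_{0,\chi}(\sigma) \to \mre\varphi_0^\ast(\chi)$ almost everywhere as $\sigma \to 0^+$, Fatou's lemma then delivers the clean bound $\int_{\mathbb{T}^\infty}\mre\varphi_0^\ast\,d\mu_\infty \leq \mre c_1$. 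Consequently $(\log F)'(t) \geq -2\mre c_1$ for every $t \geq 0$.

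Integrating this lower bound from $\log n$ to $\log n + \log m$ yields $F(\log n) \leq m^{2\mre c_1}\,F(\log n + \log m)$, and taking square roots produces the lemma with $C(\varphi_0,m) = m^{\mre c_1}$. The remaining trivial case $\varphi_0 \equiv i\tau$ corresponds to $\mre c_1 = 0$ and $\beta \equiv 0$, in which $C = 1$ works.
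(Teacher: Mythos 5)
Your proof is correct, and it takes a genuinely different route than the paper. The paper argues pointwise: with $X_\varepsilon = \{\chi : \vartheta \leq \mre\varphi_0^\ast(\chi) \leq \vartheta+\varepsilon\}$ (which has positive measure by \eqref{eq:varthetaast}), it compares the full integral for $\|\mathscr{C}_\varphi e_n\|^2$ to its average over $X_\varepsilon$, pulls out the factor $m^{2(\vartheta+\varepsilon)}$ on that set, and extends back to $\mathbb{T}^\infty$, obtaining the constant $m^{\vartheta+\varepsilon}/\sqrt{\mu_\infty(X_\varepsilon)}$. You instead use the log-convexity of the Laplace transform $F(t) = \int e^{-t\beta}\,d\mu_\infty$ to reduce everything to the value of $(\log F)'$ at $t=0$, and then control $\int\beta\,d\mu_\infty$ via the mean-value identity $\int\varphi_{0,\chi}(\sigma)\,d\mu_\infty = c_1$ combined with Fatou's lemma. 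Both are sound; your argument has the virtue of producing a fully explicit constant $C = m^{\mre c_1}$, whereas the paper's constant $m^{\vartheta+\varepsilon}/\sqrt{\mu_\infty(X_\varepsilon)}$ is not explicit (its denominator depends opaquely on $\varphi_0$) but has the smaller exponent $\vartheta+\varepsilon$, and it is that smaller exponent which the authors reuse later (the step \eqref{eq:dagain} in the proof of Theorem~\ref{thm:approxmain}~(b) follows ``the proof of Lemma~\ref{lem:doubling} verbatim with $\varepsilon = \vartheta$''). One small point worth making explicit in a write-up: the log-convexity and differentiability of $F$ at $t=0^+$ already tacitly require $\int\beta\,d\mu_\infty < \infty$, so the Fatou step is needed before invoking $(\log F)'(0)$, not after; the argument is easily reordered to accommodate this.
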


\begin{proof}
	As before, we compute the norms on $\mathbb{T}^\infty$, so that
	\[\|\mathscr{C}_\varphi e_n\|_{\mathscr{H}^2}^2 = \int_{\mathbb{T}^\infty} n^{-2\mre{\varphi_0^\ast(\chi)}}\,d\mu_\infty(\chi).\]
	For any $\varepsilon>0$, consider the set $X_\varepsilon=\left\{\chi \in \mathbb{T}^\infty\,:\,\vartheta \leq \mre{\varphi_0^\ast(\chi)} \leq \vartheta+\varepsilon\right\}$. As in the proof of Theorem~\ref{thm:lowerimp}, we know that $\mu_\infty(X_\varepsilon)>0$. Since $x \mapsto n^{-x}$ is non-increasing for $x>0$, it follows, by interpreting each side of the inequality as an average, that
	\[\|\mathscr{C}_\varphi e_n\|_{\mathscr{H}^2}^2 \leq \frac{1}{\mu_\infty(X_\varepsilon)} \int_{X_\varepsilon} n^{-2\mre{\varphi_0^\ast(\chi)}}\,d\mu_\infty(\chi).\]
 By the definition of $X_\varepsilon$, we find that
	\[\int_{X_\varepsilon} n^{-2\mre{\varphi_0^\ast(\chi)}}\,d\mu_\infty(\chi) \leq m^{2(\vartheta+\varepsilon)} \int_{X_\varepsilon} (mn)^{-2\mre{\varphi_0^\ast(\chi)}}\,d\mu_\infty(\chi).\]
	Extending the final integral from $X_\varepsilon$ to $\mathbb{T}^\infty$, we conclude that
	\[\|\mathscr{C}_\varphi e_n\|_{\mathscr{H}^2}^2 \leq \frac{m^{2(\vartheta+\varepsilon)}}{\mu_\infty(X_\varepsilon)} \|\mathscr{C}_\varphi e_{mn}\|_{\mathscr{H}^2}^2. \qedhere\]
\end{proof}

\begin{proof}
	[Proof of Theorem~\ref{thm:approxmain} (a)] Combining Lemma~\ref{lem:sparse} and Lemma~\ref{lem:doubling} yields that
	\[a_n(\mathscr{C}_\varphi) \geq \|\mathscr{C}_\varphi e_{mn}\|_{\mathscr{H}^2} \geq C^{-1} \|\mathscr{C}_\varphi e_n\|_{\mathscr{H}^2},\]
	where $m$ is as in Lemma~\ref{lem:sparse} and $C$ is from Lemma~\ref{lem:doubling}. 
\end{proof}

The remainder of this section is devoted to the proof of Theorem~\ref{thm:approxmain} (b). For notational reasons, we introduce the partial zeta function
\[\zeta_{\mathbb{P}}(s) = \prod_{p \in \mathbb{P}} \frac{1}{1-p^{-s}}.\]
It is clear that if $\mathbb{P}$ is $\nu$-sparse for some $0<\nu\leq1$, then $\zeta_{\mathbb{P}}(\nu)<\infty$.
\begin{lemma}\label{lem:abelsum} 
	Suppose that $\mathbb{P}$ is a set of $\nu$-sparse prime numbers for some $0 < \nu \leq 1$. Then
	\[\sum_{\substack{k \in \mathscr{M}(\mathbb{P}) \\ k \geq K}} k^{-2\sigma} \leq \zeta_{\mathbb{P}}(\nu) K^{\nu-2\sigma}\]
	for every $K \in \mathscr{M}(\mathbb{P})$ and every $2\sigma \geq \nu$. 
\end{lemma}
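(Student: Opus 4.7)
The plan is to split the exponent $2\sigma$ as $\nu + (2\sigma - \nu)$ with $2\sigma - \nu \geq 0$, which lets me pull out a single factor that doesn't depend on $k$. Specifically, for each $k \in \mathscr{M}(\mathbb{P})$ with $k \geq K$, I would write
\[ k^{-2\sigma} = k^{-\nu}\,k^{-(2\sigma-\nu)} \leq k^{-\nu}\,K^{-(2\sigma-\nu)} = K^{\nu-2\sigma}\, k^{-\nu},\]
using that $x \mapsto x^{-(2\sigma-\nu)}$ is non-increasing on $(0,\infty)$. Summing this elementary bound over the set $\{k \in \mathscr{M}(\mathbb{P}) : k \geq K\}$ and then enlarging the sum to run over all of $\mathscr{M}(\mathbb{P})$ yields
\[ \sum_{\substack{k \in \mathscr{M}(\mathbb{P}) \\ k \geq K}} k^{-2\sigma} \leq K^{\nu-2\sigma} \sum_{k \in \mathscr{M}(\mathbb{P})} k^{-\nu}. \]

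The remaining step is to recognize the resulting sum as the partial zeta function at $\nu$. By unique factorization on $\mathscr{M}(\mathbb{P})$ and absolute convergence (guaranteed by the $\nu$-sparseness hypothesis), the Euler product identity gives
\[ \sum_{k \in \mathscr{M}(\mathbb{P})} k^{-\nu} = \prod_{p \in \mathbb{P}} \sum_{j=0}^{\infty} p^{-j\nu} = \prod_{p \in \mathbb{P}} \frac{1}{1-p^{-\nu}} = \zeta_{\mathbb{P}}(\nu),\]
which completes the estimate. There is no real obstacle here; the only thing worth double-checking is that the assumption $K \in \mathscr{M}(\mathbb{P})$ is actually not used for the inequality itself (it is only used elsewhere to guarantee that the indexing set of summation is nonempty and well-aligned), and that $\nu$-sparseness is precisely what makes $\zeta_{\mathbb{P}}(\nu)$ finite so the right-hand side is meaningful.
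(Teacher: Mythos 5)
Your proof is correct and follows exactly the paper's argument: factor out $K^{\nu-2\sigma}$ using monotonicity, enlarge the sum over $\mathscr{M}(\mathbb{P})$, and identify the result with $\zeta_{\mathbb{P}}(\nu)$ via the Euler product. The only difference is that you spell out the Euler product identity explicitly, which the paper takes as given.
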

\begin{proof}
	We estimate
	\[\sum_{\substack{k \in \mathscr{M}(\mathbb{P}) \\ k \geq K}} k^{-2\sigma} \leq K^{\nu-2\sigma} \sum_{\substack{k \in \mathscr{M}(\mathbb{P}) \\ k \geq K}} k^{-\nu } \leq K^{\nu-2\sigma} \sum_{k \in \mathscr{M}(\mathbb{P})} k^{-\nu} = K^{\nu-2\sigma} \zeta_{\mathbb{P}}(\nu). \qedhere\]
\end{proof}
\begin{lemma}\label{lem:upperest} 
	Fix $\varphi \in \mathscr{G}_{\geq1}$ and suppose that $\varphi_0$ is supported on a $\nu$-sparse set of prime numbers $\mathbb{P}$ for some $0 < \nu \leq 1$. If $2\vartheta \geq \nu$, then
	\[a_m(\mathscr{C}_{\varphi,j}) \leq \sqrt{\zeta_{\mathbb{P}}(\nu)} k_m^{\nu/2} \|\mathscr{C}_\varphi e_{j k_m} \|_{\mathscr{H}^2},\]
	where $(k_m)_{m\geq1}$ are the integers of $\mathscr{M}(\mathbb{P})$ in increasing order and $j \in \mathscr{M}(\mathbb{P}^\perp)$. 
\end{lemma}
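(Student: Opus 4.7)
The plan is to exploit the minmax formula \eqref{eq:minmax}, applied to the restriction $\mathscr{C}_{\varphi,j}$ on the Hilbert space $\mathscr{H}^2_j$. Since every element of $\mathscr{H}^2_j$ can be written as $e_j f$ for some $f$ supported on $\mathbb{P}$, and since the map $f \mapsto e_j f$ is an isometry from the $\mathbb{P}$-supported subspace of $\mathscr{H}^2$ onto $\mathscr{H}^2_j$, the natural $(m-1)$-dimensional test subspace is
\[E = \spa\{e_{j k_1}, e_{j k_2}, \ldots, e_{j k_{m-1}}\} \subset \mathscr{H}^2_j.\]
An element $f_j = e_j f$ in $E^\perp \cap \mathscr{H}^2_j$ with $\|f_j\|_{\mathscr{H}^2}=1$ then corresponds bijectively to a unit-norm
\[f(s) = \sum_{\substack{k \in \mathscr{M}(\mathbb{P}) \\ k \geq k_m}} b_k k^{-s}\]
supported on the $\mathbb{P}$-integers of size at least $k_m$. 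By \eqref{eq:minmax} it suffices to estimate $\|\mathscr{C}_\varphi f_j\|_{\mathscr{H}^2}$ uniformly over all such $f_j$ and deduce the desired bound on $a_m(\mathscr{C}_{\varphi,j})$.

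Next, I would reuse the integral identity derived at the beginning of the proof of Lemma~\ref{lem:sparse}, namely
\[\|\mathscr{C}_\varphi f_j\|_{\mathscr{H}^2}^2 = \int_{\mathbb{T}^\infty} j^{-2\mre{\varphi_0^\ast(\chi)}} \left|f_{\chi^{c_0}}(\varphi_0^\ast(\chi))\right|^2\, d\mu_\infty(\chi).\]
For almost every $\chi$, the point $w = \varphi_0^\ast(\chi)$ lies in $\overline{\mathbb{C}_\vartheta}$, so writing $\sigma = \mre w \geq \vartheta$ and applying the Cauchy--Schwarz inequality to the defining series for $f_{\chi^{c_0}}(w)$ yields
\[\left|f_{\chi^{c_0}}(w)\right|^2 \leq \Bigg(\sum_{\substack{k \in \mathscr{M}(\mathbb{P}) \\ k \geq k_m}} |b_k|^2\Bigg) \Bigg(\sum_{\substack{k \in \mathscr{M}(\mathbb{P}) \\ k \geq k_m}} k^{-2\sigma}\Bigg).\]
The first factor equals $\|f\|_{\mathscr{H}^2}^2 = \|f_j\|_{\mathscr{H}^2}^2$. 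Since the hypothesis $2\vartheta \geq \nu$ ensures that $2\sigma \geq \nu$ for a.e.\ $\chi$, Lemma~\ref{lem:abelsum} applied with $K = k_m \in \mathscr{M}(\mathbb{P})$ bounds the second factor by $\zeta_\mathbb{P}(\nu)\, k_m^{\nu-2\sigma}$.

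Collecting these estimates, the integrand in the formula above is dominated by $\zeta_\mathbb{P}(\nu)\, k_m^\nu (jk_m)^{-2\mre \varphi_0^\ast(\chi)} \|f_j\|_{\mathscr{H}^2}^2$, and integrating over $\mathbb{T}^\infty$ gives
\[\|\mathscr{C}_\varphi f_j\|_{\mathscr{H}^2}^2 \leq \zeta_\mathbb{P}(\nu)\, k_m^\nu\, \|\mathscr{C}_\varphi e_{jk_m}\|_{\mathscr{H}^2}^2,\]
where the remaining integral has been recognized as $\|\mathscr{C}_\varphi e_{jk_m}\|_{\mathscr{H}^2}^2$ via the same computation used in Lemma~\ref{lem:sparse}. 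Taking the supremum over unit $f_j \in E^\perp \cap \mathscr{H}^2_j$ and applying \eqref{eq:minmax} completes the proof. There is no real obstacle here; the only nontrivial input is verifying that the hypothesis $2\vartheta \geq \nu$ is precisely what permits the use of Lemma~\ref{lem:abelsum}, and that the Cauchy--Schwarz split produces exactly the two factors ($\|f\|_{\mathscr{H}^2}^2$ and a tail sum) that are needed to separate out $\|\mathscr{C}_\varphi e_{jk_m}\|_{\mathscr{H}^2}^2$ from the universal constant.
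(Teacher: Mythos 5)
Your proposal is correct and follows essentially the same route as the paper: the min-max principle \eqref{eq:minmax} with $E = \spa\{e_{jk_1},\ldots,e_{jk_{m-1}}\}$, followed by Cauchy--Schwarz on the tail Dirichlet series, Lemma~\ref{lem:abelsum} (enabled by $2\vartheta\geq\nu$), and integration over $\mathbb{T}^\infty$ to recognize $\|\mathscr{C}_\varphi e_{jk_m}\|_{\mathscr{H}^2}^2$. The only cosmetic difference is that you factor $f_j = e_j f$ and apply Cauchy--Schwarz to $f$, whereas the paper applies it directly to the series of $f_j$; the two computations are identical once the $j^{-2\mre{s}}$ factor is pulled out.
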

\begin{proof}
	We apply the min-max principle \eqref{eq:minmax}, choosing $E \subseteq \mathscr{H}^2_j$ as
	\[E = \spa \left(\left\{e_{jk_1}, e_{j k_2}, \ldots, e_{j k_{m-1}}\right\}\right).\]
	This gives us that
	\[a_m(\mathscr{C}_{\varphi,j}) \leq \sup_{\substack{f \in E^\perp \\ \|f\|_{\mathscr{H}^2}=1}} \|\mathscr{C}_\varphi f\|_{\mathscr{H}^2}.\]
	Accordingly, suppose that $f \in E^\perp$ with $\|f\|_{\mathscr{H}^2}=1$. If $\mre{s}\geq \vartheta$, the Cauchy--Schwarz inequality and Lemma~\ref{lem:abelsum} imply that $f(s)$ converges absolutely, and that
	\[|f(s)|^2 \leq \sum_{\substack{k \in \mathscr{M}(\mathbb{P}) \\ k \geq k_m}} (jk)^{-2\mre{s}} = j^{-2\mre{s}}\sum_{\substack{k \in \mathscr{M}(\mathbb{P}) \\ k \geq k_m}} k^{-2\mre{s}} \leq \zeta_{\mathbb{P}}(\nu) k_m^\nu (j k_m)^{-2\mre{s}}.\]
	Of course the same estimate also holds if $f$ is replaced by $f_{\chi^{c_0}}$ for any $\chi \in \mathbb{T}^\infty$. Since $s=\mre{\varphi_0^\ast(\chi)} \geq \vartheta$ for almost every $\chi$, we may therefore apply this estimate in conjunction with Lemma~\ref{lem:vlf} (ii), \eqref{eq:H2astnorm} and \eqref{eq:compformula} to see that 
	\begin{align*}
		\|\mathscr{C}_\varphi f\|_{\mathscr{H}^2}^2 &= \int_{\mathbb{T}^\infty} |f_{\chi^{c_0}}(\varphi_0^\ast(\chi))|^2 \,d\mu_\infty(\chi) \\
		&\leq \int_{\mathbb{T}^\infty} \zeta_{\mathbb{P}}(\nu) k_m^\nu (j k_m)^{-2\mre{\varphi_0^\ast(\chi)}}\,d\mu_\infty(\chi)= \zeta_{\mathbb{P}}(\nu)k_m^\nu \|\mathscr{C}_\varphi e_{j k_m}\|_{\mathscr{H}^2}^2. 
	\end{align*}
	Together with the min-max principle, this gives the claimed estimate. 
\end{proof}
\begin{proof}
	[Proof of Theorem~\ref{thm:approxmain} (b)] The function $\Phi \colon [1,\infty) \to (0,1]$ defined by
	\[\Phi(x) = \left(\int_{\mathbb{T}^\infty} x^{-2\mre{\varphi_0^\ast(\chi)}}\,d\mu_\infty(\chi)\right)^\frac{1}{2}\]
	is strictly decreasing, onto (by the assumption $\vartheta>0$), continuous and enjoys the estimate $\Phi(xy) \leq y^{-\vartheta} \Phi(x)$ for every $x,y\geq1$. Hence $\Phi$ has an inverse function $\Phi^{-1}\colon (0,1]\to [1,\infty)$ satisfying the same properties and enjoying the estimate 
	\begin{equation}\label{eq:invest} 
		\Phi^{-1}(xy) \leq y^{-1/\vartheta} \Phi^{-1}(x) 
	\end{equation}
	for every $0<x,y \leq 1$. Fix some $0<x\leq1$. The orthogonal decomposition of Lemma~\ref{lem:Csum} allows us to rewrite
	\[\big|\big\{n \in \mathbb{N} \,:\, a_n(\mathscr{C}_\varphi) \geq \sqrt{\zeta_{\mathbb{P}}(\nu)} \, x \big\}\big| = \big|\big\{(j,m) \in \mathscr{M}(\mathbb{P}^\perp) \times \mathbb{N} \,:\, a_m(\mathscr{C}_{\varphi_j}) \geq \sqrt{\zeta_{\mathbb{P}}(\nu)} x \big\}\big|.\]
	We now apply Lemma~\ref{lem:upperest} to bound the right-hand side from above. Note that the hypotheses of Lemma~\ref{lem:upperest} certainly hold, since we are working under the stronger assumptions that $0 < \nu < 1$ and $2\vartheta\geq \nu/(1-\nu)$. We obtain that 
	\begin{align*}
		\big|\big\{n \in \mathbb{N} \,:\, a_n(\mathscr{C}_\varphi) \geq \sqrt{\zeta_{\mathbb{P}}(\nu)} \, x \big\}\big| &\leq \big|\big\{(j,m) \in \mathscr{M}(\mathbb{P}^\perp) \times \mathbb{N} \,:\, k_m^{\nu/2} \Phi(j k_m ) \geq x \big\}\big| \\
		&= \big|\big\{(j,m) \in \mathscr{M}(\mathbb{P}^\perp) \times \mathbb{N} \,:\, j \leq \Phi^{-1}(x k_m^{-\nu/2})/k_m \big\}\big|. 
	\end{align*}
	Counting for each $m$ the number of positive integers $j$ (not only those in $\mathscr{M}(\mathbb{P}^\perp)$) which satisfy the inequality $j \leq \Phi^{-1}(x k_m^{-\nu/2})/k_m$, we therefore have the upper bound
	\[\big|\big\{n \in \mathbb{N} \,:\, a_n(\mathscr{C}_\varphi) \geq \sqrt{\zeta_{\mathbb{P}}(\nu)} \, x \big\}\big| \leq \sum_{m=1}^\infty \frac{\Phi^{-1}\big(x k_m^{-\nu/2}\big)}{k_m} \leq \Phi^{-1}(x) \zeta_{\mathbb{P}}(1-\nu/(2\vartheta)),\]
	where the second inequality comes from \eqref{eq:invest} applied with $y = k_m^{-\nu/2} \leq 1$. Since $2\vartheta\geq \nu/(1-\nu)$, we conclude that the estimate 
	\begin{equation}\label{eq:letsgo} 
		\big|\big\{n \in \mathbb{N} \,:\, a_n(\mathscr{C}_\varphi) \geq \sqrt{\zeta_{\mathbb{P}}(\nu)} \, x \big\}\big| \leq \zeta_{\mathbb{P}}(\nu) \,\Phi^{-1}(x) 
	\end{equation}
	holds for every $0<x\leq1$. 
	
	Since $\vartheta>0$, there is a smallest positive integer $N$ such that $N^{2\vartheta}\geq \zeta_{\mathbb{P}}(\nu)$. By the upper bound in Theorem~\ref{thm:BQSest} it follows that $a_n(\mathscr{C}_\varphi) \leq \sqrt{\zeta_{\mathbb{P}}(\nu)}$ for every $n\geq N$. Applying \eqref{eq:letsgo} with $x =a_n(\mathscr{C}_\varphi)/\sqrt{\zeta_{\mathbb{P}}(\nu)} \leq 1$ immediately gives us that 
	\begin{equation}\label{eq:nbe} 
		a_n(\mathscr{C}_\varphi) \leq \sqrt{\zeta_{\mathbb{P}}(\nu)} \Phi\left(\frac{n}{\zeta_{\mathbb{P}}(\nu)}\right) 
	\end{equation}
	for every $n \geq N$. Following the proof of Lemma~\ref{lem:doubling} verbatim with $\varepsilon=\vartheta$ yields that 
	\begin{equation}\label{eq:dagain} 
		\Phi\left(\frac{n}{\zeta_{\mathbb{P}}(\nu)}\right) \leq \frac{\big(\zeta_{\mathbb{P}}(\nu)\big)^{2\vartheta}}{\sqrt{\mu_\infty(X)}} \Phi(n), 
	\end{equation}
	where the set $X = \left\{\chi \in \mathbb{T}^\infty\,:\,\vartheta \leq \mre{\varphi_0^\ast(\chi)} \leq 2\vartheta\right\}$ satisfies $\mu_\infty(X)>0$. Combining \eqref{eq:nbe} and \eqref{eq:dagain}, we conclude that
	\[a_n(\mathscr{C}_\varphi) \leq \frac{\big(\zeta_{\mathbb{P}}(\nu)\big)^{1/2+2\vartheta}}{\sqrt{\mu_\infty(X)}} \|\mathscr{C}_\varphi e_n\|_{\mathscr{H}^2}\]
	for every $n \geq N$, which completes the proof. 
\end{proof}

\section{Composition operators generated by affine symbols} \label{sec:affine} 
To exemplify Theorem~\ref{thm:approxmain} we consider affine symbols, which we recall from \eqref{eq:affine} to have the form
\[\varphi(s) = c_0 s + c_1 + \sum_{p\in\mathbb{P}} c_p p^{-s}.\]
At first, we assume that $\varphi$ is supported by a set of $|\mathbb{P}|=d < \infty$ prime numbers. In particular, $c_p\neq0$ for every $p \in \mathbb{P}$. Note from \eqref{eq:varthetaast} that
\[\vartheta = \mre{c_1}-\sum_{p\in\mathbb{P}} |c_p|.\]

Before proving Corollary~\ref{cor:affine}, let us quickly recall the known results about $a_n(\mathscr{C}_\varphi)$ in this setting. We begin with the case $c_0=0$, in which case we must require that $\vartheta\geq 1/2$ in order for $\mathscr{C}_\varphi$ to be bounded. Queff\'{e}lec and Seip \cite[Thm.~1.3]{QS15} have established that if $\vartheta=1/2$, then
\[\left(\frac{1}{n}\right)^{(d-1)/2} \ll a_n(\mathscr{C}_\varphi) \ll \left(\frac{\log{n}}{n}\right)^{(d-1)/2}.\]
If $\vartheta>1/2$, then by \cite[Thm.~4.1]{MPQ18} we have that
\[a_n(\mathscr{C}_\varphi) \ll \left(\frac{\mre{c_1}-\vartheta}{\mre{c_1}-1/2}\right)^n,\]
where the implied constant depends on $\mre{c_1}$ and $\vartheta>1/2$, but \emph{not} on $d$. Actually, the estimate is stated and proved only for $d=1$ in \cite{MPQ18}. However, it can be extended to general $d\geq1$ by applying the max-min principle \eqref{eq:maxmin} and the subordination principle for affine symbols from \cite[Thm.~5]{BP20}. 

Suppose instead that $c_0\geq1$. If $\vartheta>0$, then the best previously known estimates were from Theorem~\ref{thm:BQSest}. As mentioned in the introduction, if $\vartheta=0$ for an affine symbol $\varphi$, then $a_n(\mathscr{C}_\varphi)\asymp 1$ for $n\geq1$, and so this case is not of interest.

To prove Corollary~\ref{cor:affine}, we require the following version of Hankel's asymptotic estimate for the modified Bessel function of the second kind with parameter $0$. It will be convenient for us to have explicit constants; we have made no attempt to optimize these.
\begin{lemma}\label{lem:hankel} 
	If $x \geq \frac{1}{8}$, then
	\[\frac{1}{\pi \sqrt{2e}}\frac{1}{\sqrt{x}} \leq \int_{-\pi}^\pi e^{- 4x \sin^2(\theta/2)}\,\frac{d\theta}{2\pi} \leq \frac{\sqrt{\pi}}{4} \frac{1}{\sqrt{x}}.\]
\end{lemma}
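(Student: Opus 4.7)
The integrand has the useful identity $4\sin^2(\theta/2) = 2(1-\cos\theta)$, but for this lemma we do not need to invoke the Bessel function representation. Instead, the two bounds will come from two elementary pointwise inequalities for $\sin(\theta/2)$ on $[-\pi,\pi]$, together with one Gaussian integral in each direction.

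For the upper bound, the plan is to use Jordan's inequality $\sin(\theta/2) \geq \theta/\pi$ valid for $0 \leq \theta \leq \pi$, so that $4\sin^2(\theta/2) \geq 4\theta^2/\pi^2$. Then
\[
\int_{-\pi}^{\pi} e^{-4x\sin^2(\theta/2)}\,\frac{d\theta}{2\pi}
\leq \int_{-\infty}^{\infty} e^{-4x\theta^2/\pi^2}\,\frac{d\theta}{2\pi}
= \frac{\sqrt{\pi}}{4\sqrt{x}},
\]
where the last equality is the substitution $u = 2\sqrt{x}\,\theta/\pi$ applied to $\int_{-\infty}^{\infty}e^{-u^2}\,du = \sqrt{\pi}$. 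No restriction on $x$ is needed for this direction.

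For the lower bound, the plan is to use the opposing inequality $\sin(\theta/2) \leq \theta/2$, which yields $4x\sin^2(\theta/2) \leq x\theta^2$. I would then restrict the integral to the symmetric window $|\theta| \leq \theta_0$ where $x\theta_0^2$ is a fixed small constant. Optimizing $\theta_0 \mapsto \theta_0\, e^{-x\theta_0^2}$ points to the choice $\theta_0 = 1/\sqrt{2x}$, which gives a uniform lower bound $e^{-1/2}$ on the integrand over that window:
\[
\int_{-\pi}^{\pi} e^{-4x\sin^2(\theta/2)}\,\frac{d\theta}{2\pi}
\geq \int_{-1/\sqrt{2x}}^{1/\sqrt{2x}} e^{-1/2}\,\frac{d\theta}{2\pi}
= \frac{1}{\pi\sqrt{2e}}\frac{1}{\sqrt{x}}.
\]
This is exactly the claimed lower bound. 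The only subtlety is that the window must actually fit inside $[-\pi,\pi]$, which requires $1/\sqrt{2x} \leq \pi$, equivalently $x \geq 1/(2\pi^2)$. Since $1/(2\pi^2) < 1/8$, the hypothesis $x \geq 1/8$ is precisely what is needed (and is in fact more than enough) to make this step legal. No real obstacles arise; the minor point is just to recognize that the exponent $-1/2$ in the lower bound, which matches the desired constant $1/\sqrt{2e}$, is forced by the optimization of $\theta_0$, and that the threshold $x \geq 1/8$ in the hypothesis comfortably guarantees $\theta_0 \leq \pi$.
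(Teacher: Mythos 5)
Your proof is correct and follows essentially the same route as the paper: Jordan's inequality $|\sin(\theta/2)| \geq |\theta|/\pi$ plus a Gaussian extension for the upper bound, and the elementary bound $|\sin(\theta/2)| \leq |\theta|/2$ together with restriction to a window $|\theta|\leq \theta_0$ (equivalently, the paper's parameter $\varepsilon = \theta_0\sqrt{x}$, optimized to $\varepsilon = 1/\sqrt{2}$) for the lower bound. Your observation that the window only needs $x \geq 1/(2\pi^2)$, so that $x\geq 1/8$ is more than sufficient, is accurate; the paper's hypothesis $x\geq 1/8$ arises from the slightly conservative requirement $\varepsilon \leq 2\sqrt{x}$ rather than the sharp $\varepsilon \leq \pi\sqrt{x}$, as the authors note they did not try to optimize constants.
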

\begin{proof}
	For the upper bound, we use that $|\sin(\theta/2)| \geq |\theta/\pi|$ for $-\pi \leq \theta \leq \pi$ to conclude that
	\[\int_{-\pi}^\pi e^{- 4x \sin^2(\theta/2)}\,\frac{d\theta}{2\pi} \leq \int_{-\infty}^\infty e^{- \frac{4x}{\pi^2}\,\theta^2}\,\frac{d\theta}{2\pi} = \frac{\sqrt{\pi}}{4} \frac{1}{\sqrt{x}}.\]
	For the lower bound, we suppose that $0<\varepsilon \leq 2\sqrt{x}$. Then
	\[\int_{-\pi}^\pi e^{- 4x \sin^2(\theta/2)}\,\frac{d\theta}{2\pi} \geq \frac{e^{-\varepsilon^2}}{2\pi}\,\left|\left\{-\pi \leq \theta \leq \pi \,:\, |\sin(\theta/2)| \leq \frac{\varepsilon}{2\sqrt{x}} \right\}\right| \geq \frac{\varepsilon e^{-\varepsilon^2}}{\pi} \frac{1}{\sqrt{x}}.\]
	where we used that $|\sin(\theta/2)|\leq |\theta/2|$ for the final inequality. The stated lower bound is obtained by choosing $\varepsilon = 1/\sqrt{2}$, which is permissible by the assumption that $x \geq 1/8$. 
\end{proof}
\begin{proof}
	[Proof of Corollary~\ref{cor:affine}] In view of Lemma~\ref{lem:UE} we may without loss of generality replace $\varphi$ by $\varphi_\chi$ for any $\chi \in \mathbb{T}^\infty$ This allows us to assume that $\varphi_0$ is of the form
	\[\varphi_0(s)=\vartheta + i \tau + \sum_{p\in\mathbb{P}} \gamma_p (1-p^{-s}),\]
	where $\tau\in\mathbb{R}$ and $\gamma_p>0$ for $p \in \mathbb{P}$. By Theorem~\ref{thm:approxmain} (a) and (b), we need to estimate
	\[\|\mathscr{C}_\varphi e_n\|_{\mathscr{H}^2}^2 = \int_{\mathbb{T}^\infty} n^{-2\mre{\varphi_0^\ast(\chi)}}\,d\mu_\infty(\chi) = n^{-2\vartheta} \prod_{p\in\mathbb{P}} \int_{-\pi}^\pi n^{-2\gamma_p(1-\cos{\theta_p})}\,\frac{d\theta_p}{2\pi}\]
	as $n\to\infty$. Suppose that $n$ is large enough that $\gamma_p \log{n} \geq \frac{1}{8}$ for every $p \in \mathbb{P}$. Then, by applying Lemma~\ref{lem:hankel} with $x = \gamma_p \log{n}$,
	\[\prod_{p\in\mathbb{P}} \int_{-\pi}^\pi n^{-2\gamma_p (1-\cos{\theta_p})}\,\frac{d\theta_p}{2\pi} = \prod_{p\in\mathbb{P}}\int_{-\pi}^\pi n^{-4\gamma_p \sin^2(\theta_p/2)}\,\frac{d\theta_p}{2\pi} \asymp (\log{n})^{-\frac{d}{2}}. \qedhere\]
\end{proof}

We finish this section by discussing a class of affine symbols with $|\mathbb{P}| = \infty$. For any affine symbol with absolutely convergent coefficients, the image $\varphi_0^\ast(\mathbb{T}^\infty)$ is an annulus (see e.g.~\cite[Sec.~XI.5]{Titchmarsh86}). Hence $\varphi_0^\ast(\mathbb{T}^\infty)$ touches the line $\mre w = \vartheta$ tangentially. However, the examples of this section show that the interaction between different prime numbers is essential in determining the behavior of the approximation numbers $a_n(\mathscr{C}_\varphi)$. When $c_0=0$, symbols with $|\mathbb{P}| = \infty$ have previously been considered in \cite[Thm.~1.3]{QS15}.
\begin{theorem}\label{thm:affine} 
	Let $\mathbb{P}=(p_j)_{j\geq1}$ be a set of prime numbers which is $\nu$-sparse for every $\nu>0$. For fixed $c_0\geq1$, $\vartheta>0$, and $\beta>1$, define
	\[\varphi(s) = c_0 s + \vartheta + \sum_{j=1}^\infty \frac{1-p_j^{-s}}{j^\beta}.\]
	Then there are positive constants $C_1=C_1(\beta)$ and $C_2=C_2(\beta)$ such that
	\[n^{-\vartheta} e^{-C_1 (\log{n})^{1/\beta}} \ll a_n(\mathscr{C}_\varphi) \ll n^{-\vartheta} e^{-C_2 (\log{n})^{1/\beta}}\]
	for $n\geq2$. 
\end{theorem}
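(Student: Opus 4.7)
The plan is to apply Theorem~\ref{thm:approxmain} in both directions, which reduces the problem to estimating $\|\mathscr{C}_\varphi e_n\|_{\mathscr{H}^2}$. One first checks that $\varphi \in \mathscr{G}_{\geq 1}$: the coefficient of $p_j^{-s}$ in $\varphi_0$ is $-j^{-\beta}$, so since $\mathbb{P}$ is $\nu$-sparse for every $\nu > 0$, the series defining $\varphi_0$ converges uniformly in $\overline{\mathbb{C}_\varepsilon}$ for each $\varepsilon > 0$; moreover $\mre \varphi_0(s) = \vartheta + \sum_j j^{-\beta}(1 - \mre p_j^{-s}) \geq \vartheta > 0$ on $\mathbb{C}_0$. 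Part (a) of Theorem~\ref{thm:approxmain} now applies at once, while for part (b) we observe that since $\vartheta > 0$, we may pick $\nu \in (0,1)$ small enough that $2\vartheta \geq \nu/(1-\nu)$, and by hypothesis $\mathbb{P}$ is $\nu$-sparse. Accordingly, the theorem reduces to proving
\[\|\mathscr{C}_\varphi e_n\|_{\mathscr{H}^2} \asymp n^{-\vartheta} \exp\bigl(-C(\log n)^{1/\beta}\bigr)\]
with implied constants and $C$ depending only on $\beta$.

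Arguing exactly as in the proof of Corollary~\ref{cor:affine}, the independence of the Haar coordinates on $\mathbb{T}^\infty$ gives the product formula
\[\|\mathscr{C}_\varphi e_n\|_{\mathscr{H}^2}^2 = n^{-2\vartheta} \prod_{j=1}^\infty I_j(n), \qquad I_j(n) = \int_{-\pi}^\pi \exp\Bigl(-\tfrac{4\log n}{j^\beta}\sin^2(\theta/2)\Bigr) \frac{d\theta}{2\pi}.\]
Let $J = \lfloor (8\log n)^{1/\beta}\rfloor$, which is the cutoff above which Lemma~\ref{lem:hankel} ceases to apply (since then $x = \log n/j^\beta < 1/8$). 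For the upper bound on the norm, since $I_j(n) \leq 1$ for every $j$, it suffices to control the tail $\prod_{j>J}I_j(n)$. Applying $1 - e^{-t} \geq t/2$ for $t \in [0,1]$ inside the integral yields $I_j(n) \leq 1 - \log n/j^\beta$ for $j > J$, and therefore
\[\sum_{j>J} \log I_j(n) \leq -\log n \sum_{j>J} j^{-\beta} \asymp -\log n \cdot J^{1-\beta} \asymp -(\log n)^{1/\beta},\]
completing the upper bound.

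The matching lower bound requires both ranges. For $j > J$, a Taylor expansion gives $I_j(n) \geq 1 - 2\log n/j^\beta \geq 3/4$, and combined with $\log(1-x) \geq -2x$ on $[0,1/2]$ this produces $\sum_{j>J} \log I_j(n) \geq -4\log n \sum_{j>J} j^{-\beta} \gg -(\log n)^{1/\beta}$. For $j \leq J$, the lower bound in Lemma~\ref{lem:hankel} yields $\log I_j(n) \geq \tfrac{\beta}{2}\log j - \tfrac{1}{2}\log\log n + O(1)$. The crucial cancellation comes from combining Stirling's formula $\sum_{j\leq J}\log j = J\log J - J + O(\log J)$ with $\log J = \beta^{-1}\log\log n + O(1)$, which makes the resulting $\tfrac{J}{2}\log\log n$ term coming from $\tfrac{\beta}{2}J\log J$ exactly cancel the $-\tfrac{J}{2}\log\log n$ contribution, leaving $\sum_{j \leq J} \log I_j(n) = O(J) = O((\log n)^{1/\beta})$. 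The most delicate step is precisely this cancellation: without it one would incur a spurious factor of $\log\log n$ in the exponent, destroying the stated asymptotic.
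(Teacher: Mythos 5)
Your proposal is correct and follows essentially the same approach as the paper: reduce via both parts of Theorem~\ref{thm:approxmain} to the estimate of $\|\mathscr{C}_\varphi e_n\|_{\mathscr{H}^2}$, split the resulting infinite product at $J\asymp(\log n)^{1/\beta}$, apply Lemma~\ref{lem:hankel} and Stirling's formula (with the crucial $\log\log n$ cancellation) for $j\leq J$, and use elementary bounds for $j>J$. The only cosmetic difference is that you extract the upper-bound decay from the tail $j>J$ via $e^{-t}\leq 1-t/2$ and discard the range $j\leq J$ with the trivial bound $I_j\leq 1$, whereas the paper extracts the decay from the range $j\leq J$ and discards the tail; in fact both ranges independently yield decay of order $e^{-c(\log n)^{1/\beta}}$, so either choice works.
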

\begin{proof}
	Since $\mathbb{P}$ is $\nu$-sparse for every $\nu>0$ and since $\vartheta>0$, we can use both parts of Theorem~\ref{thm:approxmain} to conclude that
	\[(a_n(\mathscr{C}_\varphi))^2 \asymp \|\mathscr{C}_\varphi e_n \|_{\mathscr{H}^2}^2 = n^{-2\vartheta} \prod_{j=1}^\infty \int_{-\pi}^\pi n^{-\frac{2}{j^\beta}(1-\cos{\theta_j})}\,\frac{d\theta_j}{2\pi}.\]
	We need to estimate the integrals
	\[I_{j,\beta}(n) = \int_{-\pi}^\pi n^{-\frac{2}{j^\beta}(1-\cos{\theta_j})}\,\frac{d\theta_j}{2\pi}\]
	for $n\geq2$. Let $J=\lfloor (\log{n})^{1/\beta} \rfloor$. When $j>J$ we estimate roughly to obtain that $n^{-4/j^\beta} \leq I_{j,\beta}(n) \leq 1$. Hence 
	\begin{equation}\label{eq:bigj} 
		\exp\left(-\frac{4}{\beta-1} (\log{n})^{1/\beta}\right) \leq \prod_{j>J} I_{j,\beta}(n) \leq 1. 
	\end{equation}
	Next we turn to $1 \leq j \leq J$, applying Lemma~\ref{lem:hankel} with $x=(\log{n})/j^\beta$ to see that 
	\begin{equation}\label{eq:smallj} 
		\left(\frac{1}{\pi \sqrt{2e}}\right)^J \prod_{j=1}^J \sqrt{\frac{j^\beta}{\log{n}}} \leq \prod_{j=1}^J I_{j,\beta}(n) \leq \left(\frac{\sqrt{\pi}}{4}\right)^J \prod_{j=1}^J \sqrt{\frac{j^\beta}{\log{n}}}. 
	\end{equation}
	From Stirling's formula we find that
	\[\prod_{j=1}^J \sqrt{\frac{j^\beta}{\log{n}}} \asymp \exp\left(\frac{\beta}{2}\left(\left(J+\frac{1}{2}\right)\log(J)-J\right)- \frac{J}{2}\log\log{n}\right).\]
	That is, since $J=\lfloor (\log{n})^{1/\beta} \rfloor$, 
	\begin{equation}\label{eq:prod} 
		\prod_{j=1}^J \sqrt{\frac{j^\beta}{\log{n}}} \asymp \exp\left(-\frac{\beta}{2}(\log{n})^{1/\beta} + \frac{\log\log{n}}{4} \right). 
	\end{equation}
	Combining \eqref{eq:bigj}, \eqref{eq:smallj}, and \eqref{eq:prod}, noting that $\frac{1}{\pi \sqrt{2e}} < \frac{\sqrt{\pi}}{4} < 1$, yields the desired estimates. 
\end{proof}

\section{Schatten classes} \label{sec:schatten} 
For $1\leq q<\infty$, a linear operator $T$ on a Hilbert space $H$ belongs to the Schatten class $S_q$ if $(a_n(T))_{n\geq1} \in \ell^q$, in which case its Schatten norm is given by 
\[\|T\|_{S_q}^q = \sum_{n=1}^\infty |a_n(T)|^q.\]
Let $(x_n)_{n\geq1}$ be an orthonormal basis of $H$. If $T \in S_q$, then 
\begin{equation}\label{eq:Spnec} 
	\sum_{n=1}^\infty |\langle T x_n, x_n \rangle|^q \leq \|T\|_{S_q}^q. 
\end{equation}
On the other hand, if $1 \leq q \leq 2$ and $\sum_{n\geq1} \|Tx_n\|^q < \infty$, then $T \in S_q$, and 
\begin{equation}\label{eq:Spsuf} 
	\|T\|_{S_q}^q \leq \sum_{n=1}^\infty \|Tx_n\|^q. 
\end{equation}
The necessary and sufficient conditions \eqref{eq:Spnec} and \eqref{eq:Spsuf} for Schatten membership can be found for example in \cite[pp.~94--95]{GK69}.

Let us now return to composition operators $\mathscr{C}_\varphi$ with symbols $\varphi \in \mathscr{G}_{\geq 1}$. Note from Theorem~\ref{thm:BQSest} that if $\vartheta>1/q$ for some $1 \leq q<\infty$, then $\mathscr{C}_\varphi \in S_q$. We can use \eqref{eq:Spnec} to obtain the following converse.
\begin{theorem}\label{thm:pline} 
	Let $\varphi \in \mathscr{G}_{\geq1}$. If $\mathscr{C}_\varphi \in S_q$ for some $1 \leq q < \infty$, then $\vartheta \geq 1/q$. 
\end{theorem}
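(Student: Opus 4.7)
The plan is to apply the necessary condition \eqref{eq:Spnec} not to $\mathscr{C}_\varphi$ itself but to the positive operator $\mathscr{C}_\varphi^\ast \mathscr{C}_\varphi$, whose diagonal entries in the standard basis $(e_n)_{n \geq 1}$ are precisely the squared norms $\|\mathscr{C}_\varphi e_n\|_{\mathscr{H}^2}^2$. A useful lower bound on this quantity already appears in the proof of Theorem~\ref{thm:lowerimp}: for every $\varepsilon > 0$ the set $X_\varepsilon = \{\chi \in \mathbb{T}^\infty : \vartheta \leq \mre \varphi_0^\ast(\chi) \leq \vartheta + \varepsilon\}$ has positive Haar measure by \eqref{eq:varthetaast}, so that
\[
\|\mathscr{C}_\varphi e_n\|_{\mathscr{H}^2}^2 \;\geq\; \mu_\infty(X_\varepsilon)\, n^{-2(\vartheta+\varepsilon)}.
\]

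Treating first the case $q \geq 2$: if $\mathscr{C}_\varphi \in S_q$ then $\mathscr{C}_\varphi^\ast \mathscr{C}_\varphi \in S_{q/2}$ with singular values $a_n(\mathscr{C}_\varphi)^2$, giving $\|\mathscr{C}_\varphi^\ast \mathscr{C}_\varphi\|_{S_{q/2}}^{q/2} = \|\mathscr{C}_\varphi\|_{S_q}^q$. Since $q/2 \geq 1$, the necessary condition \eqref{eq:Spnec} applies to the positive operator $\mathscr{C}_\varphi^\ast\mathscr{C}_\varphi$ at exponent $q/2$ in the basis $(e_n)$, yielding
\[
\sum_{n=1}^\infty \|\mathscr{C}_\varphi e_n\|_{\mathscr{H}^2}^q \;=\; \sum_{n=1}^\infty \langle \mathscr{C}_\varphi^\ast \mathscr{C}_\varphi e_n, e_n\rangle^{q/2} \;\leq\; \|\mathscr{C}_\varphi\|_{S_q}^q.
\]
Substituting the pointwise lower bound forces $\sum_n n^{-q(\vartheta+\varepsilon)} < \infty$ for every $\varepsilon > 0$, so $q(\vartheta + \varepsilon) > 1$; letting $\varepsilon \to 0^+$ gives $\vartheta \geq 1/q$.

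The main obstacle is the remaining range $1 \leq q < 2$, where $q/2 < 1$ places $\mathscr{C}_\varphi^\ast \mathscr{C}_\varphi$ outside the scope of \eqref{eq:Spnec}, while the inclusion $S_q \subset S_2$ supplies only the weaker conclusion $\vartheta \geq 1/2$. Iterating through $\mathscr{C}_\varphi^k = \mathscr{C}_{\varphi^{(k)}} \in S_{q/k}$ does not obviously rescue the argument either, since transferring the $q \geq 2$ result would require $k \leq q/2 < 1$, leaving no admissible positive integer $k$. To cover $q < 2$, I would instead try to replace $(e_n)$ by a modified orthonormal basis $(u_n)$ of $\mathscr{H}^2$, concentrated through the boundary value map near $X_\varepsilon$ so that $|\langle \mathscr{C}_\varphi u_n, u_n\rangle| \gg n^{-(\vartheta+\varepsilon)}$; one could then apply \eqref{eq:Spnec} to $\mathscr{C}_\varphi$ directly at exponent $q \geq 1$. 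Ensuring that such a basis spans all of $\mathscr{H}^2$ rather than a proper subspace is the delicate technical point.
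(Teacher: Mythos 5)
Your argument for $q \geq 2$ is correct and self-contained: $\mathscr{C}_\varphi^\ast \mathscr{C}_\varphi$ is positive, lies in $S_{q/2}$ with $q/2 \geq 1$, so \eqref{eq:Spnec} applies in the standard basis and gives $\sum_n \|\mathscr{C}_\varphi e_n\|_{\mathscr{H}^2}^q < \infty$, from which the lower bound $\|\mathscr{C}_\varphi e_n\|_{\mathscr{H}^2}^2 \geq \mu_\infty(X_\varepsilon)\, n^{-2(\vartheta+\varepsilon)}$ forces $\vartheta \geq 1/q$.

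However, the remaining range $1 \leq q < 2$ is a genuine gap, and the fix you sketch does not close it. The proposal to pass to a modified orthonormal basis $(u_n)$ and apply \eqref{eq:Spnec} to $\mathscr{C}_\varphi$ directly runs into a structural obstruction you haven't addressed: $\mathscr{C}_\varphi$ is far from self-adjoint, and its matrix in the standard basis has essentially vanishing diagonal (for $c_0 \geq 2$ one has $\langle \mathscr{C}_\varphi e_n, e_n\rangle_{\mathscr{H}^2} = 0$ for every $n \geq 2$, since $\mathscr{C}_\varphi e_n$ is supported on multiples of $n^{c_0}$). Producing an orthonormal basis that both spans $\mathscr{H}^2$ and makes $|\langle \mathscr{C}_\varphi u_n, u_n\rangle|$ comparable to $n^{-\vartheta}$ is exactly the nontrivial part, and "concentrating near $X_\varepsilon$" does not explain how to beat the diagonal cancellation. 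So the concern you flag (the basis merely spanning a subspace) is not the real difficulty.

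The paper's trick, which handles all $q \geq 1$ at once, is to form the product $T = \mathscr{C}_{\varphi-\vartheta}^\ast \mathscr{C}_\varphi$ instead of $\mathscr{C}_\varphi^\ast \mathscr{C}_\varphi$. Since $\varphi - \vartheta$ is again in $\mathscr{G}_{\geq 1}$, Lemma~\ref{lem:norm1} gives $\|\mathscr{C}_{\varphi-\vartheta}^\ast\| = 1$, so the ideal property places $T$ in $S_q$ with $\|T\|_{S_q} \leq \|\mathscr{C}_\varphi\|_{S_q}$ --- no loss of an exponent. Moreover $T$ has explicitly computable diagonal entries $\langle T e_n, e_n\rangle_{\mathscr{H}^2} = \int_{\mathbb{T}^\infty} n^{\vartheta - 2\mre\varphi_0^\ast(\chi)}\,d\mu_\infty(\chi)$, and then \eqref{eq:Spnec} at exponent $q$ together with a measure-theoretic argument forces $2\mre\varphi_0^\ast(\chi) \geq \vartheta + 1/q$ almost everywhere, hence $\vartheta \geq 1/q$ via \eqref{eq:varthetaast}. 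The key insight you are missing is that pairing $\mathscr{C}_\varphi$ with $\mathscr{C}_{\varphi-\vartheta}^\ast$ (a unit-norm operator, rather than $\mathscr{C}_\varphi^\ast$ itself) preserves the Schatten index and still yields computable, nonzero diagonal entries.
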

\begin{proof}
	As in the proof of Theorem~\ref{thm:BQSest}, we exploit that $\varphi-\vartheta$ also belongs to $\mathscr{G}_{\geq1}$; by Lemma~\ref{lem:norm1} we have that $\|\mathscr{C}_{\varphi-\vartheta}^\ast\|=\|\mathscr{C}_{\varphi-\vartheta}\|=1$. Hence the ideal property \eqref{eq:ideal} implies that $\|\mathscr{C}_{\varphi-\vartheta}^\ast \mathscr{C}_\varphi\|_{S_q} \leq \|\mathscr{C}_\varphi\|_{S_q}$. We then apply \eqref{eq:Spnec} with $T=\mathscr{C}_{\varphi - \vartheta}^\ast \mathscr{C}_{\varphi}$ and $x_n = e_n$ as the standard basis of $\mathscr{H}^2$ to conclude that
	\[\sum_{n=1}^\infty \big|\langle \mathscr{C}_{\varphi - \vartheta}^\ast \mathscr{C}_{\varphi}e_n, e_n \rangle_{\mathscr{H}^2}\big|^q < \infty.\]
	Observing that
	\[\langle \mathscr{C}_{\varphi - \vartheta}^\ast \mathscr{C}_{\varphi}e_n, e_n \rangle_{\mathscr{H}^2} = \langle \mathscr{C}_{\varphi}e_n, \mathscr{C}_{\varphi - \vartheta}e_n \rangle_{\mathscr{H}^2} = \int_{\mathbb{T}^\infty} n^{\vartheta -2 \mre \varphi_0^\ast(\chi)} \,d\mu_\infty(\chi) ,\]
	a measure-theoretic argument then shows that $2 \mre \varphi_0^\ast(\chi) \geq \vartheta + 1/q$ for almost every $\chi \in \mathbb{T}^\infty$. Since $\essinf_{\chi\in\mathbb{T}^\infty} \mre \varphi_0^\ast(\chi) = \vartheta$ by \eqref{eq:varthetaast}, we conclude that $\vartheta \geq 1/q$. 
\end{proof}

Therefore only the case $\vartheta = 1/q$ is of further interest. In this setting, we have the following corollary of Theorem~\ref{thm:approxmain}. 
\begin{corollary}\label{cor:schatten} 
	Let $1 \leq q < \infty$. Suppose that $\varphi \in \mathscr{G}_{\geq1}$ with $\vartheta = 1/q$ and that $\varphi_0$ is supported on a sparse set of prime numbers $\mathbb{P}$. 
	\begin{enumerate}
		\item[(a)] If $1\leq q \leq 2$, then $\mathscr{C}_\varphi \in S_q$ if and only if $\big(\|\mathscr{C}_\varphi e_n\|_{\mathscr{H}^2}\big)_{n\geq1}\in\ell^q$.
		\item[(b)] If $2<q<\infty$ and $\mathbb{P}$ is $2/(2+q)$-sparse, then again we have that $\mathscr{C}_\varphi \in S_q$ if and only if $\big(\|\mathscr{C}_\varphi e_n\|_{\mathscr{H}^2}\big)_{n\geq1}\in\ell^q$.
	\end{enumerate}
\end{corollary}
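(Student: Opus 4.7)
The plan is to derive both equivalences by combining the approximation number estimates from Theorem~\ref{thm:approxmain} with the Schatten class characterizations \eqref{eq:Spnec}--\eqref{eq:Spsuf} applied to the standard basis $(e_n)_{n \geq 1}$ of $\mathscr{H}^2$.

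First I would handle the necessary direction, which is the same in both (a) and (b). Since any $\nu$-sparse set with $\nu \leq 1$ is automatically sparse (because $p^{-1} \leq p^{-\nu}$ for $p \geq 2$), Theorem~\ref{thm:approxmain}~(a) applies in both cases and yields $a_n(\mathscr{C}_\varphi) \geq C_1 \|\mathscr{C}_\varphi e_n\|_{\mathscr{H}^2}$. Thus if $\mathscr{C}_\varphi \in S_q$, i.e.\ $(a_n(\mathscr{C}_\varphi))_{n\geq1} \in \ell^q$, then $(\|\mathscr{C}_\varphi e_n\|_{\mathscr{H}^2})_{n\geq1} \in \ell^q$.

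For the sufficient direction in part (a), the assumption $1 \leq q \leq 2$ allows me to invoke \eqref{eq:Spsuf} directly with $x_n = e_n$: if $\sum_n \|\mathscr{C}_\varphi e_n\|_{\mathscr{H}^2}^q < \infty$, then $\mathscr{C}_\varphi \in S_q$. No appeal to Theorem~\ref{thm:approxmain}~(b) is needed here; this is fortunate, because for $\vartheta = 1/q \geq 1/2$ the assumption of plain sparseness is not in general enough to trigger the hypotheses of Theorem~\ref{thm:approxmain}~(b).

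For the sufficient direction in part (b), the point is to verify that the hypothesis $2/(2+q)$-sparse makes Theorem~\ref{thm:approxmain}~(b) applicable. Setting $\nu = 2/(2+q)$, one has $0 < \nu < 1$ since $q > 2$, and the crucial computation
\[\frac{\nu}{1-\nu} = \frac{2/(2+q)}{q/(2+q)} = \frac{2}{q} = 2\vartheta\]
shows that the borderline condition $2\vartheta \geq \nu/(1-\nu)$ holds with equality. Therefore Theorem~\ref{thm:approxmain}~(b) gives $a_n(\mathscr{C}_\varphi) \leq C_2 \|\mathscr{C}_\varphi e_n\|_{\mathscr{H}^2}$, and raising this to the $q$th power and summing finishes the proof. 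The only subtle step is the exact matching $\nu = 2/(2+q)$ with the hypothesis in Theorem~\ref{thm:approxmain}~(b); beyond this bookkeeping, the argument is a direct assembly of previously established results.
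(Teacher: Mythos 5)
Your proposal is correct and follows the same route as the paper: combine Theorem~\ref{thm:approxmain}~(a) with \eqref{eq:Spsuf} for part~(a), and combine both parts of Theorem~\ref{thm:approxmain} for part~(b), checking that $\nu = 2/(2+q)$ meets the hypothesis $2\vartheta \geq \nu/(1-\nu)$. Your explicit verification of the sparseness bookkeeping (including the remark that plain sparseness cannot trigger Theorem~\ref{thm:approxmain}~(b) when $q \leq 2$) spells out details the paper leaves implicit, but the argument is identical.
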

\begin{proof}
	If $1 \leq q \leq 2$, then the statement is a consequence of Theorem~\ref{thm:approxmain} (a) and the general sufficient condition \eqref{eq:Spsuf}. If $2<q<\infty$, then the statement follows directly from Theorem~\ref{thm:approxmain}. 
\end{proof}

When $q=2$, Theorem~\ref{thm:pline} has previously been observed by Finet, Queff\'{e}lec and Volberg \cite[p.~279]{FQV04}. We finish this section by pointing out the following curiosity of the Hilbert--Schmidt case. If $\varphi \in \mathscr{G}_{\geq1}$, then
\[\|\mathscr{C}_\varphi\|_{S_2}^2 = \sum_{n=1}^\infty \|\mathscr{C}_\varphi e_n\|_{\mathscr{H}^2}^2 = \sum_{n=1}^\infty \|\mathscr{C}_{\varphi_0} e_n\|_{\mathscr{H}^2}^2 = \|\mathscr{C}_{\varphi_0}\|_{S_2}^2.\]
In other words, $\mathscr{C}_\varphi$ is Hilbert--Schmidt if and only if $\mathscr{C}_{\varphi_0}$ is Hilbert--Schmidt. Note from the examples of Section~\ref{sec:affine} and Section~\ref{sec:angle} that the approximation numbers of $\mathscr{C}_{\varphi_0}$ tend to have much more rapid decay than those of $\mathscr{C}_\varphi$, even when $\vartheta = 1/2$. This is compensated for by the fact that if $c_0\geq1$, then $a_1(\mathscr{C}_\varphi)=\|\mathscr{C}_\varphi\|=1$, while it always holds that $\|\mathscr{C}_\varphi\| > 1$ when $c_0=0$.

\section{Restricted counting functions} \label{sec:restricted} 
We shall now begin to work towards Theorem~\ref{thm:compact}. For notational simplicity we will assume that $p=2$ throughout. We therefore consider symbols
\[\varphi(s)=c_0s + \varphi_0(s)\]
where $c_0\geq1$ and $\varphi_0(s)=\Phi(2^{-s})$ for some analytic function $\Phi \colon \mathbb{D}\to\mathbb{C}_0$. Let $\mathbb{O} = \mathscr{M}(\{2\}^\perp)$ denote the set of odd numbers. As in Section~\ref{sec:orth}, $\mathscr{H}^2$ orthogonally decomposes into the subspaces $\mathscr{H}_j^2$, for $j \in \mathbb{O}$. Each subspace $\mathscr{H}_j^2$ is comprised of elements $f(s) = j^{-s} F(2^{-s})$, where $F$ is in the Hardy space $H^2(\mathbb{D})$ of the unit disc. Note that $\mathscr{H}^2_j$ is a space of absolutely convergent Dirichlet series in $\mathbb{C}_0$, while the elements of $\mathscr{H}^2$ are generally only convergent in the smaller half-plane $\mathbb{C}_{1/2}$. Moreover, the absolute values of functions in $\mathscr{H}^2_j$ are periodic: If $s \in \mathbb{C}_0$, then 
\begin{equation}\label{eq:periodic} 
	|f(s+2\pi i /\log{2})| = |f(s)|. 
\end{equation}
From \eqref{eq:periodic} it is easy to establish the Carlson--type formula 
\begin{equation}\label{eq:carlson} 
	\|f\|_{\mathscr{H}^2}^2 = \lim_{\sigma \to 0^+} \frac{\log{2}}{2\pi} \int_{-\frac{\pi}{\log{2}}}^{\frac{\pi}{\log{2}}} |f(\sigma+it)|^2 \,dt, 
\end{equation}
valid for all $f \in \mathscr{H}^2_j$ and $j \in \mathbb{O}$. 

From \eqref{eq:carlson} and a direct computation we obtain the following Littlewood--Paley formula. The proof is very similar to those of \cite[Prop.~2]{Bayart02} and \cite[Lem.~2.2]{BP21}.
\begin{lemma}\label{lem:LP2} 
	Suppose that $f \in \mathscr{H}^2_j$ for some $j \in \mathbb{O}$. Then 
	\begin{equation}\label{eq:LP2} 
		\|f\|_{\mathscr{H}^2}^2 = |f(+\infty)|^2 + \frac{2\log{2}}{\pi}\int_{-\frac{\pi}{\log{2}}}^{\frac{\pi}{\log{2}}} \int_0^\infty |f'(\sigma+it)|^2 \,\sigma\,d\sigma dt. 
	\end{equation}
\end{lemma}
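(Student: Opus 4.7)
The plan is to combine the Carlson-type formula \eqref{eq:carlson} with an integration by parts argument exploiting the horizontal periodicity of $|f|^2$. Set $T = \pi/\log 2$, so that $|f(\sigma+it)|$ has period $2T$ in $t$ by \eqref{eq:periodic}. Since $f$ is holomorphic, $\Delta |f|^2 = 4|f'|^2$, which rearranges to $\partial_\sigma^2 |f|^2 = 4|f'|^2 - \partial_t^2 |f|^2$. Integrating in $t$ from $-T$ to $T$, the second term on the right vanishes by periodicity, leaving
\[
\int_{-T}^T \partial_\sigma^2 |f(\sigma+it)|^2\, dt = 4\int_{-T}^T |f'(\sigma+it)|^2\, dt.
\]

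The next step is to multiply this relation by $\sigma$ and integrate $\sigma$ over $[0,A]$, treating the left-hand side with a single integration by parts in $\sigma$. To ensure the boundary terms are well-defined, I would first carry out the computation with $f$ replaced by the translate $f_\tau(s) = f(s+\tau)$ for $\tau > 0$; since $F \in H^2(\mathbb{D})$, $f_\tau$ is analytic and bounded on a neighborhood of $\overline{\mathbb{C}_0}$. The integration by parts then gives
\[
4\int_0^A \!\!\int_{-T}^T |f_\tau'(\sigma+it)|^2\,\sigma\, dt\, d\sigma = \int_{-T}^T\left(A\,\partial_\sigma|f_\tau(A+it)|^2 - |f_\tau(A+it)|^2 + |f(\tau+it)|^2\right) dt.
\]

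Sending $A \to \infty$ disposes of the contributions at the right edge of the rectangle: because $f \in \mathscr{H}_j^2$ is an ordinary Dirichlet series converging absolutely in $\mathbb{C}_0$, both $f - f(+\infty)$ and $f'$ decay exponentially in $\sigma$, so the first term inside the integral vanishes in the limit and the second tends to $|f(+\infty)|^2$ uniformly in $t$. It then remains to let $\tau \to 0^+$. The left-hand side converges to $\int_0^\infty \!\int_{-T}^T |f'(u+it)|^2 u\,dt\,du$ after the change of variable $u = \sigma + \tau$ and an appeal to monotone convergence, while the boundary integral $\int_{-T}^T |f(\tau+it)|^2\,dt$ converges to $2T\|f\|_{\mathscr{H}^2}^2$ by the Carlson-type formula \eqref{eq:carlson}. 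Collecting these limits, dividing by $4$, and recalling $T = \pi/\log 2$ yields \eqref{eq:LP2}.

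The main technical obstacle is that a generic $f \in \mathscr{H}^2_j$ need not have pointwise boundary values on $\mre s = 0$, so one cannot directly apply Green's identity on a closed rectangle abutting the imaginary axis. The translation regularization $f \mapsto f_\tau$, together with the appeal to \eqref{eq:carlson} when passing to the limit $\tau \to 0^+$, is the standard remedy; once this regularization is in place the identity reduces to a routine application of $\Delta |f|^2 = 4|f'|^2$ combined with horizontal periodicity.
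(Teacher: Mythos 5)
Your proof is correct, but it takes a genuinely different route from the paper. The paper's own proof is a direct Parseval/Fourier-coefficient computation: swap the order of integration, apply the Carlson-type formula~\eqref{eq:carlson} to $f'(\cdot+\sigma)$ for each fixed $\sigma>0$ (which by orthogonality of the lacunary frequencies $j2^k$ gives $\frac{\log 2}{2\pi}\int_{-T}^{T}|f'(\sigma+it)|^2\,dt = \sum_n |a_n|^2(\log n)^2 n^{-2\sigma}$), and then integrate in $\sigma$ using the identity~\eqref{eq:coeffint}, $4\int_0^\infty n^{-2\sigma}\sigma\,d\sigma=(\log n)^{-2}$. You instead run the classical Littlewood--Paley argument: use $\Delta|f|^2=4|f'|^2$, kill the $\partial_t^2$-term via the $2\pi/\log 2$ horizontal periodicity, integrate by parts in $\sigma$, and regularize by translating into the interior before sending $A\to\infty$ and $\tau\to 0^+$ (the last step invoking~\eqref{eq:carlson}). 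Both are sound. The paper's computation is shorter here because the Dirichlet-series structure makes Parseval immediate; your Green's-identity route is the one that survives when no orthogonal expansion is available (e.g.\ in $H^p$ for $p\neq 2$) and is closer in spirit to the proofs of \cite[Prop.~2]{Bayart02} and \cite[Lem.~2.2]{BP21} that the paper cites as models. One small remark: the $\tau$-regularization is technically dispensable, since $\sigma G'(\sigma)\to 0$ as $\sigma\to 0^+$ can be checked directly from the coefficient expansion by dominated convergence; but keeping it is cleaner and avoids that estimate.
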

\begin{proof}
	Swap the order of integration in \eqref{eq:LP2} and then apply \eqref{eq:carlson} for each $\sigma>0$. The proof is finished by using the formula 
	\begin{equation}\label{eq:coeffint} 
		4 \int_0^\infty n^{-2\sigma} \,\sigma d\sigma = \frac{1}{(\log{n})^2}. \qedhere 
	\end{equation}
\end{proof}

Define the restricted counting function $\mathscr{N}_\varphi$ by 
\begin{equation}\label{eq:counting2} 
	\mathscr{N}_\varphi(w) = \sum_{\substack{s \in \varphi^{-1}(\{w\}) \\ -\pi/\log{2} \leq \mim{s} < \pi/\log{2}}} \mre{s}. 
\end{equation}
For technical reasons, we have included $\mim{s}=-\pi/\log{2}$ in the definition of $\mathscr{N}_\varphi$ (see \eqref{eq:Ndecomp} below). This only affects the value of $\mathscr{N}_\varphi$ on a set of measure zero. The following version of the Stanton formula follows immediately from Lemma~\ref{lem:LP2} and a change of variables.
\begin{lemma}\label{lem:CoV} 
	Suppose that $\varphi \in \mathscr{G}_{\geq1}$ and that $\varphi_0$ is supported on $\mathbb{P}=\{2\}$. If $f \in \mathscr{H}^2_j$ for some $j\in\mathbb{O}$, then
	\[\|\mathscr{C}_\varphi f\|_{\mathscr{H}^2}^2 = |f(+\infty)|^2 + \frac{2\log{2}}{\pi} \int_{\mathbb{C}_0} |f'(w)|^2 \, \mathscr{N}_\varphi(w)\,dw.\]
\end{lemma}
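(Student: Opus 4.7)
The plan is to apply the Littlewood--Paley formula of Lemma~\ref{lem:LP2} to the Dirichlet series $\mathscr{C}_\varphi f$ and then perform the holomorphic change of variables $w = \varphi(s)$ in the resulting area integral, so that the restricted counting function $\mathscr{N}_\varphi$ emerges as the fibre-counting weight.

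First I would verify that Lemma~\ref{lem:LP2} is applicable to $\mathscr{C}_\varphi f$. By Lemma~\ref{lem:Csum}, $\mathscr{C}_\varphi$ sends $\mathscr{H}^2_j$ into $\mathscr{H}^2_{j^{c_0}}$, and $j^{c_0}$ is odd since $j$ is odd, so Lemma~\ref{lem:LP2} yields
\[\|\mathscr{C}_\varphi f\|_{\mathscr{H}^2}^2 = |(\mathscr{C}_\varphi f)(+\infty)|^2 + \frac{2\log 2}{\pi} \int_{-\pi/\log 2}^{\pi/\log 2} \int_0^\infty |(\mathscr{C}_\varphi f)'(\sigma + it)|^2\, \sigma\, d\sigma\, dt.\]
The estimate $\mre \varphi(s) \geq c_0 \mre s + \vartheta \to +\infty$ (valid because $c_0 \geq 1$) combined with continuity gives $(\mathscr{C}_\varphi f)(+\infty) = f(+\infty)$, producing the first term on the right-hand side. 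The chain rule $(\mathscr{C}_\varphi f)'(s) = f'(\varphi(s))\,\varphi'(s)$ rewrites the double integral as
\[\int_U |f'(\varphi(s))|^2\, |\varphi'(s)|^2\, \mre s\, dA(s), \qquad U = \left\{s \in \mathbb{C}_0 \,:\, -\pi/\log 2 \leq \mim s < \pi/\log 2\right\},\]
the half-open fundamental strip for the period $2\pi i/\log 2$ of $\varphi_0$.

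Next I would invoke the standard weighted change-of-variables formula for holomorphic maps. Since the Jacobian of $\varphi$ regarded as a map $\mathbb{R}^2 \to \mathbb{R}^2$ equals $|\varphi'|^2$, counting fibres inside $U$ with weight $\mre s$ gives
\[\int_U |f'(\varphi(s))|^2\, |\varphi'(s)|^2\, \mre s\, dA(s) = \int_{\mathbb{C}_0} |f'(w)|^2 \left(\sum_{s \in \varphi^{-1}(\{w\}) \cap U} \mre s\right) dA(w),\]
in which the inner sum is exactly $\mathscr{N}_\varphi(w)$ by \eqref{eq:counting2}. Substituting this into the Littlewood--Paley identity yields the claimed Stanton formula.

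The only point needing care is the justification of the change of variables. Since $\varphi$ is holomorphic and nonconstant, its critical set is discrete and hence of area zero; by Sard's theorem the set of critical values in $\mathbb{C}_0$ also has area zero, and off this negligible set $\varphi$ is a local biholomorphism, so the fibre-counting formula follows by a routine partition-of-unity argument on $U$. The half-open convention built into the definition of $\mathscr{N}_\varphi$ matches the fundamental domain $U$ precisely, so translates of a preimage by integer multiples of $2\pi i/\log 2$ are neither counted twice nor missed. Everything else is bookkeeping of constants.
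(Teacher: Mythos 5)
Your proposal is correct and follows exactly the paper's route: apply Lemma~\ref{lem:LP2} to $\mathscr{C}_\varphi f$ (justified via Lemma~\ref{lem:Csum}), then perform the non-injective change of variable $w=\varphi(s)$ to produce the restricted counting function, and use $\varphi(+\infty)=+\infty$ to identify the constant term. The paper compresses the change-of-variables step into a citation to Shapiro's book, whereas you spell out the Jacobian, critical-set, and fundamental-domain considerations; the substance is identical.
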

\begin{proof}
	We can apply Lemma~\ref{lem:LP2}, since $\mathscr{C}_\varphi f \in \mathscr{H}^2_{j^{c_0}}$ by Lemma~\ref{lem:Csum}. One obtains the desired formula by making the non-injective change of variable $w = \varphi(s)$ in the usual manner (see e.g.~\cite[Sec.~10.3]{Shapiro93}). Note also that $\varphi(+\infty) = +\infty$. 
\end{proof}

Our next goal is to obtain a version of the Littlewood--type inequality \eqref{eq:NLest} for the restricted counting function. Our main tool will be the classical Littlewood inequality for the Hardy space of the unit disc (see e.g.~\cite[Sec.~10.4]{Shapiro93}). Recall that if $\psi$ is an analytic self-map of the unit disc $\mathbb{D}$, then the Nevanlinna counting function is defined as
\[N_\psi^{\mathbb{D}}(\eta) = \sum_{z \in \psi^{-1}(\{\eta\})} \log{\frac{1}{|z|}}\]
for $\eta \in \mathbb{D}\setminus\{\psi(0)\}$. The Littlewood--type inequality for $N_\psi^{\mathbb{D}}$ takes the form 
\begin{equation}\label{eq:DLest} 
	N_\psi^{\mathbb{D}}(\eta) \leq \log\left|\frac{1-\overline{\eta}\psi(0)}{\eta-\psi(0)}\right| 
\end{equation}
for $\eta \neq \varphi(0)$. The proof of the following result is inspired by \cite[Lem.~2.4]{BP21}.
\begin{lemma}\label{lem:IAmTheta} 
	Suppose that $\varphi \in \mathscr{G}_{\geq1}$ and that $\varphi_0$ is supported on $\mathbb{P}=\{2\}$. There is a constant $C=C(\varphi)$ such that
	\[\mathscr{N}_\varphi(w) \leq C \frac{\mre{w}}{1+(\mim{w})^2}\]
	for $0<\mre{w} \leq c_0\pi/\log{2}$. 
\end{lemma}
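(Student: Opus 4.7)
The plan is to transfer the counting to the unit disc via $z = 2^{-s}$, using the principal branch of $\log$. With $\Phi\colon \mathbb{D}\to\mathbb{C}_0$ defined by $\varphi_0(s) = \Phi(2^{-s})$, each solution $s_j = \sigma_j + it_j$ of $\varphi(s) = w$ in the fundamental strip $-\pi/\log{2} \leq \mim{s} < \pi/\log{2}$ corresponds bijectively to $z_j := 2^{-s_j} \in \mathbb{D}\setminus\{0\}$ solving $-c_0 \log{z_j}/\log{2} + \Phi(z_j) = w$. Separating real and imaginary parts yields $\sigma_j = -\log|z_j|/\log{2}$, together with $\mre{\Phi(z_j)} = \mre{w} - c_0 \sigma_j$ and $\mim{\Phi(z_j)} = \mim{w} + c_0 (\arg{z_j})/\log{2}$, with principal $\arg{z_j} \in (-\pi,\pi]$. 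In particular, every $z_j$ is a zero of $G - \eta$, where $G(z) := z\, e^{-\Phi(z)\log{2}/c_0}$ is a self-map of $\mathbb{D}$ with $G(0)=0$ and $\eta := e^{-w\log{2}/c_0}$.

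Littlewood's inequality \eqref{eq:DLest} applied to $G$ immediately gives the rough bound $\mathscr{N}_\varphi(w) \leq \log(1/|\eta|)/\log{2} = \mre{w}/c_0$, which is just an instance of \eqref{eq:NLest}. This settles the lemma whenever $|\mim{w}| \leq V_0$ for any fixed $V_0$, since then $\mre{w}/c_0 \ll \mre{w}/(1+(\mim{w})^2)$. The real content is the decay for $V := |\mim{w}|$ large.

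For such $w$, the constraint $|\arg{z_j}| \leq \pi$ combined with the imaginary-part equation forces $|\mim{\Phi(z_j)}| \geq V - c_0\pi/\log{2} \geq V/2$ once $V$ is large enough. Applying Schwarz--Pick to $\Phi\colon \mathbb{D}\to \mathbb{C}_0$ and using the explicit formula for $d_{\mathbb{C}_0}$ between $\Phi(z_j)$ (imaginary part of order $V$, real part at most $\mre{w}$) and $\Phi(0)$ then yields the pointwise estimate $1 - |z_j| \ll \mre{w}/V^2$, and hence $\sigma_j \ll \mre{w}/V^2$ uniformly in $j$. In particular all preimages $z_j$ lie in the thin annulus $\{1 - c\mre{w}/V^2 < |z| < 1\}$ for some constant $c = c(\varphi_0)$.

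The main obstacle is upgrading this pointwise bound to a bound on the sum $\sum_j \sigma_j = \mathscr{N}_\varphi(w)$, because the number of preimages is not a priori bounded. My plan is to apply Jensen's formula to $G - \eta$ on the sub-disc $r\mathbb{D}$ with $r = 1 - c\mre{w}/V^2$ and exploit the boundary data $|G^\ast(e^{i\theta})| = \exp(-\mre{\Phi^\ast(e^{i\theta})}\log{2}/c_0)$. The required decay in $V$ comes from observing that $|G^\ast(e^{i\theta}) - \eta|$ is close to its trivial upper bound $1+|\eta|$ except on the boundary arcs where $\arg{G^\ast}$ is near $\arg{\eta}$, and a Poisson-kernel argument controls the total length of these arcs by the target factor $\mre{w}/(1+(\mim{w})^2)$. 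These boundary estimates are of the same type as the subordination arguments developed in \cite{BP21} for symbols in $\mathscr{G}_0$, which the text cites as the source of inspiration for the proof of this lemma.
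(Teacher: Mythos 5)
Your reduction to the disc via $z = 2^{-s}$ and the Schwarz--Pick pointwise bound $\sigma_j \ll \mre{w}/V^2$ are fine, but the Jensen step does not close the argument. The zero set of $G - \eta$ in $\mathbb{D}$ is strictly larger than $\{2^{-s}: s\in\varphi^{-1}(\{w\}),\; |\mim s|<\pi/\log 2\}$: since $\eta = e^{-w\log 2/c_0}$ is $2\pi i c_0/\log 2$-periodic in $w$, a point $z\in\mathbb{D}$ with $G(z)=\eta$ corresponds via $s=-\log z/\log 2$ to a preimage in the fundamental strip of $w + 2\pi i m c_0/\log 2$ for \emph{some} integer $m$, the branch ambiguity of the argument. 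Consequently Jensen or Littlewood applied to $G-\eta$ controls $\sum_{G(z)=\eta}\log(1/|z|)/\log 2 = \sum_{m}\mathscr{N}_{\varphi,m}(w) = N_\varphi(w)$, i.e.\ it just recovers Bayart's estimate \eqref{eq:NLest}; it cannot isolate the single term $\mathscr{N}_\varphi=\mathscr{N}_{\varphi,0}$, and so cannot produce the $1/(1+(\mim w)^2)$ gain. There is also a secondary inconsistency: by your own pointwise bound the preimages you want live in the thin annulus $\{1-c\mre w/V^2 < |z| < 1\}$, so they are not inside the sub-disc $r\mathbb{D}$ with $r=1-c\mre w/V^2$ on which you propose to apply Jensen.

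The paper's proof avoids the periodization altogether by not exponentiating in $w$. It takes a Riemann map $\Theta_T$ of $\mathbb{D}$ onto a half-strip of finite height $2T$ containing the fundamental strip, forms $\psi(z) = (\varphi(\Theta_T(z)) - w)/(\varphi(\Theta_T(z)) + \overline{w})$, and applies the disc Littlewood inequality \eqref{eq:DLest} at $\eta=0$. Since $\xi\mapsto(\xi-w)/(\xi+\overline{w})$ is a M\"obius bijection of $\mathbb{C}_0$ onto $\mathbb{D}$, the zeros of $\psi$ are \emph{exactly} the preimages of $w$ in that half-strip, with no spurious copies. A Kellogg--Warschawski type regularity estimate converts $\sum\log(1/|\Theta_T^{-1}(s)|)$ into $\sum\mre s$ on the inner part of the strip, and the decay in $\mim w$ comes from the pseudo-hyperbolic bound $\log|1/\psi(0)| \leq 2\mre\varphi(T)\mre w/|\varphi(T)-w|^2$, whose denominator grows like $(\mim w)^2$ once $T$ is fixed. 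If you want to pursue a disc-side argument, you would need to replace $z=2^{-s}$ by a conformal map of the \emph{half-strip} onto $\mathbb{D}$ (as the paper does) rather than of the whole half-plane, precisely to keep the target-point map $w\mapsto\eta$ injective.
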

\begin{proof}
	Let $\Theta \colon \mathbb{D} \to S$ denote the Riemann map of $\mathbb{D}$ onto the half-strip
	\[S = \{s = \sigma+it \,:\, \sigma>0, -1<t<1\},\]
	normalized so that $\Theta(0) = 1$, $\Theta'(0) > 0$, and for $T > 0$, let $\Theta_T = T \Theta$. For any $w \in \mathbb{C}_0$ and any $T>0$, the function
	\[\psi(z) = \frac{\varphi(\Theta_T(z))-w}{\varphi(\Theta_T(z))+\overline{w}}\]
	is an analytic map from $\mathbb{D}$ to $\mathbb{D}$. Fix a number $T \geq 2\mre{w}/c_0$. Then
	\[\mre{\varphi(\Theta_T(0))}= \mre{\varphi(T)} \geq 2\mre{w},\]
	since $\mre{\varphi_0(s)}\geq0$ for every $s \in \mathbb{C}_0$. Hence it is evident that $\psi(0)\neq0$. Using \eqref{eq:DLest} with $\eta=0$ we find that
	\[\sum_{z \in \psi^{-1}(\{0\})} \log{\frac{1}{|z|}} \leq \log{\frac{1}{|\psi(0)|}}.\]
	Noting that $\psi(z) = 0$ if and only if $\varphi(\Theta_T(z)) = w$, and substituting $s = \Theta_T(z)$, we rewrite this estimate as 
	\begin{equation}\label{eq:RL1} 
		\sum_{s \in \varphi^{-1}(\{w\})} \log{\frac{1}{|\Theta_T^{-1}(s)|}} \leq \log\left|\frac{\varphi(T)+\overline{w}}{\varphi(T)-w}\right|. 
	\end{equation}
	By standard regularity results for conformal maps there is an absolute constant $C > 0$ such that
	\[\mre s \leq C T \log{\frac{1}{|\Theta^{-1}_T(s)|}}\]
	whenever $|\mim{s}| \leq T/2$ and $0<\mre{s} \leq T/2$. Since $\mre{\varphi_0(s)\geq0}$ for every $s\in\mathbb{C}_0$, we of course have that if $\varphi(s)=w$, then $\mre{s}\leq \mre{w}/c_0$. This implies that if $T \geq 2\max(\mre{w}/c_0,\pi/\log{2})$, then 
	\begin{equation}\label{eq:RL2} 
		\mathscr{N}_\varphi(w) \leq \sum_{\substack{s \in \varphi^{-1}(\{w\}) \\ |\mim{s}| \leq T/2 \\ 0<\mre{s} \leq T/2}} \mre{s} \leq C T \sum_{s \in \varphi^{-1}(\{w\})} \log{\frac{1}{|\Theta_T^{-1}(s)|}}. 
	\end{equation}
	Noting that both $\varphi(T)$ and $w$ are in the half-plane $\mathbb{C}_0$, a basic estimate for the pseudo-hyperbolic metric (see e.g.~\cite[Lem.~2.3]{BP21}) yields that 
	\begin{equation}\label{eq:RL3} 
		\log\left|\frac{\varphi(T)+\overline{w}}{\varphi(T)-w}\right| \leq \frac{2\mre{\varphi(T)}\mre{w}}{|\varphi(T)-w|^2}. 
	\end{equation}
	Combining \eqref{eq:RL1}, \eqref{eq:RL2}, and \eqref{eq:RL3}, and recalling that $\mre{\varphi(T)} \geq 2\mre{w}$, we conclude that
	\[\mathscr{N}_\varphi(w) \leq 2C T \frac{\mre{\varphi(T)}\mre{w}}{\frac{(\mre{\varphi(T)})^2}{4}+\left(\mim{\varphi(T)}-\mim{w}\right)^2,}\]
	as long as $T \geq 2\max(\mre{w}/c_0,\pi/\log{2})$. For the purposes of the present lemma, where we restrict our attention to $0<\mre{w} \leq c_0\pi/\log{2}$, it is sufficient to choose $T = 2\pi/\log{2}$. 
\end{proof}

While Theorem~\ref{thm:compact} is stated in terms of the Nevanlinna counting function $N_\varphi$, we shall prove it via the restricted counting function $\mathscr{N}_\varphi$, which is natural in view of Lemma~\ref{lem:CoV}. To bridge the gap, the remainder of this section is devoted to the following result.
\begin{theorem}\label{thm:Nevandecomp} 
	Suppose that $\varphi \in \mathscr{G}_{\geq1}$ and that $\varphi_0$ is supported on $\mathbb{P}=\{2\}$. Let $N_\varphi$ denote the Nevanlinna counting function \eqref{eq:nevanlinna} and $\mathscr{N}_\varphi$ the restricted counting function \eqref{eq:counting2}. Then
	\[\lim_{\mre{w}\to 0^+} \frac{N_\varphi(w)}{\mre{w}} = 0 \qquad \iff \qquad \lim_{\mre{w}\to 0^+} \frac{\mathscr{N}_\varphi(w)}{\mre{w}}=0.\]
\end{theorem}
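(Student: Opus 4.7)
My plan is to exploit the periodicity $\varphi_0(s + 2\pi i/\log 2) = \varphi_0(s)$, which holds because $\varphi_0(s) = \Phi(2^{-s})$ when $\varphi_0$ is supported on $\mathbb{P} = \{2\}$. Setting $a = 2\pi c_0/\log 2$, this yields $\varphi(s + 2\pi i k/\log 2) = \varphi(s) + iak$ for every $k \in \mathbb{Z}$. Partitioning $\varphi^{-1}(\{w\})$ according to the horizontal strip of width $2\pi/\log 2$ in which $\mim s$ lies, and translating each strip back to the fundamental one $[-\pi/\log 2, \pi/\log 2)$ while noting that $\mre s$ is preserved, I would obtain the exact identity
\[ N_\varphi(w) = \sum_{k \in \mathbb{Z}} \mathscr{N}_\varphi(w - iak). \]

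Since all terms in this decomposition are nonnegative, $\mathscr{N}_\varphi(w) \leq N_\varphi(w)$, which immediately handles the direction $N_\varphi(w)/\mre{w} \to 0 \implies \mathscr{N}_\varphi(w)/\mre{w} \to 0$. For the converse, I interpret both limits uniformly in $\mim{w}$, as is standard for boundary behavior of this kind and consistent with the compactness characterization targeted in Theorem~\ref{thm:compact}. The decomposition shows that $N_\varphi$ is periodic in $\mim{w}$ with period $a$, so it suffices to bound $N_\varphi(w)/\mre{w}$ uniformly for $|\mim{w}| \leq a/2$. Fix $\varepsilon > 0$ and split the sum into a head $|k| \leq K$ and a tail $|k| > K$. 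Lemma~\ref{lem:IAmTheta} (applicable once $\mre{w} \leq c_0\pi/\log 2$) yields
\[ \frac{1}{\mre{w}}\sum_{|k|>K}\mathscr{N}_\varphi(w-iak) \leq C \sum_{|k|>K} \frac{1}{1+(\mim{w}-ak)^2} \leq C' \sum_{|k|>K}\frac{1}{1+k^2}, \]
where the last estimate uses $|\mim{w}| \leq a/2$ to absorb the shift; the right-hand side can be made smaller than $\varepsilon/2$ by taking $K$ large, independently of $w$. The head consists of only $2K+1$ terms, each of which is $o(\mre{w})$ uniformly by hypothesis (applied with $w' = w - iak$, whose real part equals $\mre{w}$), so it is below $(\varepsilon/2)\mre{w}$ once $\mre{w}$ is small.

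The main obstacle is the head estimate, which depends on the uniform-in-$\mim{w}$ interpretation of the hypothesis. Were one to read the hypothesis only pointwise, Lemma~\ref{lem:IAmTheta} would still force $\mathscr{N}_\varphi(w')/\mre{w'}$ to be small whenever $|\mim{w'}|$ is large, so a standard diagonal argument would promote pointwise to uniform convergence over the bounded set of imaginary parts $\mim{w} - ak$ with $|k| \leq K$. Combining the head and tail bounds then gives $N_\varphi(w)/\mre{w} < \varepsilon$ for $\mre{w}$ sufficiently small, completing the proof.
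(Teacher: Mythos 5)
Your argument is correct and follows essentially the same route as the paper: decompose $N_\varphi$ over horizontal strips, control the tail via Lemma~\ref{lem:IAmTheta} applied to translated points, and apply the hypothesis to the finitely many head terms. The identity $\mathscr{N}_{\varphi,k}(w)=\mathscr{N}_\varphi(w-iak)$ and the resulting periodicity $N_\varphi(w+ia)=N_\varphi(w)$ are a tidy repackaging of the paper's Lemmas~\ref{lem:IAmTheta2}, \ref{lem:shiftit}, and~\ref{lem:Nevank}, but they encode the same content. One small remark: the hand-wringing in your final paragraph about a possible ``pointwise'' reading of the hypothesis is unnecessary. The limit $\lim_{\mre w\to 0^+} \mathscr{N}_\varphi(w)/\mre w = 0$ is, as written and as used throughout the paper, a limit over all $w$ with small real part, i.e.\ already uniform in $\mim w$; and the diagonal-argument sketch you offer for promoting a hypothetical pointwise statement to a uniform one over a bounded $\mim w$-range would not go through without an extra equicontinuity-type ingredient. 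Since the intended reading requires no such promotion, the main argument is fine as it stands.
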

Three preliminary results are required for the proof of Theorem~\ref{thm:Nevandecomp}. We first decompose the Nevanlinna counting function as 
\begin{equation}\label{eq:Ndecomp} 
	N_\varphi(w) = \sum_{k\in \mathbb{Z}} \mathscr{N}_{\varphi,k}(w), 
\end{equation}
where $\mathscr{N}_{\varphi,k}(w)$ denotes the restricted counting function
\[\mathscr{N}_{\varphi,k}(w) = \sum_{s \in \varphi^{-1}(\{w\}) \cap S_k} \mre{s}\]
corresponding to the the half-strip
\[S_k = \left\{s \in \mathbb{C}_0 \,:\, -\frac{\pi}{\log{2}} \leq \mim{s}- \frac{2\pi k}{\log{2}} < \frac{\pi}{\log{2}}\right\}.\]
Note in particular that $\mathscr{N}_{\varphi,0}=\mathscr{N}_\varphi$. We begin our study of $\mathscr{N}_{\varphi,k}$ with the following Littlewood--type estimate.
\begin{lemma}\label{lem:IAmTheta2} 
	Suppose that $\varphi \in \mathscr{G}_{\geq1}$ and that $\varphi_0$ is supported on $\mathbb{P}=\{2\}$. There is a constant $C=C(\varphi)> 0$ such that the estimate
	\[\mathscr{N}_{\varphi,k}(w) \leq C \frac{\mre w}{1 + \big|\mim{w}-c_0 \frac{2\pi k}{\log{2}}\big|^2}\]
	holds uniformly for for $0<\mre{w} \leq c_0\pi/\log{2}$ and $k\in\mathbb{Z}$. 
\end{lemma}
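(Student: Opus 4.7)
The plan is to reduce Lemma~\ref{lem:IAmTheta2} to the already-proven Lemma~\ref{lem:IAmTheta} by exploiting the vertical periodicity of $\varphi_0$. Since $\varphi_0$ is supported on the single prime $p = 2$, we may write $\varphi_0(s) = \Phi(2^{-s})$, and so $\varphi_0(s + 2\pi i/\log{2}) = \varphi_0(s)$ for all $s \in \mathbb{C}_0$.

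First, I would make the vertical translation $\tilde s = s - 2\pi i k/\log{2}$. By the half-open convention chosen for the strips, $s \in S_k$ if and only if $\tilde s \in S_0$. Setting $\tau_k = c_0 \cdot 2\pi k/\log{2}$, the periodicity of $\varphi_0$ gives
\[
\varphi(\tilde s) = c_0 \tilde s + \varphi_0(\tilde s) = c_0 s + \varphi_0(s) - i\tau_k = \varphi(s) - i\tau_k.
\]
The map $s \mapsto \tilde s$ is therefore a real-part-preserving bijection between $\varphi^{-1}(\{w\}) \cap S_k$ and $\varphi^{-1}(\{w - i\tau_k\}) \cap S_0$. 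Summing the real parts yields the identity
\[
\mathscr{N}_{\varphi, k}(w) = \mathscr{N}_\varphi(w - i\tau_k).
\]

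Second, I would apply Lemma~\ref{lem:IAmTheta} to the point $w' = w - i\tau_k$. Because $\mre w' = \mre w$, the hypothesis $0 < \mre w \leq c_0 \pi/\log{2}$ transfers verbatim, and $\mim w' = \mim w - \tau_k$ produces exactly the denominator $1 + |\mim w - c_0 \cdot 2\pi k/\log{2}|^2$ claimed in the statement. The constant obtained is the one from Lemma~\ref{lem:IAmTheta}, which depends only on $\varphi$, so uniformity in $k \in \mathbb{Z}$ is automatic.

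There is no serious obstacle here once the periodicity identity is noted; the only matter requiring a moment of care is checking that the half-open strip convention used to define $S_k$ lines up exactly with that of $S_0$ under translation, which it does by construction.
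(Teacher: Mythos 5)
Your proof is correct and is essentially the same as the paper's: both translate by the period to send $S_k$ to $S_0$, use the periodicity of $\varphi_0$ to conclude $\mathscr{N}_{\varphi,k}(w) = \mathscr{N}_{\varphi,0}(w - i c_0\, 2\pi k/\log 2)$, and then invoke Lemma~\ref{lem:IAmTheta}.
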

\begin{proof}
	If $\varphi(w)=s$ for some $s \in S_k$, then the periodicity of $\varphi_0$ implies that
	\[\varphi\left(s-\frac{2\pi i}{\log{2}} k \right) = w - c_0 \frac{2\pi i}{\log{2}} k = \widetilde{w}.\]
	Hence we get from Lemma~\ref{lem:IAmTheta} that
	\[\mathscr{N}_{\varphi,k}(w) = \mathscr{N}_{\varphi,0}\left(\widetilde{w}\right) \leq C \frac{\mre{\widetilde{w}}}{1+|\mim{\widetilde{w}}|^2}. \qedhere\]
\end{proof}

For $0<\mre{w}\leq c_0 \pi/\log{2}$, we can combine Lemma~\ref{lem:IAmTheta2} and \eqref{eq:Ndecomp} to obtain a less precise version of Bayart's estimate \eqref{eq:NLest}. Specifically,
\[N_\varphi(w) = \sum_{k\in \mathbb{Z}} \mathscr{N}_{\varphi,k}(w) \leq C\sum_{k\in \mathbb{Z}} \frac{\mre w}{1 + \big|\mim{w}-c_0 \frac{2\pi k}{\log{2}}\big|^2} \leq \widetilde{C} \mre{w}.\]
It is reasonable to expect that the main contribution to $N_\varphi(w)$ in the decomposition \eqref{eq:Ndecomp} arises from the $k$ such that $w/c_0 \in S_k$. This is the main idea in the proof of Theorem~\ref{thm:Nevandecomp}. Before proceeding, we record the following simple fact.
\begin{lemma}\label{lem:shiftit} 
	Suppose that $\varphi \in \mathscr{G}_{\geq1}$ and that $\varphi_0$ is supported on $\mathbb{P}=\{2\}$. Let $j_1,j_2,k_1,k_2$ be integers satisfying the equation $k_1-j_1 = k_2 -j_2$. For every $w_1$ such that $w_1/c_0 \in S_{j_1}$, there is a $w_2$ such that $w_2/c_0 \in S_{j_2}$, $\mre w_2 = \mre w_1$, and
	\[\mathscr{N}_{\varphi,k_1}(w_1)= \mathscr{N}_{\varphi,k_2}(w_2).\]
\end{lemma}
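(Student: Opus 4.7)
The plan is to translate vertically by an integer multiple of $2\pi i/\log 2$ and exploit the periodicity of $\varphi_0$. Since $\varphi_0(s) = \Phi(2^{-s})$, the function $\varphi_0$ has period $2\pi i/\log 2$, and therefore for any integer $\ell$,
\[\varphi\bigl(s + \tfrac{2\pi i\ell}{\log 2}\bigr) = c_0 s + c_0 \tfrac{2\pi i \ell}{\log 2} + \varphi_0(s) = \varphi(s) + c_0 \tfrac{2\pi i \ell}{\log 2}.\]

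Given $w_1$ with $w_1/c_0 \in S_{j_1}$, I would set $\ell = k_2-k_1$ and define
\[w_2 = w_1 + c_0\,\tfrac{2\pi i\, (k_2-k_1)}{\log 2}.\]
Then $\mre w_2 = \mre w_1$ automatically, and I would verify that $w_2/c_0 \in S_{j_2}$ using the hypothesis $k_1-j_1 = k_2-j_2$: indeed, $\mim(w_2)/c_0 - \tfrac{2\pi j_2}{\log 2}$ equals $\mim(w_1)/c_0 - \tfrac{2\pi j_1}{\log 2} + \tfrac{2\pi}{\log 2}\bigl((k_2-k_1)-(j_2-j_1)\bigr)$, and the second term vanishes, so $w_2/c_0$ lies in the same horizontal strip relative to $S_{j_2}$ as $w_1/c_0$ does relative to $S_{j_1}$.

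Next I would show that the map $s \mapsto s + \tfrac{2\pi i (k_2-k_1)}{\log 2}$ is a bijection between $\varphi^{-1}(\{w_1\}) \cap S_{k_1}$ and $\varphi^{-1}(\{w_2\}) \cap S_{k_2}$. The half-strips $S_{k_1}$ and $S_{k_2}$ are translates of each other by exactly $\tfrac{2\pi i(k_2-k_1)}{\log 2}$ (by their definition), so the translation carries $S_{k_1}$ onto $S_{k_2}$ bijectively; combined with the identity $\varphi(s+\tfrac{2\pi i(k_2-k_1)}{\log 2}) = \varphi(s) + c_0\tfrac{2\pi i (k_2-k_1)}{\log 2} = w_2$ when $\varphi(s)=w_1$, this yields the required bijection. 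The inverse is given by the opposite translation, so the correspondence is exact. Since the translation preserves real parts, summing $\mre s$ over either set gives the same value, which is the desired identity $\mathscr{N}_{\varphi,k_1}(w_1) = \mathscr{N}_{\varphi,k_2}(w_2)$.

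There is essentially no obstacle here; the lemma is a bookkeeping statement about how the condition $k_1-j_1 = k_2-j_2$ is exactly what is needed to keep the pre-image point $w_j/c_0$ in the correct strip after the shift. The only care required is to check the half-open normalization $[-\pi/\log 2, \pi/\log 2)$ is respected by the translation (which it is, since the translation by $2\pi i(k_2-k_1)/\log 2$ maps the half-open strip exactly to the correspondingly-shifted half-open strip).
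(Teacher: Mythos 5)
Your proof is correct and follows essentially the same approach as the paper: both define $w_2$ by the same vertical shift (note $c_0(k_2-k_1) = c_0(j_2-j_1)$ under the hypothesis, so your $w_2$ and the paper's agree), and both use periodicity of $\varphi_0$ together with the translation $s \mapsto s + 2\pi i (k_2-k_1)/\log 2$. The only cosmetic difference is that you argue the translation is a bijection between the two fibre sets directly, whereas the paper establishes one inequality and then invokes symmetry; your phrasing in terms of a bijection implicitly carries multiplicities along (which the paper makes explicit), so this is equivalent.
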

\begin{proof}
	Given $w_1$, define
	\[w_2 = w_1 + c_0 (j_2-j_1) \frac{2\pi i }{\log{2}}.\]
	Clearly $w_2/c_0 \in S_{j_2}$ and $\mre{w_2}=\mre{w_1}$. Consider $s_1 \in S_{k_1}$ such that $\varphi(s_1)=w_1$. Define
	\[s_2 = s_1+(k_2-k_1)\frac{2\pi i}{\log{2}}.\]
	If $k_1-j_1= k_2 - j_2$, then $k_2-k_1 = j_2-j_1$, and thus $\varphi(s_2) = w_2$ (with the same multiplicity as $\varphi(s_1)=w_1$), by the periodicity of $\varphi_0$. This demonstrates that
	\[\mathscr{N}_{\varphi,k_1}(w_1) \leq \mathscr{N}_{\varphi,k_2}(w_2),\]
	but by symmetry we also have the reverse estimate. 
\end{proof}

A direct consequence of Lemma~\ref{lem:shiftit} is that the property $\mathscr{N}_{\varphi,k}(w) = o(\mre w)$ does not depend on $k$.
\begin{lemma}\label{lem:Nevank} 
	Suppose that $\varphi \in \mathscr{G}$ with $c_0 \geq 1$ and that $\varphi_0$ is supported on $\mathbb{P}=\{2\}$. Fix $k \in \mathbb{Z}$. Then
	\[\lim_{\mre{w}\to 0^+} \frac{\mathscr{N}_{\varphi,k}(w)}{\mre{w}} = 0 \qquad \iff \qquad \lim_{\mre{w}\to 0^+} \frac{\mathscr{N}_\varphi(w)}{\mre{w}}=0.\]
\end{lemma}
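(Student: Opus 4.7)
The plan is to observe that Lemma~\ref{lem:shiftit} reduces the behaviour of $\mathscr{N}_{\varphi,k}$ to that of $\mathscr{N}_{\varphi,0}=\mathscr{N}_\varphi$ via a purely vertical translation of the argument, after which the equivalence becomes a formality. Morally, this reflects the $2\pi i/\log{2}$-periodicity of $\varphi_0$: shifting a preimage $s$ by $2\pi i/\log{2}$ shifts its image $\varphi(s)$ by $c_0 \cdot 2\pi i/\log{2}$, so the counting functions on consecutive horizontal strips agree after translating $w$.

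Fix $k \in \mathbb{Z}$ and any $w \in \mathbb{C}_0$. Let $j_1$ be the unique integer with $w/c_0 \in S_{j_1}$, and apply Lemma~\ref{lem:shiftit} with $k_1 = k$, $k_2 = 0$, and $j_2 = j_1 - k$, so that the balance condition $k_1 - j_1 = k_2 - j_2$ is satisfied. The lemma then supplies a point $w_2$ with $\mre{w_2} = \mre{w}$ and $\mathscr{N}_{\varphi,k}(w) = \mathscr{N}_{\varphi,0}(w_2)$. Tracing through the explicit construction of $w_2$ in the proof of Lemma~\ref{lem:shiftit}, one sees that $w_2 = w - c_0 k \cdot 2\pi i/\log{2}$; that is, a single $w$-independent vertical shift identifies $\mathscr{N}_{\varphi,k}$ with $\mathscr{N}_{\varphi,0}$.

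Dividing by $\mre{w} = \mre{w_2}$ yields the pointwise identity
\[
\frac{\mathscr{N}_{\varphi,k}(w)}{\mre{w}} \;=\; \frac{\mathscr{N}_{\varphi,0}\bigl(w - c_0 k \cdot 2\pi i/\log{2}\bigr)}{\mre\bigl(w - c_0 k \cdot 2\pi i/\log{2}\bigr)}.
\]
Since the statement $\lim_{\mre{w}\to 0^+} g(w)/\mre{w} = 0$ is, by its very formulation, uniform in $\mim{w}$, translating $\mim{w}$ by any fixed real constant leaves the limiting behaviour unchanged. Consequently, the left-hand quotient tends to zero as $\mre{w}\to 0^+$ if and only if the right-hand one does, which is precisely the claimed equivalence. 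The argument is entirely symmetric in the roles of $k$ and $0$, so no substantive obstacle arises; the only point to handle carefully is the interpretation of the limit as being uniform in $\mim{w}$, which is standard.
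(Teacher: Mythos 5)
Your proof is correct and follows exactly the route the paper indicates: you invoke Lemma~\ref{lem:shiftit} with $k_1 = k$, $k_2 = 0$ to obtain the pointwise identity $\mathscr{N}_{\varphi,k}(w) = \mathscr{N}_{\varphi,0}\bigl(w - c_0 k \cdot 2\pi i/\log 2\bigr)$ (which is the same vertical-translation identity already used in the proof of Lemma~\ref{lem:IAmTheta2}), and since that translation preserves $\mre w$ and bijects $\mathbb{C}_0$ onto itself, the two limit statements are equivalent. This is precisely what the paper means by calling the lemma ``a direct consequence of Lemma~\ref{lem:shiftit}.''
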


We now prove the main result of this section.
\begin{proof}
	[Prood of Theorem~\ref{thm:Nevandecomp}] The implication $\implies$ is trivial since $\mathscr{N}_\varphi(w) \leq N_\varphi(w)$. For the converse implication $\impliedby$, we assume that
	\[\lim_{j \to \infty} \frac{\mathscr{N}_\varphi(w_j)}{\mre{w_j}}=0\]
	for every sequence $(w_j)_{j\geq1}$ in $\mathbb{C}_0$ such that $\mre{w_j}\to 0$. We may without loss of generality assume that $0<\mre{w_j} \leq c_0\pi/\log{2}$, so that Lemma~\ref{lem:IAmTheta2} applies.
	
	Fix $\varepsilon>0$. We need to prove that there is some $J >0$ such that 
	\begin{equation}\label{eq:epsest} 
		\frac{N_\varphi(w_j)}{\mre{w_j}} \leq \varepsilon 
	\end{equation}
	for every $j\geq J$. For each $j\geq1$, define $k_j$ by the requirement that $w_j/c_0 \in S_{k_j}$. By Lemma~\ref{lem:IAmTheta2} and the decomposition \eqref{eq:Ndecomp} there is some non-negative integer $K$ (which does not depend on $j$) such that
	\[\frac{N_\varphi(w_j)}{\mre{w_j}} \leq \sum_{|k-k_j| \leq K} \frac{\mathscr{N}_{\varphi,k}(w_j)}{\mre{w_j}} + \frac{\varepsilon}{2} = \sum_{|k| \leq K} \frac{\mathscr{N}_{\varphi,k}(\widetilde{w}_j)}{\mre{\widetilde{w}_j}} + \frac{\varepsilon}{2},\]
	where the points $\widetilde{w}_j$ arise from Lemma~\ref{lem:shiftit}, whence $\mre{\widetilde{w}_j} = \mre{w_j} \to 0$ as $j\to \infty$. We can now appeal to Lemma~\ref{lem:Nevank} to choose $J$ so large that
	\[\frac{\mathscr{N}_{\varphi,k}(\widetilde{w}_j)}{\mre{\widetilde{w}_j}} \leq \frac{\varepsilon}{4K+2}\]
	whenever $|k|\leq K$ and $j\geq J$. Hence \eqref{eq:epsest} holds for $j\geq J$. 
\end{proof}

\section{Proof of Theorem~\ref{thm:compact}} \label{sec:cpctproof} 
By Theorem~\ref{thm:Nevandecomp}, it is sufficient to prove that $\mathscr{C}_\varphi \colon \mathscr{H}^2 \to \mathscr{H}^2$ is compact if and only if $\mathscr{N}_\varphi(w) = o(\mre w)$ as $\mre{w}\to0^+$. To do so, we will adapt the proof of \cite[Thm.~1.4]{BP21} to the present case, in an argument that relies on Lemma~\ref{lem:CoV} and Lemma~\ref{lem:IAmTheta}. Several preliminary results are required; the first two estimates are similar to those of \cite[Lem.~7.1]{BP21} and \cite[Lem.~7.3]{BP21}, respectively. 
\begin{lemma}\label{lem:weakly} 
	Fix $j\in \mathbb{O}$. Suppose that $(f_k)_{k\geq1}$ is a sequence in $\mathscr{H}^2_j$ such that $\|f_k\|_{\mathscr{H}^2} \leq 1$ for every $k\geq1$ and which converges weakly to $0$. For every $\varepsilon>0$ and $\theta>0$ there is some constant $K=K(\varepsilon,\theta)$ such that 
	\begin{equation}\label{eq:weakly} 
		|f_k(+\infty)|^2 + \frac{2\log{2}}{\pi} \int_{\mre{w}\geq \theta} |f_k'(w)|^2 \, \mathscr{N}_\varphi(w)\,dw \leq \varepsilon^2 
	\end{equation}
	whenever $k\geq K$. 
\end{lemma}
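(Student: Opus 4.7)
The plan is to split the integral in \eqref{eq:weakly} into a compact region where pointwise convergence forces the integral to vanish, and a tail handled by a uniform comparison estimate derived from Lemma~\ref{lem:CoV}. First, every element of $\mathscr{H}^2_j$ is an absolutely convergent Dirichlet series on $\mathbb{C}_0$, so point evaluation at any $s \in \mathbb{C}_0 \cup \{+\infty\}$ is a bounded linear functional on $\mathscr{H}^2_j$. The weak convergence $f_k \rightharpoonup 0$ in this closed subspace therefore yields $f_k(s) \to 0$ for every such $s$; in particular $|f_k(+\infty)|^2 \to 0$, which handles the first term of \eqref{eq:weakly}. By Montel's theorem applied to the uniformly bounded holomorphic family $(f_k)$ on $\mathbb{C}_0$, we also have $f_k' \to 0$ locally uniformly on $\mathbb{C}_0$.

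For $R > 0$, let $D_R = \{w \in \mathbb{C}_0 : \theta \leq \mre w,\ |w| \leq R\}$, which is compact in $\mathbb{C}_0$. On $D_R$, uniform convergence of $f_k'$ to $0$, combined with the bound $\mathscr{N}_\varphi(w) \leq \mre w/c_0 \leq R/c_0$ from \eqref{eq:NLest}, immediately gives $\int_{D_R} |f_k'|^2 \mathscr{N}_\varphi\, dw \to 0$ as $k \to \infty$ for each fixed $R$. The substantive task is then the uniform tail bound
\[
\sup_{\|f_k\|_{\mathscr{H}^2}\leq 1}\; \int_{\{\mre w \geq \theta\} \setminus D_R} |f_k'(w)|^2 \mathscr{N}_\varphi(w)\, dw \;\longrightarrow\; 0 \qquad \text{as } R \to \infty.
\]

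To establish this, I set $q = \max(2, j)$ and use the test function $g(s) = q^{-s}$, which lies in $\mathscr{H}^2_j$. Lemma~\ref{lem:CoV} applied to $g$ gives the integrability $\int_{\mathbb{C}_0} q^{-2\mre w}\mathscr{N}_\varphi(w)\, dw < \infty$. On the other hand, writing $f_k(s) = \sum_{l \geq 0} b_{j 2^l, k}(j 2^l)^{-s}$ and applying Cauchy--Schwarz termwise to $f_k'$ yields a uniform pointwise bound $|f_k'(w)|^2 \leq C_{j,\theta}\, q^{-2\mre w}$ for $\mre w \geq \theta$ and $\|f_k\|_{\mathscr{H}^2} \leq 1$. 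Here $q$ captures the slowest-decaying term in $f_k'$: for $j \geq 3$ this is the $l = 0$ term of order $j^{-s}$, while for $j = 1$ that term vanishes (since $\log 1 = 0$), leaving the $l = 1$ term of order $2^{-s}$ dominant. Combining the two bounds reduces the tail to $C_{j,\theta}$ times the tail of $\int q^{-2\mre w}\mathscr{N}_\varphi\, dw$, which tends to $0$ as $R \to \infty$, uniformly in $k$.

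The main subtlety is the choice of $q$ so that the decay of $|f_k'|^2$ is matched by a test function to which Lemma~\ref{lem:CoV} yields useful integrability. In particular, the naive choice $g(s) = j^{-s}$ fails when $j = 1$, because $g \equiv 1$ has $g' = 0$; replacing it by $g(s) = 2^{-s} \in \mathscr{H}^2_1$ resolves this edge case.
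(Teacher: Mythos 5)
Your argument is correct, but it takes a genuinely different route from the paper's. You split the integral over $\{\mre{w}\geq\theta\}$ into a compact piece $D_R$ and a tail, handle the compact piece by Montel's theorem (local uniform convergence of $f_k'$ to $0$) together with the boundedness of $\mathscr{N}_\varphi$ there, and handle the tail with a uniform Cauchy--Schwarz bound $|f_k'(w)|^2 \leq C_{j,\theta}\,q^{-2\mre{w}}$ combined with the integrability of $q^{-2\mre{w}}\mathscr{N}_\varphi(w)$ over $\mathbb{C}_0$, which you correctly extract from Lemma~\ref{lem:CoV} applied to $e_q$ and Lemma~\ref{lem:norm1}. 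The paper avoids any compactness decomposition altogether: it truncates the Dirichlet series of $f_k'$ at a level $L = L(\varepsilon,\theta)$, uses weak convergence to make the finitely many coefficients $b_l(k)$ with $l<L$ small, uses $|b_l(k)|\leq 1$ and $\mre{w}\geq\theta$ to control the series tail, and packages the two estimates as the single pointwise comparison $|f_k'(w)| \leq \tfrac{\varepsilon}{\sqrt{2}}\,|e_q'(w)|$ for $\mre{w}\geq\theta$ and $k$ large; Lemma~\ref{lem:CoV} and $\|\mathscr{C}_\varphi e_q\|_{\mathscr{H}^2}\leq 1$ then finish in one stroke. The ingredients are essentially the same --- coefficientwise weak convergence, the exponential decay of members of $\mathscr{H}^2_j$ on $\{\mre{w}\geq\theta\}$, and the change-of-variable formula --- but the paper pushes $\varepsilon$ into the pointwise comparison while you push it into the tail of a finite measure; the paper's version is slightly more direct. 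Your remark about replacing $j^{-s}$ by $2^{-s}$ when $j=1$ (because the $l=0$ term of $f_k'$ vanishes) is precisely the $j=1$ versus $j>1$ case distinction the paper also makes.
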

\begin{proof}
	We consider first the case $j=1$ and write $f_k(s) = \sum_{l\geq0} b_l(k) 2^{-ls}$. The assumption that $(f_k)_{k\geq1}$ converges weakly to $0$ implies that $|b_l(k)| \to 0$ as $k\to\infty$, for every fixed $l$. Since $f_k(+\infty)=b_0(k)$, there is some $K_1$ such that if $k\geq K_1$ then $|f_k(+\infty)| \leq \varepsilon/\sqrt{2}$. Next we want to demonstrate that there is a constant $K_2=K_2(\varepsilon,\theta,j)$ such that 
	\begin{equation}\label{eq:theguy} 
		|f_k'(w)| \leq \frac{\varepsilon}{\sqrt{2}} \, |e_2'(w)| 
	\end{equation}
	for $\mre{w}\geq \theta>$0 whenever $k\geq K_2$. Here we recall that $e_2(s)=2^{-s}$. This would give the stated claim since then
	\[\frac{2\log{2}}{\pi} \int_{\mre{w}\geq \theta} |f_k'(w)|^2 \, \mathscr{N}_\varphi(w)\,dw \leq \frac{\varepsilon^2}{2} \frac{2\log{2}}{\pi} \int_{\mre{w}\geq 0} |e_2'(w)|^2 \, \mathscr{N}_\varphi(w)\,dw \leq \frac{\varepsilon^2}{2}\]
	whenever $k\geq K_2$, where we used Lemma~\ref{lem:CoV} in the final estimate. To establish the estimate \eqref{eq:theguy}, we first choose $L=L(\theta,\varepsilon)\geq2$ so large that
	\[\sum_{l=L}^\infty l 2^{-l\mre{w}} \leq \frac{\varepsilon}{2\sqrt{2}} 2^{-\mre{w}}\]
	whenever $\mre{w}\geq \theta>0$. We then choose $K_2=K_2(L,\varepsilon)$ so large that
	\[|b_l(k)| \leq \frac{\varepsilon}{2\sqrt{2}} \frac{2}{L(L-1)}\]
	for $l=1,\ldots, L-1$ and $k\geq K_2$. Since $\|f_k\|_{\mathscr{H}^2}\leq1$, we certainly have $|b_l(k)|\leq1$ for $l\geq L$. By the triangle inequality we obtain that
	\[|f_k'(w)| \leq \sum_{l=1}^{L-1} \frac{\varepsilon}{2\sqrt{2}} \frac{2}{L(L-1)} (\log{2^l}) 2^{-l\mre{w}} + \sum_{l=L}^\infty \log(2^l) 2^{-l\mre{w}} \leq \frac{\varepsilon}{\sqrt{2}} |e_2'(w)|\]
	whenever $k\geq K_2$. 
	
	If $j>1$, the same proof works with the following modifications. In this case $f_k(+\infty)=0$. The sum for $f_k'$ now starts at $l=0$, but taking this into account the same approach shows that
	\[|f_k'(w)| \leq \varepsilon |e_j'(w)|\]
	for $k\geq K(\varepsilon,\theta)$, yielding \eqref{eq:weakly} by Lemma~\ref{lem:CoV}. 
\end{proof}
\begin{lemma}\label{lem:embedding} 
	Fix $\delta>0$ and suppose that $f(s) = \sum_{n\geq1} a_n n^{-s}$ is in $\mathscr{H}^2_j$ for some $j \in \mathbb{O}$. Then there is a constant $C_\delta>0$ such that
	\[\int_{-\infty}^\infty |f'(\sigma+it)|^2 \,\frac{dt}{(1+t^2)^{(1+\delta)/2}} \leq C_\delta \sum_{n=1}^\infty |a_n|^2 (\log{n})^2 n^{-2\sigma}\]
	for every $\sigma>0$. 
\end{lemma}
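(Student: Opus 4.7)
The key observation is that $f \in \mathscr{H}^2_j$ means $f(s) = j^{-s} F(2^{-s})$ for some $F \in H^2(\mathbb{D})$. Setting $P = 2\pi/\log 2$, this gives $f(s+iP) = j^{-iP} f(s)$, so $|f'(\sigma+it)|$ is periodic in $t$ with period $P$. The plan is to exploit this periodicity together with orthogonality on one period $[-P/2, P/2]$.

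First, I would periodize the weight. Splitting $\mathbb{R}$ into translates of $[-P/2,P/2]$ gives
\[
\int_{-\infty}^\infty |f'(\sigma+it)|^2 \frac{dt}{(1+t^2)^{(1+\delta)/2}} = \int_{-P/2}^{P/2} |f'(\sigma+it)|^2\, W_\delta(t)\, dt,
\]
where $W_\delta(t) = \sum_{k\in\mathbb{Z}}(1+(t+kP)^2)^{-(1+\delta)/2}$. Since $\delta>0$, a routine comparison with $\int_\mathbb{R} (1+x^2)^{-(1+\delta)/2}\,dx$ shows that $\|W_\delta\|_\infty \leq C'_\delta$ for some constant $C'_\delta$ depending only on $\delta$.

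Second, since $a_n = 0$ unless $n = j \cdot 2^k$, the frequencies $\log n$ all lie in $\log j + (\log 2)\mathbb{Z}$, so the differences $\log n - \log m$ lie in $(\log 2)\mathbb{Z}$. Expanding $|f'(\sigma+it)|^2$ as a double sum and integrating termwise over one period, the orthogonality relation
\[
\int_{-P/2}^{P/2} e^{-it(\log n - \log m)}\, dt = P\, \delta_{nm}
\]
(valid for such $n,m$) yields
\[
\int_{-P/2}^{P/2} |f'(\sigma+it)|^2\, dt = P \sum_{n\geq1} |a_n|^2 (\log n)^2\, n^{-2\sigma}.
\]
Combining the two displays gives the claim with $C_\delta = P\, C'_\delta$.

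The only non-routine step is the uniform boundedness of $W_\delta$, and this is immediate from $\delta>0$. The remainder is a periodization plus an orthogonality argument entirely analogous to the one underlying the Carlson formula \eqref{eq:carlson}, so I anticipate no genuine obstacle.
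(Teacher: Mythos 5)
Your proof is correct and follows essentially the same route as the paper: exploit the period-$2\pi/\log 2$ translation property of functions in $\mathscr{H}^2_j$, use the Carlson-type orthogonality over one period to get $\int_{\text{period}}|f'(\sigma+it)|^2\,dt = \frac{2\pi}{\log 2}\sum_n |a_n|^2(\log n)^2 n^{-2\sigma}$, and then sum over the intervals using that the weight $(1+t^2)^{-(1+\delta)/2}$ is integrable. The only cosmetic difference is that you package the summation as a periodized weight $W_\delta$ and bound $\|W_\delta\|_\infty$, whereas the paper leaves the final step ("the estimate easily follows") implicit; both are the same argument.
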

\begin{proof}
	Since $f$ is in $\mathscr{H}^2_j$, we can use the periodicity condition \eqref{eq:periodic} and \eqref{eq:carlson} to see that
	\[\int_{\frac{2\pi i k}{\log{2}}}^{\frac{2\pi i (k+1)}{\log{2}}} |f'(\sigma+it)|^2 \,dt = \frac{2\pi}{\log{2}}\sum_{n=1}^\infty |a_n|^2 (\log{n})^2 n^{-2\sigma}\]
	for every $k \in \mathbb{Z}$ and every $\sigma>0$. The estimate easily follows.
\end{proof}

We shall also require the following pointwise estimate for the derivative of a function in $\mathscr{H}^2_j$.
\begin{lemma}\label{lem:pest} 
	Suppose that $j\in \mathbb{O}\setminus\{1\}$. If $f(s) = \sum_{k\geq0} b_k j^{-s} 2^{-ks}$ is in $\mathscr{H}_j^2$, then
	\[|f'(w)|^2 \leq C_\theta \|f\|_{\mathscr{H}^2}^2 (\log{j})^2 j^{-2\mre{w}}, \qquad C_\theta = \frac{1+4^{-\theta}}{(1-4^{-\theta})^3},\]
	for $\mre{w}\geq \theta>0$. 
\end{lemma}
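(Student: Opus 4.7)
The plan is to expand $f$ term by term and use Cauchy--Schwarz, pulling out the factor $(\log j)^2$ via the hypothesis $j \geq 3$. Writing $f(s) = \sum_{k\geq0} b_k (j2^k)^{-s}$, we have $\|f\|_{\mathscr{H}^2}^2 = \sum_{k\geq0} |b_k|^2$, and termwise differentiation yields
\[f'(w) = -\sum_{k\geq 0} b_k (\log j + k\log 2)\, (j2^k)^{-w}.\]
Applying the triangle inequality followed by Cauchy--Schwarz gives
\[|f'(w)|^2 \leq \|f\|_{\mathscr{H}^2}^2\, j^{-2\mre w} \sum_{k\geq 0} (\log j + k \log 2)^2\, 4^{-k\mre w}.\]

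The next step is to pull $(\log j)^2$ outside. Since $j$ is an odd integer greater than $1$, we have $j \geq 3$, and in particular $\log 2 \leq \log j$. Therefore $(\log j + k\log 2)^2 \leq (\log j)^2 (k+1)^2$, and with $q := 4^{-\mre w}$,
\[\sum_{k \geq 0} (\log j + k\log 2)^2\, 4^{-k\mre w} \leq (\log j)^2 \sum_{k\geq 0} (k+1)^2 q^k.\]

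I would then evaluate the generating function explicitly: starting from $\sum_{k\geq 0} q^k = 1/(1-q)$ and differentiating twice (with appropriate rearrangement), one obtains
\[\sum_{k\geq 0}(k+1)^2 q^k = \frac{1+q}{(1-q)^3}.\]
Since the map $q \mapsto (1+q)/(1-q)^3$ is strictly increasing on $[0,1)$ and $q = 4^{-\mre w} \leq 4^{-\theta}$ whenever $\mre w \geq \theta$, this sum is bounded above by $C_\theta = (1+4^{-\theta})/(1-4^{-\theta})^3$. Combining everything yields $|f'(w)|^2 \leq C_\theta \|f\|_{\mathscr{H}^2}^2 (\log j)^2 j^{-2\mre w}$ as claimed.

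There is no real obstacle here; the only point requiring care is the hypothesis $j \neq 1$, which is exactly what allows the crude bound $\log 2 \leq \log j$ and thus the clean extraction of the $(\log j)^2$ factor. The explicit constant $C_\theta$ arises directly from evaluating the elementary geometric-type series at its worst-case argument $q=4^{-\theta}$.
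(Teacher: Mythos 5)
Your proof is correct and follows essentially the same route as the paper: Cauchy--Schwarz on the differentiated series, the bound $\log 2 \leq \log j$ (valid since $j \geq 3$) to extract $(\log j)^2$ against $(k+1)^2$, and then the observation that $q \mapsto \sum_{k\geq 0}(k+1)^2 q^k$ is increasing on $[0,1)$ with value $(1+q)/(1-q)^3$. The only cosmetic difference is that you wrote out the closed form of the series rather than leaving it as $\sum_{k\geq 0}(1+k)^2 4^{-k\theta}$ and stating the constant.
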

\begin{proof}
	Applying the Cauchy--Schwarz inequality we find that 
	\begin{align*}
		|f'(w)|^2 &\leq \|f\|_{\mathscr{H}^2}^2 \sum_{k=0}^\infty \big(\log(j 2^k)\big)^2 j^{-2\mre{w}} 4^{-k\mre{w}} \\
		&= \|f\|_{\mathscr{H}^2}^2 (\log{j})^2 j^{-2\mre{w}} \sum_{k=0}^\infty \left(1+\frac{k\log{2}}{\log{j}}\right)^2 4^{-k\mre{w}} \\
		& \leq \|f\|_{\mathscr{H}^2}^2 (\log{j})^2 j^{-2\mre{w}} \sum_{k=0}^\infty (1+k)^2 4^{-k\theta} 
	\end{align*}
	where we in the final estimate used that $j\geq3$ and $\mre{w}\geq \theta$. 
\end{proof}
\begin{proof}
	[Proof of Theorem~\ref{thm:compact}: Sufficiency] We assume now that
	\[\lim_{\mre{w}\to0^+} \frac{N_\varphi(w)}{\mre{w}} =0\]
	where $N_\varphi$ is the Nevanlinna counting function \eqref{eq:nevanlinna}. By Theorem~\ref{thm:Nevandecomp} this is actually equivalent to the weaker assumption that 
	\begin{equation}\label{eq:restricted0} 
		\lim_{\mre{w}\to0^+} \frac{\mathscr{N}_\varphi(w)}{\mre{w}} =0, 
	\end{equation}
	where $\mathscr{N}_\varphi$ is the restricted counting function \eqref{eq:counting2}. Our goal is to prove that $\mathscr{C}_\varphi$ is compact on $\mathscr{H}^2$, which by Lemma~\ref{lem:Csum} is equivalent to proving that 
	\begin{enumerate}
		\item[(i)] $\mathscr{C}_{\varphi,j}$ is compact for every $j \in \mathbb{O}$, 
		\item[(ii)] $\|\mathscr{C}_{\varphi,j}\|\to 0$ as $j \to \infty$. 
	\end{enumerate}
	
	We begin by establishing an estimate that is of relevance to the proofs of both claims. Fix $0<\delta<1$. For every $\varepsilon>0$, there is some $0<\theta=\theta(\varepsilon) \leq c_0 \pi/\log{2}$ such that if $0<\mre{w}\leq \theta$, then 
	\begin{equation}\label{eq:epsiest} 
		\mathscr{N}_\varphi(w) \leq \varepsilon^2 \frac{\mre{w}}{(1+(\mim{w})^2)^{(1+\delta)/2}}. 
	\end{equation}
	Suppose for the sake of contradiction that \eqref{eq:epsiest} does not hold along some sequence $(w_k)_{k\geq1}$ in $\mathbb{C}_0$ with $c_0 \pi/\log{2} \geq \mre{w_k} \to 0$ as $k \to \infty$. If $|\mim{w_k}|$ is unbounded, we obtain a contradiction to Lemma~\ref{lem:IAmTheta}. However, if $|\mim{w_k}|$ is bounded we have a contradiction to \eqref{eq:restricted0}. 
	
	Combining \eqref{eq:epsiest} with Lemma~\ref{lem:embedding} we find that if $f(s) =\sum_{n\geq1} a_n n^{-s}$ is any function in $\mathscr{H}^2_j$ with $\|f\|_{\mathscr{H}^2}\leq1$, then 
	\begin{equation}\label{eq:epsiest2} 
		\int_{\mre{w} \leq \theta} |f'(w)|^2 \,\mathscr{N}_\varphi(w)\,dw \leq C_\delta \varepsilon^2 \sum_{n=1}^\infty |a_n|^2 (\log{n})^2 \int_0^\theta n^{-2\sigma} \sigma \,d\sigma \leq C_\delta \varepsilon^2, 
	\end{equation}
	where we made use of the identity \eqref{eq:coeffint} to assert the final inequality.
	
	Let us prove the validity of (i). Fix $j \in \mathbb{O}$ and suppose that $(f_k)_{k\geq1}$ is a sequence in $\mathscr{H}^2_j$ which converges weakly to $0$ and satisfies $\|f_k\|_{\mathscr{H}^2} \leq 1$ for every $k$. We then choose $\theta>0$ such that \eqref{eq:epsiest2} holds for each $f_k$. Next, appealing to Lemma~\ref{lem:weakly}, we choose $K$ such that 
	\begin{equation}\label{eq:est3} 
		|f_k(+\infty)|^2 + \frac{2\log{2}}{\pi} \int_{\mre{w}\geq \theta} |f_k'(w)|^2 \, \mathscr{N}_\varphi(w)\,dw \leq \varepsilon^2 
	\end{equation}
	for $k \geq K$. By Lemma~\ref{lem:CoV}, combining \eqref{eq:epsiest2} and \eqref{eq:est3} yields that
	\[\|\mathscr{C}_\varphi f_k\|_{\mathscr{H}^2}^2 \leq (1 + C_\delta)\varepsilon^2\]
	when $k\geq K$. Since $\varepsilon$ was arbitrary, we conclude that $\|\mathscr{C}_\varphi f_k\|_{\mathscr{H}^2} \to 0$ as $k\to\infty$, and thus that $\mathscr{C}_{\varphi,j}$ is compact.
	
	The proof of (ii) is similar. Fix $\varepsilon>0$. Choosing $\theta > 0$ so that \eqref{eq:epsiest2} holds, it is by Lemma~\ref{lem:CoV} sufficient to find $J \geq 3$ such that 
	\begin{equation}\label{eq:getthis} 
		\frac{2\log{2}}{\pi} \int_{\mre{w}\geq\theta} |f_j'(w)|^2\,\mathscr{N}_\varphi(w)\,dw \leq \varepsilon 
	\end{equation}
	whenever $f_j \in \mathscr{H}^2_j$, $\|f_j\|_{\mathscr{H}^2} = 1$, and $j\geq J$. Using Lemma~\ref{lem:pest} and that $j\geq3$, we have that 
	\begin{align*}
		\int_{\mre{w}\geq\theta} |f_j'(w)|^2\,\mathscr{N}_\varphi(w)\,dw &\leq C_\theta \int_{\mre{w} \geq \theta} |e_j'(w)|^2\,\mathscr{N}_\varphi(w)\,dw \\
		&\leq C_\theta \frac{(\log{j})^2}{(\log{2})^2} \left(\frac{j}{2}\right)^{-2\theta} \int_{\mre{w}\geq\theta} |e_2'(w)|^2\,\mathscr{N}_\varphi(w)\,dw. 
	\end{align*}
	The final integral is less than $\pi/(2\log{2}) \|\mathscr{C}_\varphi e_2\|_{\mathscr{H}^2}^2 \leq \pi/(2\log{2})$ by Lemma~\ref{lem:CoV} and Lemma~\ref{lem:norm1}. We thus obtain \eqref{eq:getthis} for sufficiently large $j$. 
\end{proof}

For the proof of necessity, we require the following sub-mean value property of the Nevanlinna counting function for the unit disc. It is convenient to introduce the notation $\mathbb{D}(w,r)=\{\xi \in \mathbb{C}\,:\, |\xi-w|<r\}$.
\begin{lemma}\label{lem:submean} 
	Suppose that $\psi$ is an analytic self-map of the unit disc $\mathbb{D}$ and let $N_\psi^{\mathbb{D}}$ denote the Nevanlinna counting function associated with $\psi$. If $g$ is an analytic map from a domain $\Omega$ to $\mathbb{D}$, $\mathbb{D}(w,r) \subseteq\Omega$, and $\psi(0) \not \in g(\mathbb{D}(w,r))$, then
	\[N_\psi^{\mathbb{D}}(g(w)) \leq \frac{1}{\pi r^2} \int_{\mathbb{D}(w,r)} N_\psi^{\mathbb{D}}(g(\xi))\,d\xi.\]
\end{lemma}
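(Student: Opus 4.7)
The plan is to reduce the assertion to the classical fact that the Nevanlinna counting function $N_\psi^{\mathbb{D}}$ is subharmonic on $\mathbb{D}\setminus\{\psi(0)\}$, a result due to J.~Shapiro (see \cite{Shapiro87} or \cite[Ch.~10]{Shapiro93}, where it is derived by writing $N_\psi^{\mathbb{D}}(\eta)$ as $-\log|B_\eta(0)|$ for the Blaschke product $B_\eta$ with zeros at $\psi^{-1}(\{\eta\})$ and invoking Jensen's formula). Once this is in hand, the composition and sub-mean value lemma follow almost immediately.

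First I would recall that if $u$ is subharmonic on an open set $U\subseteq \mathbb{C}$ and $g\colon\Omega\to U$ is holomorphic, then $u\circ g$ is subharmonic on $\Omega$ (this is the standard pullback property of subharmonic functions under holomorphic maps). Apply this with $U = \mathbb{D}\setminus\{\psi(0)\}$ and $u = N_\psi^{\mathbb{D}}$. The hypothesis $\psi(0)\notin g(\mathbb{D}(w,r))$ ensures that $g$ maps the closed disc $\overline{\mathbb{D}(w,r)}\subseteq\Omega$ into $\mathbb{D}\setminus\{\psi(0)\}$ (at least on $\mathbb{D}(w,r)$, which is enough), so $N_\psi^{\mathbb{D}}\circ g$ is subharmonic on $\mathbb{D}(w,r)$.

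Next I would invoke the sub-mean-value inequality for subharmonic functions over discs: for $v$ subharmonic on a neighborhood of $\overline{\mathbb{D}(w,r)}$,
\[v(w) \leq \frac{1}{\pi r^2}\int_{\mathbb{D}(w,r)} v(\xi)\,d\xi.\]
Applying this to $v = N_\psi^{\mathbb{D}}\circ g$ at the center $w$ yields precisely
\[N_\psi^{\mathbb{D}}(g(w)) \leq \frac{1}{\pi r^2}\int_{\mathbb{D}(w,r)} N_\psi^{\mathbb{D}}(g(\xi))\,d\xi,\]
as claimed.

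The only genuine content is the subharmonicity of $N_\psi^{\mathbb{D}}$ on $\mathbb{D}\setminus\{\psi(0)\}$, which is the nontrivial classical input and should be stated as such with a reference. If one wished to avoid quoting subharmonicity directly, the main obstacle would be re-proving it — but given that this result is standard in the composition operator literature (and is presumably used implicitly elsewhere in the paper via the reference \cite{BP21}), the cleanest route is to cite it and then let the holomorphic-pullback principle plus the disc sub-mean-value inequality do the remaining (formal) work.
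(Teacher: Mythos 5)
Your proof is correct, and it follows essentially the route of the reference that the paper itself gives in lieu of a proof (Shapiro 1987, \S 4.6): subharmonicity of $N_\psi^{\mathbb{D}}$ away from $\psi(0)$, pullback by the holomorphic map $g$, and the disc sub-mean-value inequality. One small point worth noting: as Shapiro actually presents it, one proves the sub-mean-value inequality directly for the dilated counting functions $N_{\psi_r}$ (with $\psi_r(z)=\psi(rz)$, $r<1$), which are manifestly subharmonic away from $\psi(0)$ as locally finite sums of subharmonic terms, and then passes to the increasing limit $N_{\psi_r}\uparrow N_\psi$ by monotone convergence; this avoids having to verify the upper semicontinuity of $N_\psi$ itself, which is the one slightly delicate ingredient hidden in your appeal to ``$N_\psi^{\mathbb{D}}$ is subharmonic on $\mathbb{D}\setminus\{\psi(0)\}$''. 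Since that subharmonicity is nonetheless a standard fact with the references you cite, your argument stands; it is simply a matter of whether one quotes the subharmonicity outright or re-derives the mean-value inequality through the approximation.
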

\begin{proof}
	For a short proof we refer to \cite[Sec.~4.6]{Shapiro87}. 
\end{proof}
\begin{proof}
	[Proof of Theorem~\ref{thm:compact}: Necessity] We assume now that $\mathscr{C}_\varphi$ is compact on $\mathscr{H}^2$ and seek to establish that
	\[\lim_{\mre{w}\to 0^+} \frac{N_\varphi(w)}{\mre{w}}=0\]
	where $N_\varphi$ is the Nevanlinna counting function \eqref{eq:nevanlinna}. In view of Theorem~\ref{thm:Nevandecomp}, we may equivalently establish that
	\[\lim_{\mre{w}\to 0^+} \frac{\mathscr{N}_\varphi(w)}{\mre{w}}=0,\]
	where $\mathscr{N}_\varphi$ is the restricted counting function \eqref{eq:counting2}. 
	
	Since $\mathscr{C}_\varphi$ is compact on $\mathscr{H}^2$, it is certainly compact on the subspace $\mathscr{H}^2_1$. We shall make use of a version of Lemma~\ref{lem:CoV} adapted to a larger half-strip. If we first write down the Littlewood--Paley formula \eqref{eq:LP2} with respect to the half-strip $|\mim s| < 2\pi/\log 2$, and then make a change of variables, we obtain the formula 
	\begin{equation}\label{eq:CoValt} 
		\|\mathscr{C}_\varphi f\|_{\mathscr{H}^2}^2 = |f(+\infty)|^2 + \frac{\log{2}}{\pi}\int_{\mathbb{C}_0} |f'(w)|^2\, \widetilde{\mathscr{N}}_\varphi(w)\,dw, 
	\end{equation}
	for every $f \in \mathscr{H}^2_1$. Here the counting function has been restricted to the larger strip, so that
	\[\widetilde{\mathscr{N}}_\varphi(w) = \sum_{\substack{s \in \varphi^{-1}(\{w\}) \\ |\mim{s}| < 2\pi/\log{2}}} \mre{s}.\]
	At every point in $w\in\mathbb{C}_0$, the subspace $\mathscr{H}^2_1$ has a reproducing kernel which is bounded in $\mathbb{C}_0$. A direct computation shows that the normalized reproducing kernel at $w \in \mathbb{C}_0$ is given by 
	\begin{equation}\label{eq:RPK1} 
		K_w(s) = \frac{\sqrt{1-2^{-2\mre{w}}}}{1-2^{-\overline{w}-s}}. 
	\end{equation}
	If $(w_k)_{k\geq1}$ is any sequence in $\mathbb{C}_0$ with $\mre{w_k}\to 0^+$, then $(K_{w_k})_{k\geq1}$ converges weakly to $0$ in $\mathscr{H}^2_1$, and thus $\|\mathscr{C}_\varphi K_{w_k}\| \to 0$ as $k \to \infty$, since $\mathscr{C}_\varphi$ is compact. From \eqref{eq:CoValt} and \eqref{eq:RPK1} we therefore conclude that 
	\begin{equation}\label{eq:altNtest} 
		\lim_{k\to\infty} \frac{1}{(\mre{w_k})^3} \int_{\mathbb{D}(w_k,\mre{w_k}/2)} \widetilde{\mathscr{N}}_\varphi(\xi)\,d\xi =0. 
	\end{equation}
	Let us for the moment restrict our attention to a single $w = w_k$, assuming without loss of generality that $0<\mre{w} \leq c_0 \pi/\log{2}$. As in the proof of Lemma~\ref{lem:IAmTheta}, we define
	\[\psi(z) = \frac{\varphi(\Theta_T(z))-w}{\varphi(\Theta_T(z))+\overline{w}},\]
	where we now fix $T=2\pi/\log{2}$. We also let $g(\xi) = (\xi-w)/(\xi+\overline{w})$, so that $g$ is a conformal map from $\mathbb{C}_0$ to $\mathbb{D}$. Clearly, $\psi(z)=g(\xi)$ if and only if $\varphi(\Theta_T(z))=\xi$. Hence 
	\begin{equation}\label{eq:NpsiD} 
		N_\psi^{\mathbb{D}}(g(\xi)) = \sum_{z\in \psi^{-1}(\{g(\xi)\})} \log{\frac{1}{|z|}} = \sum_{\substack{s \in \varphi^{-1}(\{\xi\}) \\ |\mim{s}|<T}} \log{\frac{1}{|\Theta_T^{-1}(s)|}}. 
	\end{equation}
	If $\xi \in \mathbb{D}(w,\mre{w}/2)$, then
	\[\mre{\xi}\leq (3/2)\mre{w} \leq c_0 (3/4) T.\]
	Since $\mre{\varphi_0(s)}\geq0$, we see that if $\varphi(s)=\xi$, then certainly $\mre{s} \leq (3/4) T$. Set $S_T = \{s \in \mathbb{C}_0 \,:\, |\mim{s}|<T\}$. If $s \in S_T$, then it follows from a Kellogg--Warschawski type theorem (see e.g.~\cite[Thm.~3.9]{Pommerenke92}) that
	\[\log{\frac{1}{|\Theta_T^{-1}(s)|}} \asymp 1 - |\Theta_T^{-1}(s)| \asymp |(\Theta_T^{-1})'(s)| \dist(s, \partial S_{T}) \ll |s^2 + T^2| \mre s \ll \mre s.\]
	It is of course also possible to establish this estimate by direct computation. Since $\mre{\varphi(T)}\geq c_0T > \mre{\xi}$ for every $\xi \in \mathbb{D}(w,\mre{w}/2)$, we can use the estimate together with Lemma~\ref{lem:submean} for $\Omega=\mathbb{C}_0$ to conclude that
	\[N_\psi^{\mathbb{D}}(g(w)) \leq \frac{4}{\pi (\mre{w})^2} \int_{\mathbb{D}(w,\mre{w}/2)} N_\psi^{\mathbb{D}}(g(\xi))\,d\xi \ll \frac{1}{(\mre{w})^2}\int_{\mathbb{D}(w,\mre{w}/2)} \widetilde{\mathscr{N}}_\varphi(\xi)\,d\xi.\]
	Next we recall that $g(w)=0$ and return to formula \eqref{eq:NpsiD}. If we restrict ourselves to solutions of $\varphi(s)=w$ satisfying $|\mim{s}|\leq \pi/\log{2}=T/2$, then \eqref{eq:RL2} says that
	\[\mathscr{N}_\varphi(w) \ll N_\psi^{\mathbb{D}}(0) = N_\psi^{\mathbb{D}}(g(w)).\]
	Applying these two estimates for every $w = w_k$, we have finally proven that \eqref{eq:altNtest} implies the desired conclusion,
	\[\lim_{k\to\infty} \frac{\mathscr{N}_\varphi(w_k)}{\mre{w_k}}=0. \qedhere\]
\end{proof}

\section{Approximation numbers for angle maps} \label{sec:angle}
As in Section~\ref{sec:restricted} and Section~\ref{sec:cpctproof}, we will assume without loss of generality that we are working with the prime number $p=2$. Fix a real number $0 < \alpha < 1$ and let
\[\Phi_\alpha(z) = \left(\frac{1-z}{1+z}\right)^\alpha.\]
The function $\Phi_\alpha$ is a univalent map from the unit disc $\mathbb{D}$ onto the angle 
\[A_\alpha = \left\{s \in \mathbb{C} \,:\, |\mim{s}| < \tan\left(\frac{\alpha \pi}{2}\right) \mre{s}\right\}.\]
The following result is an immediate consequence of Theorem~\ref{thm:approxmain}. 
\begin{corollary}\label{cor:angle1} 
	Fix a positive integer $c_0$ and real numbers $\vartheta>0$ and $0<\alpha<1$. Let $\varphi_{\alpha, \vartheta}(s) = c_0 s + \vartheta + \Phi_\alpha(2^{-s})$. Then $\varphi_{\alpha,\vartheta}$ is in $\mathscr{G}_{\geq 1}$ and
	\[a_n(\mathscr{C}_{\varphi_{\alpha, \vartheta}}) \asymp n^{-\vartheta} (\log{n})^{-\frac{1}{2\alpha}}\]
	for $n\geq2$. The implied constants depend on $\alpha$ and $\vartheta$. 
\end{corollary}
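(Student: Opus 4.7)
The starting point is to verify the hypotheses of Theorem~\ref{thm:approxmain}. Since $2^{-s} \in \mathbb{D}$ for $s \in \mathbb{C}_0$ and $\Phi_\alpha(\mathbb{D}) = A_\alpha \subseteq \mathbb{C}_0$, we have $\mre \varphi_0(s) \geq \vartheta > 0$ throughout $\mathbb{C}_0$, so $\varphi_{\alpha,\vartheta} \in \mathscr{G}_{\geq 1}$. The Dirichlet series $\varphi_0$ is supported on $\mathbb{P} = \{2\}$, which is trivially $\nu$-sparse for every $\nu > 0$. Choosing $\nu = 2\vartheta/(1 + 2\vartheta) \in (0,1)$, we obtain equality $\nu/(1-\nu) = 2\vartheta$, so both parts of Theorem~\ref{thm:approxmain} apply and yield
\[a_n(\mathscr{C}_{\varphi_{\alpha,\vartheta}}) \asymp \|\mathscr{C}_{\varphi_{\alpha,\vartheta}} e_n\|_{\mathscr{H}^2}.\]
The problem thus reduces to the asymptotic evaluation of this norm.

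Next I would exploit the single-prime structure. The formula
\[\|\mathscr{C}_{\varphi_{\alpha,\vartheta}} e_n\|_{\mathscr{H}^2}^2 = \int_{\mathbb{T}^\infty} n^{-2\mre \varphi_0^\ast(\chi)}\,d\mu_\infty(\chi)\]
collapses to a single integral in the coordinate $\chi_1 = e^{i\theta}$ associated to $p = 2$, since $\varphi_0$ only involves $2^{-s}$. A direct computation gives $(1 - e^{i\theta})/(1 + e^{i\theta}) = -i\tan(\theta/2)$, so taking the principal branch of the $\alpha$-th power yields
\[\mre \Phi_\alpha(e^{i\theta}) = \cos\!\left(\tfrac{\alpha\pi}{2}\right)|\tan(\theta/2)|^\alpha.\]
Consequently,
\[\|\mathscr{C}_{\varphi_{\alpha,\vartheta}} e_n\|_{\mathscr{H}^2}^2 = n^{-2\vartheta} \int_{-\pi}^\pi n^{-2\cos(\alpha\pi/2)\,|\tan(\theta/2)|^\alpha}\,\frac{d\theta}{2\pi}.\]

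Finally, I would carry out a Laplace-type asymptotic on the remaining integral. Writing $A = 2\cos(\alpha\pi/2)\log n$, the integrand $e^{-A|\tan(\theta/2)|^\alpha}$ concentrates near $\theta = 0$ as $n \to \infty$, while the exponentially small contribution near $\pm \pi$ (where $|\tan(\theta/2)| \to \infty$) can be discarded. Using the uniform comparison $|\tan(\theta/2)| \asymp |\theta|$ on, say, $[-\pi/2,\pi/2]$ and rescaling via $u = A^{1/\alpha} \theta/2$, one finds
\[\int_{-\pi}^\pi e^{-A|\tan(\theta/2)|^\alpha}\,\frac{d\theta}{2\pi} \asymp A^{-1/\alpha} \int_{-\infty}^\infty e^{-|u|^\alpha}\,du \asymp (\log n)^{-1/\alpha},\]
with implied constants depending only on $\alpha$. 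Taking the square root and combining with the first step gives
\[a_n(\mathscr{C}_{\varphi_{\alpha,\vartheta}}) \asymp n^{-\vartheta}(\log n)^{-1/(2\alpha)}\]
for $n \geq 2$, as required.

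The one step requiring attention — though hardly an obstacle — is the Laplace asymptotic with the non-quadratic exponent $|\theta|^\alpha$, which is not covered by Lemma~\ref{lem:hankel} and must be done by hand; all other ingredients are immediate consequences of the previously established machinery.
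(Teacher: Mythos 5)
Your proposal is correct and follows exactly the route the paper takes: reduce via both parts of Theorem~\ref{thm:approxmain} to estimating $\|\mathscr{C}_{\varphi_{\alpha,\vartheta}} e_n\|_{\mathscr{H}^2}$, collapse the torus integral to the single coordinate for $p=2$, compute $\mre\Phi_\alpha(e^{i\theta})=\cos(\alpha\pi/2)|\tan(\theta/2)|^\alpha$, and perform a Laplace-type asymptotic in $\theta$ near $0$. The paper compresses the final step into the phrase ``a straightforward estimate has been carried out,'' while you supply the details (comparison $|\tan(\theta/2)|\asymp|\theta|$ on $[-\pi/2,\pi/2]$, negligibility of $|\theta|\geq\pi/2$, and the rescaling $u=A^{1/\alpha}\theta/2$) — which is exactly what that phrase hides.
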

\begin{proof}
	It is evident that $\varphi_{\alpha,\vartheta}$ is in $\mathscr{G}_{\geq1}$. Appealing to both parts of Theorem~\ref{thm:approxmain} and computing the $\mathscr{H}^2$-norm with \eqref{eq:H2astnorm}, we get for $n \geq 2$ that
	\[a_n(\mathscr{C}_{\varphi_{\alpha, \vartheta}}) \asymp \|\mathscr{C}_{\varphi_{\alpha,\vartheta}} e_n \|_{\mathscr{H}^2} = n^{-\vartheta} \left(\int_{\mathbb{T}} n^{-2\mre{\Phi_\alpha(e^{i\theta})}}\,\frac{d\theta}{2\pi}\right)^{\frac{1}{2}} \asymp n^{-\vartheta} (\log{n})^{-\frac{1}{2\alpha}},\]
	where a straightforward estimate has been carried out in the final step. 
\end{proof}

Theorem~\ref{thm:approxmain} (a) no longer yields the correct behavior of the approximation numbers in the more intricate case when $\vartheta = 0$. To prove Theorem~\ref{thm:angle2}, we will instead base our analysis on the change of variables formula from Lemma~\ref{lem:CoV} and on estimates of the restricted counting function. 

We begin our study with a geometric analysis of $\varphi_\alpha$. In this section, we mildly modify our notation by letting $S=(0,\infty)\times(-\pi/\log{2},\pi/\log{2})$.
\begin{lemma}\label{lem:anglebeta} 
	There is $\beta=\beta(c_0,\alpha)$, $\alpha<\beta<1$, such that
	\[\varphi_\alpha(S) \subseteq A_\beta.\]
\end{lemma}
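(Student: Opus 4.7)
My plan is to split $S$ according to $|s|$ and handle each regime with a different tool. Since $\varphi_\alpha \in \mathscr{G}_{\geq 1}$ already furnishes $\varphi_\alpha(S) \subseteq \mathbb{C}_0$, i.e., $|\arg \varphi_\alpha(s)| < \pi/2$ pointwise, the real task is to upgrade this to a uniform bound $|\arg \varphi_\alpha(s)| \leq \beta \pi/2$ with $\beta < 1$. Near $s=0$ the two summands $c_0 s$ and $\Phi_\alpha(2^{-s})$ are comparable in size but have different arguments, so I would use a careful Taylor expansion there; away from the origin $|\Phi_\alpha(2^{-s})|$ is bounded below, and a crude sectorial estimate suffices.

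For the first regime, the identity $(1 - 2^{-s})/(1 + 2^{-s}) = \tanh(s \log 2/2) = (s\log 2/2)(1 + O(s^2))$ yields
\[
\varphi_\alpha(s) = c_0 s + (s \log 2 / 2)^\alpha + O(s^{\alpha + 2}) = s^\alpha \bigl[(\log 2/2)^\alpha + c_0 s^{1-\alpha} + O(s^2)\bigr].
\]
The bracket converges to the positive constant $(\log 2/2)^\alpha$ uniformly in $\arg s \in [-\pi/2, \pi/2]$ as $s \to 0$, so I can pick $r_0 > 0$ such that the argument of the bracket is at most $(1-\alpha)\pi/4$ on $\{|s|\leq r_0\}\cap S$. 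Combined with $\arg s^\alpha = \alpha \arg s$ and $|\arg s| \leq \pi/2$, this produces $|\arg \varphi_\alpha(s)| \leq \alpha \pi/2 + (1-\alpha)\pi/4 = (1+\alpha)\pi/4$ on this disc.

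For the second regime, $|\Phi_\alpha(2^{-s})|$ is continuous on $\overline{S}$ with values in $[0,+\infty]$, blows up at the corners $\pm i\pi/\log 2$, tends to $1$ as $\sigma \to \infty$, and vanishes only at $s=0$; hence $g_0 := \inf\{|\Phi_\alpha(2^{-s})| : s \in \overline{S},\, |s|\geq r_0\} > 0$. Using $\Phi_\alpha(2^{-s}) \in A_\alpha$ (so $\mre \Phi_\alpha(2^{-s}) \geq \cos(\alpha\pi/2)|\Phi_\alpha(2^{-s})|$ and $|\mim \Phi_\alpha(2^{-s})| \leq \sin(\alpha\pi/2)|\Phi_\alpha(2^{-s})|$), together with $\mre c_0 s \geq 0$ and $|\mim c_0 s| \leq c_0 \pi/\log 2$, one gets
\[
\frac{|\mim \varphi_\alpha(s)|}{\mre \varphi_\alpha(s)} \leq \tan(\alpha\pi/2) + \frac{c_0 \pi}{g_0 (\log 2)\cos(\alpha\pi/2)} =: M
\]
on $\{|s|\geq r_0\}\cap S$. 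Setting $\beta := \max\bigl((1+\alpha)/2,\,(2/\pi)\arctan M\bigr) \in (\alpha,1)$ then yields $\varphi_\alpha(S)\subseteq A_\beta$. The main obstacle is the neighborhood of the origin: the sectorial bound degenerates there because $|\Phi_\alpha(2^{-s})| \to 0$ while $|\arg s|$ can simultaneously approach $\pi/2$, and it is only by extracting the leading $s^\alpha$ factor, whose argument $\alpha \arg s$ is safely bounded by $\alpha\pi/2$, that $\arg \varphi_\alpha$ can be controlled uniformly.
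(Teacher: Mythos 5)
Your proof is correct, but it takes a genuinely different route from the paper's. The paper first invokes the maximum principle (implicitly a Phragm\'en--Lindel\"of argument, since $S$ is unbounded) to reduce the claim to showing $\varphi_\alpha(\partial S \setminus \{\pm i\pi/\log 2\}) \subseteq A_\beta$, and then verifies this by explicit computation: on the vertical segment $\{it : |t| < \pi/\log 2\}$ it computes $\mre \varphi_\alpha(it)$ and $\mim\varphi_\alpha(it)$ directly from the formula for $\Phi_\alpha$ and checks the ratio is bounded, while on the horizontal rays $\sigma \pm i\pi/\log 2$ it notes $\varphi_\alpha(s) \mp c_0 i\pi/\log 2 \in [K,\infty)$ with $K > 0$. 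Your argument instead works pointwise in the interior, splitting by $|s|$: near $s=0$ you extract the dominant factor $s^\alpha$ via the expansion $\Phi_\alpha(2^{-s}) = (s\log 2/2)^\alpha(1 + O(s^2))$ and control the argument of the residual bracket, while away from the origin you use the sectorial containment $\Phi_\alpha(2^{-s}) \in \overline{A_\alpha}$ plus a positive lower bound $g_0$ on $|\Phi_\alpha(2^{-s})|$ to absorb the bounded perturbation $c_0 s$. What each buys: the paper's boundary reduction is shorter and cleaner once the maximum-principle step is granted, but that step does require some care on the unbounded strip (the behaviour of $\arg\varphi_\alpha$ as $\sigma\to\infty$ is needed but not spelled out); your approach is entirely elementary and makes explicit the phenomenon the paper's Figure~\ref{fig:alphabeta} depicts, namely that the delicate region is a neighbourhood of $w=0$ where the two summands compete, and that the leading behaviour there is governed by $s^\alpha$. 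One cosmetic point: since $A_\beta$ is open, you should take $\beta$ strictly larger than $\max\bigl((1+\alpha)/2,\,(2/\pi)\arctan M\bigr)$ (still $<1$) to guarantee strict inclusion; the underlying estimates already have the necessary room, since $|\arg s| < \pi/2$ strictly on $S$ and $\mre(c_0 s) > 0$.
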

\begin{proof}
	In view of the maximum principle, it is sufficient to show that there is $\beta$ such that 
	\begin{equation}\label{eq:betachoice} 
		\varphi_\alpha(\partial S \setminus \{\pm i \pi/\log{2}\}) \subseteq A_\beta. 
	\end{equation}
	By explicit computation we have that 
	\begin{equation}\label{eq:phiacalc} 
		\begin{aligned}
			\mre{\varphi_\alpha(it)} &= \cos\left(\frac{\alpha \pi}{2}\right)\left|\tan\left(\frac{\log{2}}{2}t\right)\right|^\alpha, \\
			\mim{\varphi_\alpha(it)} &= c_0t+\operatorname{sign}(t)\sin\left(\frac{\alpha \pi}{2}\right) \left|\tan\left(\frac{\log{2}}{2}t\right)\right|^\alpha, 
		\end{aligned}
	\end{equation}
	for $|t| < \pi/\log{2}$. Hence there is $B < \infty$ such that $|\mim{\varphi_\alpha(t)}|/\mre{\varphi_\alpha(t)} \leq B$ for all such $t$, so that $\varphi_\alpha(i(-\pi/\log{2}, \pi/\log{2})) \subseteq A_{\beta_0}$ for $\beta_0=\frac{2}{\pi}\arctan(B)<1$. If $s = \sigma \pm i \pi/\log{2}$ for $\sigma > 0$, then
	\[\varphi_\alpha(s) = \pm c_0 i \pi/\log{2} + c_0 \sigma + \Phi_\alpha(-2^{-\sigma}),\]
	from which it is clear that $\varphi_{\alpha}(s) \mp c_0 i \pi/\log{2} \in [K, \infty)$, where
	\[K = \inf_{\sigma > 0} c_0 \sigma + \Phi_\alpha(-2^{-\sigma}) > 0.\]
	It follows that \eqref{eq:betachoice} holds if we choose $\beta \geq \beta_0$ sufficiently latge. 
\end{proof}

The next lemma is essentially a consequence of the fact that $\varphi_\alpha(S)$ looks very similar to $A_\alpha$ locally around $w = 0$. We refer to Figure~\ref{fig:alphabeta} for an illustration.

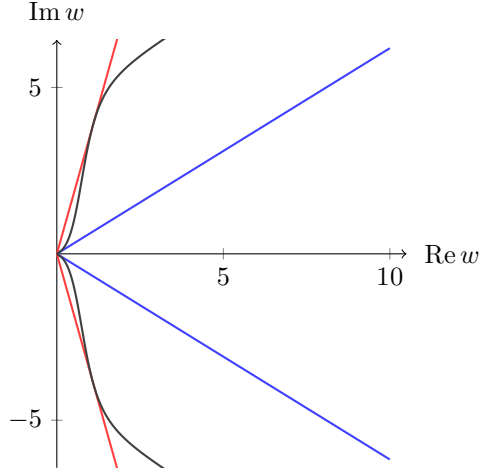
\begin{figure}
	\centering
	\begin{tikzpicture}
		\begin{axis}[
			axis equal image,
			xlabel= $\mre{w}$, 
			ylabel = $\mim{w}$,
			axis lines = center,
			xmin = 0,
			xmax = 10.5,
			ymin = -10*0.618-0.25,
			ymax = 10*0.618+0.25,
			xtick={5,10},
			ytick={-5,5},
			every axis x label/.style={
				at={(ticklabel* cs:1.025)},
				anchor=west,
			},
			every axis y label/.style={
				at={(ticklabel* cs:1.025)},
				anchor=south,
			},
			axis line style={->}]
			
			\pgfmathsetmacro{\alp}{0.35241638234}
			\pgfmathsetmacro{\bet}{0.8256615}
			\pgfmathsetmacro{\lims}{4.5323601418}
			\pgfmathsetmacro{\nlim}{10/cos(deg(\alp*pi/2))}
			
			\addplot [domain=0:\nlim, samples=10, blue!75, thick] ({cos(deg(\alp*pi/2))*x},{sin(deg(\alp*pi/2))*x});
			\addplot [domain=0:\nlim, samples=10, blue!75, thick] ({cos(deg(\alp*pi/2))*x},{-sin(deg(\alp*pi/2))*x});
			
			\addplot [domain=0:\nlim, samples=10, red!75, thick] ({cos(deg(\bet*pi/2))*x},{sin(deg(\bet*pi/2))*x});
			\addplot [domain=0:\nlim, samples=10, red!75, thick] ({cos(deg(\bet*pi/2))*x},{-sin(deg(\bet*pi/2))*x});
			
			\addplot [domain=0:\lims, samples=250, black!75, thick] ({cos(deg(\alp*pi/2))*(abs(tan(deg(x*ln(2)/2)))^(\alp)},{x+sign(x)*sin(deg(\alp*pi/2))*(abs(tan(deg(x*ln(2)/2)))^(\alp)});
			
			\addplot [domain=0:\lims, samples=250, black!75, thick] ({cos(deg(\alp*pi/2))*(abs(tan(deg(x*ln(2)/2)))^(\alp)},{-x-sign(x)*sin(deg(\alp*pi/2))*(abs(tan(deg(x*ln(2)/2)))^(\alp)});
			
		\end{axis} 
	\end{tikzpicture}
	\caption{The corner of the domains {\color{blue} $A_\alpha$}, $\varphi_\alpha(S)$, and {\color{red}$A_{\beta}$}, for $c_0=1$, $\alpha = \frac{2}{\pi}\arctan \big( \frac{\sqrt{5}-1}{2}\big)$, and a numerical choice of $\beta$.}
	\label{fig:alphabeta}
\end{figure}

\begin{lemma}\label{lem:anglecounting} 
	There is a constant $\varrho=\varrho(c_0,\alpha) >0$ such that 
	\begin{equation}\label{eq:Nphiaasymp} 
		\mathscr{N}_{\varphi_\alpha}(w) \asymp |w|^{\frac{1}{\alpha} - 1} \dist\left(w, \partial \varphi_\alpha(\mathbb{C}_0 \cap \mathbb{D}_{\varrho})\right) 
	\end{equation}
	for every $w \in \varphi_\alpha(\mathbb{C}_0 \cap \mathbb{D}_{\varrho/2})$, where $\mathbb{D}_\varrho= \mathbb{D}(0, \rho)$. Moreover, 
	\begin{enumerate}
		\item[(i)] there is a constant $\eta_1>0$ such that if $0<\beta_1<\alpha$ is fixed, then
		\[\mathscr{N}_{\varphi_\alpha}(w) \gg (\mre{w})^{\frac{1}{\alpha}}\]
		for every $w \in A_{\beta_1} \cap \{0<\mre{w}<\eta_1\}$. 
		\item[(ii)] there are constants $\eta_2>0$ and $\alpha<\beta_2<1$ such that $\varphi_\alpha(S) \subseteq A_{\beta_2}$ and
		\[\mathscr{N}_{\varphi_\alpha}(w) \ll (\mre{w})^{\frac{1}{\alpha}}\]
		for every $w \in A_{\beta_2} \cap \{0<\mre{w}<\eta_2\}$. 
	\end{enumerate}
\end{lemma}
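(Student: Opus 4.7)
The plan is to analyze $\varphi_\alpha$ near $s=0$ via Taylor expansion. Writing $(1-2^{-s})/(1+2^{-s}) = (s\log 2/2)\, g(s)$ for an analytic $g$ with $g(0)=1$, and using $c_0 s = o(s^\alpha)$ since $\alpha<1$, I would first derive
\[
\varphi_\alpha(s) = \Big(\frac{s \log 2}{2}\Big)^\alpha\bigl(1+E(s)\bigr), \qquad |E(s)|\ll |s|^{1-\alpha},
\]
uniformly for $s \in \overline{\mathbb{C}_0}\cap \mathbb{D}_{\varrho}$ with $\varrho$ sufficiently small. Under the coordinate change $u=(s\log 2/2)^\alpha$ this becomes a perturbation of the identity, so $\varphi_\alpha$ is injective on $\overline{\mathbb{C}_0}\cap \mathbb{D}_{\varrho}$, with local inverse
\[
\varphi_\alpha^{-1}(w) = \frac{2}{\log 2}\,w^{1/\alpha}\bigl(1+F(w)\bigr), \qquad F(w)\to 0 \text{ as } w\to 0,
\]
satisfying $|(\varphi_\alpha^{-1})'(w)| \asymp |w|^{1/\alpha - 1}$.

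I would next confine the preimages of small $w$ relevant to $\mathscr{N}_{\varphi_\alpha}(w)$ to $\mathbb{D}_\varrho$. For $s \in \mathbb{C}_0$, $\mre \Phi_\alpha(2^{-s}) > 0$, so $\varphi_\alpha$ has no zeros in $\overline{S}$ other than $s=0$; and at the corners $\pm i\pi/\log 2$ of $\overline{S}$, $\Phi_\alpha(2^{-s})$ blows up because $2^{-s}\to -1$. A compactness argument therefore forces any preimage $s \in \overline{S}$ with $\mre s > 0$ of a $w$ with $|w|$ sufficiently small to lie in $\mathbb{C}_0 \cap \mathbb{D}_\varrho$, and uniqueness follows from injectivity, giving $\mathscr{N}_{\varphi_\alpha}(w) = \mre s_0$ for $s_0 = \varphi_\alpha^{-1}(w)$. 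The boundary $\partial\varphi_\alpha(\mathbb{C}_0\cap \mathbb{D}_\varrho)$ near $0$ is the image of $i(-\varrho,\varrho)$, corresponding to $\{\mre s = 0\}$ in the $s$-plane, so $\mre s_0 = \dist(s_0, i\mathbb{R})$. Transferring this to the $w$-plane by integrating $\varphi_\alpha'$ along the segment from $s_0$ to $i\mim s_0$, and $(\varphi_\alpha^{-1})'$ along the segment from $w$ to its nearest boundary point, and splitting each integral according to whether $|\mim s_0|$ exceeds $\mre s_0$, then yields
\[
\mre s_0 \asymp |w|^{1/\alpha - 1}\dist(w,\partial\varphi_\alpha(\mathbb{C}_0\cap\mathbb{D}_\varrho)),
\]
which is \eqref{eq:Nphiaasymp}.

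Items (i) and (ii) then follow by elementary geometry. For (i), the computation \eqref{eq:phiacalc} shows $\mim\varphi_\alpha(it)/\mre\varphi_\alpha(it) = \tan(\alpha\pi/2) + C t^{1-\alpha}$ with $C>0$ for small $t>0$, placing the boundary curve of $\varphi_\alpha(\mathbb{C}_0\cap\mathbb{D}_\varrho)$ strictly outside $A_\alpha$ near $0$. Hence $\dist(w,\partial\varphi_\alpha(\mathbb{C}_0\cap\mathbb{D}_\varrho))\geq \dist(w,\partial A_\alpha) \asymp |w|$ for $w\in A_{\beta_1}$ with $\beta_1<\alpha$, and \eqref{eq:Nphiaasymp} yields $\mathscr{N}_{\varphi_\alpha}(w)\gg |w|^{1/\alpha}\geq (\mre w)^{1/\alpha}$. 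For (ii), I would take $\beta_2=\beta$ from Lemma~\ref{lem:anglebeta}; combining the trivial bound $\dist(w,\partial\varphi_\alpha(\mathbb{C}_0\cap\mathbb{D}_\varrho))\leq|w|$ with \eqref{eq:Nphiaasymp} gives $\mathscr{N}_{\varphi_\alpha}(w)\ll |w|^{1/\alpha}\asymp (\mre w)^{1/\alpha}$ for $w\in A_{\beta_2}$ (since $\beta_2<1$), the case $w\notin\varphi_\alpha(S)$ being trivial.

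The main obstacle will be the two-sided comparison in \eqref{eq:Nphiaasymp}. Since $\varphi_\alpha$ is not conformal at $0$, the derivative $|\varphi_\alpha'(s)|\asymp|s|^{\alpha-1}$ blows up near the origin, so the straight-line segment from $s_0$ to $i\mim s_0$ may pass arbitrarily close to $0$, making the naive mean value inequality too weak. A careful case split into the regimes $|\mim s_0| \leq \mre s_0$ and $|\mim s_0| \geq \mre s_0$ is required to control the integrals in both directions.
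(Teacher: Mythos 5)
Your argument is correct and arrives at \eqref{eq:Nphiaasymp} by a genuinely different route than the paper. The paper verifies (via the reparametrization $\widetilde t = \operatorname{sign}(t)|\tan(\tfrac{\log 2}{2}t)|^\alpha$) that $\partial\varphi_\alpha(\mathbb{C}_0\cap\mathbb{D}_\varrho)$ consists of two $C^{1,\min(1/\alpha-1,1)}$ arcs meeting at angle $\alpha\pi$ at the origin, and then invokes a Kellogg--Warschawski theorem adapted to corners (\cite[Thm.~3.9]{Pommerenke92}) to read off the two-sided comparison directly. You instead prove the same distortion estimate by hand: Taylor-expanding $\varphi_\alpha$ to get the perturbed-power-law form, deriving $|(\varphi_\alpha^{-1})'(w)|\asymp|w|^{1/\alpha-1}$ and $|\varphi_\alpha'(s)|\asymp|s|^{\alpha-1}$, and then integrating along segments. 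The paper's route is shorter because it delegates the entire distortion analysis to a known boundary-regularity theorem; yours is self-contained and makes the geometry explicit, but demands exactly the case split you identify, since $|\varphi_\alpha'|$ blows up at the corner. A small point worth noting: integrating $(\varphi_\alpha^{-1})'$ toward the nearest boundary point is the \emph{easy} direction, since $1/\alpha-1>0$ means the integrand stays bounded near $0$; it is the other integral, of $\varphi_\alpha'$ along the segment from $s_0$ to $i\mim s_0$, that requires the split at $|\mim s_0| \lessgtr \mre s_0$, which you correctly anticipate. Your treatments of (i) and (ii) match the paper's, though for (ii) you should make explicit that for $\eta_2$ small enough, every $w\in A_{\beta_2}\cap\varphi_\alpha(S)$ with $\mre w<\eta_2$ lies in $\varphi_\alpha(\mathbb{C}_0\cap\mathbb{D}_{\varrho/2})$ (which your earlier confinement argument supplies), so that \eqref{eq:Nphiaasymp} actually applies.
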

\begin{proof}
	Clearly, $\Phi_\alpha(2^{-s})$ is locally univalent around $s = 0$ (with $\mre s > 0$). Since
	\[\lim_{\mathbb{C}_0 \ni s \to 0} \frac{s}{\Phi_\alpha(2^{-s})} = 0,\]
	it follows from Rouch\'e's theorem that there is $\widetilde{\varrho} > 0$ such that $\varphi_\alpha$ is univalent on $\mathbb{C}_0 \cap \mathbb{D}_{\widetilde{\varrho}}$. Since $\varphi_\alpha(s) \to 0$ as $\mathbb{C}_0 \ni s \to 0$, we can also deduce that if $\delta > 0$ is sufficiently small, then for any $s \in \mathbb{C}_0 \setminus \mathbb{D}_{\widetilde{\varrho}}$,
	\[(c_0s + A_\alpha) \cap \varphi_\alpha(\mathbb{C}_0 \cap \mathbb{D}_{\delta \widetilde{\varrho}}) = \emptyset.\]
	Letting $\varrho = \delta \widetilde{\varrho}$, we therefore see that there is a uniquely defined inverse
	\[\varphi_\alpha^{-1} \colon \varphi_\alpha(\mathbb{C}_0 \cap \mathbb{D}_\varrho) \to \mathbb{C}_0 \cap \mathbb{D}_\varrho.\]
	Reparametrizing \eqref{eq:phiacalc} by letting
	\[\widetilde{t} = \operatorname{sign}(t) \left|\tan\left(\frac{\log{2}}{2}t\right)\right|^\alpha,\]
	we see that in a small neighbourhood of $w = 0$, $\partial \varphi_\alpha(\mathbb{C}_0 \cap \mathbb{D}_\varrho)$ consists of two $C^{1,\min( 1/\alpha - 1, 1)}$-smooth arcs making the angle $\alpha \pi$ at $w = 0$. Applying a Kellogg--Warschawski theorem adapted to corners \cite[Thm.~3.9]{Pommerenke92}, we thus conclude that
	\[\mathscr{N}_{\varphi_\alpha}(w) = \mre \varphi_\alpha^{-1}(w) \asymp \dist(\varphi_\alpha^{-1}(w), \mathbb{C}_0 \cap \mathbb{D}_{\varrho}) \asymp |w|^{\frac{1}{\alpha} - 1} \dist\left(w, \partial \varphi_\alpha(\mathbb{C}_0 \cap \mathbb{D}_{\varrho})\right)\]
	for every $w \in \varphi_\alpha(\mathbb{C}_0 \cap \mathbb{D}_{\varrho/2})$. Note here that we have made the restriction that $\varphi_\alpha^{-1}(w) \in \mathbb{C}_0 \cap \mathbb{D}_{\varrho/2}$ in order to ensure the validity of the first equivalence. This establishes \eqref{eq:Nphiaasymp}.
	
	We now turn to the proof of (i). From \eqref{eq:phiacalc} we have that
	\[\frac{|\mim{\varphi_\alpha(it)}|}{\mre{\varphi_\alpha(it)}} = \tan\left(\frac{\alpha \pi}{2}\right) + \frac{c_0 |t|}{\cos\left(\frac{\alpha \pi}{2}\right)\left|\tan\left(\frac{\log{2}}{2}t\right)\right|^\alpha} \geq \tan\left(\frac{\alpha \pi}{2}\right)\]
	for $-\pi/\log{2} < t < \pi/\log{2}$. Therefore, for sufficiently small $\eta_1 > 0$, we see by the maximum principle that
	\[A_\alpha \cap \{0<\mre{w}<\eta_1\} \subseteq \varphi_\alpha(\mathbb{C}_0 \cap \mathbb{D}_\varrho)\]
	and that if $w \in A_\alpha \cap \{0<\mre{w}<\eta_1\}$, then
	\[\dist\left(w, \partial\varphi_\alpha(\mathbb{C}_0 \cap \mathbb{D}_{\varrho})\right) = \dist\left(w, \partial\varphi_\alpha(\mathbb{C}_0)\right).\]
	Then, if $w \in A_{\beta_1} \cap \{0<\mre{w}<\eta_1\}$ for some fixed constant $0<\beta_1 < \alpha$, we find that
	\[\dist\left(w, \partial\varphi_\alpha(\mathbb{C}_0 \cap \mathbb{D}_{\varrho})\right) \geq \dist\left(w, \partial A_\alpha\right) \asymp \mre{w},\]
	where the implied constant depends on $\beta_1$. Combined with \eqref{eq:Nphiaasymp}, we conclude that $\mathscr{N}_{\varphi_\alpha}(w) \gg (\mre{w})^{\frac{1}{\alpha}}$ for $w \in A_{\beta_1} \cap \{0<\mre{w}<\eta_1\}$.
	
	The proof of (ii) is similar. We let $\alpha < \beta_2 < 1$ be as in Lemma~\ref{lem:anglebeta}, so that $\varphi_\alpha(S) \subseteq A_{\beta_2}$. If $\eta_2 > 0$ is sufficiently small and and if $w \in A_{\beta_2} \cap \{0<\mre{w}<\eta_2\}$, then
	\[\mathscr{N}_{\varphi_\alpha}(w) \ll |w|^{\frac{1}{\alpha} - 1} \dist\left(w, \partial A_{\beta_2} \right) \ll (\mre{w})^{\frac{1}{\alpha}}. \qedhere\]
\end{proof}
\begin{proof}
	[Proof of Theorem~\ref{thm:angle2}: Upper estimate] Let $f(s)=\sum_{m\geq1} b_m m^{-s}$ be any function in $\mathscr{H}^2$. Our goal is to establish that 
	\begin{equation}\label{eq:angle2upgoal} 
		\|\mathscr{C}_{\varphi_\alpha} f\|_{\mathscr{H}^2}^2 - |b_1|^2 \ll \sum_{m=2}^\infty \frac{|b_m|^2}{(\log{m})^{1/\alpha-1}}, 
	\end{equation}
	which will yield the stated upper bound, seeing as
	\[a_n(\mathscr{C}_{\varphi_\alpha}) = \sqrt{a_n(\mathscr{C}_{\varphi_\alpha}^\ast \mathscr{C}_{\varphi_\alpha})}.\]
	By Lemma~\ref{lem:Csum}, it is sufficient to establish \eqref{eq:angle2upgoal} for $f_j$ in $\mathscr{H}^2_j$ satisfying $f_j(+\infty)=0$, as long as we do so uniformly for every $j \in \mathbb{O}$.
	
	To estimate $\|\mathscr{C}_{\varphi_\alpha} f_j\|_{\mathscr{H}^2}^2$ we use the change of variables formula from Lemma~\ref{lem:CoV} and Lemma~\ref{lem:anglecounting} (ii). Let $\eta_2$ be as in the latter result and split the integral from Lemma~\ref{lem:CoV} at $\mre{w}=\eta_2$ to obtain
	\[I_1 = \int_{\mre{w}<\eta_2} |f_j'(w)|^2 \mathscr{N}_{\varphi_\alpha}(w)\,dw \qquad\text{and}\qquad I_2 = \int_{\eta_2 \leq \mre{w}} |f_j'(w)|^2 \mathscr{N}_{\varphi_\alpha}(w)\,dw.\]
	We begin with $I_1$. We appeal to Lemma~\ref{lem:anglecounting} (ii), then extend the integral in $\sigma$ from $(0,\eta_2)$ to $(0,\infty)$ to see that 
	\begin{equation}\label{eq:I1} 
		I_1 \ll \int_0^\infty \int_{|t|\leq \sigma \tan(\beta_2 \pi/2)} |f_j'(\sigma+it)|^2 \, dt \, \sigma^{1/\alpha}\,d\sigma 
	\end{equation}
	Expanding and using the triangle inequality, we have
	\[|f_j'(\sigma+it)|^2 \leq \sum_{m=2}^\infty \sum_{n=2}^\infty |b_m| |b_n| (\log{m})(\log{n}) (mn)^{-\sigma},\]
	where of course $b_m=0$ unless $m$ is of the form $m = j2^k$. Inserting this into \eqref{eq:I1} and computing the resulting Gamma-integral yields 
	\begin{equation}\label{eq:I1sum} 
		I_1 \ll \sum_{m=2}^\infty \sum_{n=2}^\infty \frac{|b_m| |b_n| (\log{m})(\log{n})}{(\log{m}+\log{n})^{1/\alpha+2}}. 
	\end{equation}
	By the estimate $\sqrt{xy}\leq (x+y)$ for $x,y>0$, 
	\begin{equation}\label{eq:amgm} 
		\frac{|b_m| |b_n| (\log{m})(\log{n})}{(\log{m}+\log{n})^{1/\alpha+2}} \leq \frac{|b_m|}{(\log{m})^{(1/\alpha-1)/2}}\frac{|b_n|}{(\log{n})^{(1/\alpha-1)/2}} \frac{1}{\log{m}+\log{n}}. 
	\end{equation}
	Writing $m=j 2^k \geq2$ and $n=j 2^l \geq2$, 
	\begin{equation}\label{eq:logmn} 
		\frac{1}{\log{m}+\log{n}} \leq \frac{1}{\log{2}} 
		\begin{cases}
			(k+l)^{-1} & \text{if } j=1,\\
			(1+k+l)^{-1} & \text{if } j \in \mathbb{O}\setminus\{1\}. 
		\end{cases}
	\end{equation}
	Note that if $j=1$, we are only summing over $k,l\geq1$, while we need to consider $k,l\geq0$ if $j \in \mathbb{O}\setminus\{1\}$. In either case, we insert \eqref{eq:amgm} and \eqref{eq:logmn} into \eqref{eq:I1sum} and appeal to Hilbert's inequality to conclude that
	\[I_1 \ll \sum_{m=2}^\infty \frac{|b_m|^2}{(\log{m})^{1/\alpha-1}}.\]
	The integral $I_2$ is easier to estimate. We can for instance use the coarse upper bound $\mathscr{N}_{\varphi_\alpha}(w) \leq N_{\varphi_\alpha}(w) \leq \mre{w}/c_0$ from \eqref{eq:NLest} and argue as above to see that
	\[I_2 \leq \frac{2\tan(\beta_2 \pi/2)}{c_0}\sum_{m=2}^\infty \sum_{n=2}^\infty |b_m| |b_n| (\log{m})(\log{n}) \int_{\eta_2}^\infty (mn)^{-\sigma} \sigma^2\,d\sigma.\]
	Thus $I_2$ clearly satisfies the same upper bound as $I_1$ (up to a constant), since $\eta_2>0$. 
\end{proof}
\begin{proof}
	[Proof of Theorem~\ref{thm:angle2}: Lower estimate] By Lemma~\ref{lem:Csum}, it is sufficient to establish that 
	\begin{equation}\label{eq:angle2lowergoal} 
		\|\mathscr{C}_{{\varphi_\alpha},j}\| \gg (\log{j})^{\frac{\alpha-1}{2\alpha}} 
	\end{equation}
	for $j\in\mathbb{O}\setminus\{1\}$. For such $j$ we define
	\[f_j(s) = \sum_{k=0}^\infty \frac{1}{\log(j2^k)} (j2^k)^{-s},\]
	a function in $\mathscr{H}^2_j$ which satisfies that $\|f_j\|_{\mathscr{H}^2}^2 \asymp (\log{j})^{-1}$ as $j\to\infty$. Let $\eta_1$ be as in Lemma~\ref{lem:anglecounting} (i) and let $0<\beta_1<\alpha$ be a fixed constant, to be further specified later. Using Lemma~\ref{lem:CoV} and Lemma~\ref{lem:anglecounting} (i), we find that
	\[\|\mathscr{C}_{\varphi_\alpha} f_j\|_{\mathscr{H}^2}^2 \gg \int_{A_{\beta_1} \cap \{0<\mre{w}<\eta_1\}} |f_j'(w)|^2 (\mre{w})^{1/\alpha}\,dw.\]
	Expanding $|f_j'(w)|^2$, we obtain 
	\begin{equation}\label{eq:fjexpand} 
		\|\mathscr{C}_{\varphi_\alpha} f_j\|_{\mathscr{H}^2}^2 \gg \sum_{k=0}^\infty \sum_{l=0}^\infty \int_0^{\eta_1} \int_{|t|\leq \sigma \tan(\beta_1 \pi /2)} 2^{-i(k-l)t}\,dt \, (j^2 2^{k+l})^{-\sigma} \sigma^{1/\alpha}\,d\sigma. 
	\end{equation}
	With $B=\tan(\beta_1 \pi /2)$, we have the estimate
	\[\frac{1}{2\sigma B }\int_{|t|\leq \sigma B} 2^{-i(k-l)t}\,dt = \sinc\left(\sigma B (\log{2}) (k-l)\right) \geq 1 - \frac{(\sigma B (\log{2}) (k-l))^2}{6}.\]
	We insert this estimate into the double integral in \eqref{eq:fjexpand}, then use the substitution $\sigma = x/\log(j^2 2^{k+l})$ and that $(\log{2})(k-l)/\log(j^2 2^{k+l})\leq1$ to obtain
	\[\|\mathscr{C}_{\varphi_\alpha} f_j\|_{\mathscr{H}^2}^2 \gg \frac{2B}{\big(\log(j^2 2^{k+l})\big)^{1/\alpha+2}} \int_0^{\eta_1 \log(j^2 2^{k+l})} e^{-x} x^{1/\alpha+1} \left(1 - \frac{B^2}{6}x^2\right)\,dx.\]
	By choosing $0<\beta_1<\alpha$ sufficiently small, we can make $B$ as small as we wish. In particular, we can ensure that
	\[\int_0^{\eta_1 \log{9}} e^{-x} x^{1/\alpha+1}\,dx \geq \frac{B^2}{12} \int_0^\infty e^{-x} x^{1/\alpha+3}\,dx,\]
	giving us that
	\[\|\mathscr{C}_{\varphi_\alpha} f_j\|_{\mathscr{H}^2}^2 \gg \sum_{k=0}^\infty \sum_{l=0}^\infty\frac{1}{\big(\log(j^2 2^{k+l})\big)^{1/\alpha+2}} \asymp (\log{j})^{-1/\alpha}.\]
	Hence $\|\mathscr{C}_{{\varphi_\alpha}} f_j\|_{\mathscr{H}^2}^2/\|f_j\|_{\mathscr{H}^2}^2 \asymp (\log{j})^{1-1/\alpha}$, completing the proof of \eqref{eq:angle2lowergoal}. 
\end{proof}

\bibliographystyle{amsplain-nodash} 
\bibliography{orthogonalcomp} 
\end{document}